\newcolumntype{M}[1]{>{\centering\arraybackslash}m{#1}} 
\DeclareFontFamily{OMS}{rsfs}{\skewchar\font'60}
\DeclareFontShape{OMS}{rsfs}{m}{n}{<-5>rsfs5 <5-7>rsfs7 <7->rsfs10 }{}
\DeclareSymbolFont{rsfs}{OMS}{rsfs}{m}{n}
\DeclareSymbolFontAlphabet{\scr}{rsfs}
\DeclareSymbolFontAlphabet{\scr}{rsfs}
\newcommand\cD{{\mathcal D}}
\newcommand\cE{{\mathcal E}}
\newcommand\cF{{\mathcal F}}
\newcommand\cH{{\mathcal H}}
\newcommand\cL{{\mathcal L}}
\newcommand\cR{{\mathcal R}}
\newcommand\cT{{\mathcal T}}
\newcommand{\tcF}{\tilde{\cF}}
\newcommand{\id}{{\rm id}}
\newcommand{\Aut}{\rm Aut}
\theoremstyle{plain}
\newtheorem{thm}{Theorem}[section]
\newtheorem{lemma}[thm]{Lemma}
\newtheorem{prop}[thm]{Proposition}
\newtheorem{cor}[thm]{Corollary}
\newtheorem{defn}[thm]{Definition}
\theoremstyle{definition}
\newtheorem{example}[thm]{Example}
\newtheorem{remark}[thm]{Remark}
\newcommand{\btheorem}{\begin{thm}}
	\newcommand{\etheorem}{\end{thm}}
\newcommand{\bproposition}{\begin{prop}}
	\newcommand{\eproposition}{\end{prop}}
\newcommand{\bdefinition}{\begin{defn}}
	\newcommand{\edefinition}{\end{defn}}
\newcommand{\bcorollary}{\begin{cor}}
	\newcommand{\ecorollary}{\end{cor}}
\newcommand{\bproof}{\begin{proof}}
	\newcommand{\eproof}{\end{proof}}
\newcommand{\bremark}{\begin{remark}}
	\newcommand{\eremark}{\end{remark}}
\newcommand{\eexample}{\end{example}}
\newcommand{\bexample}{\begin{example}}
\newcommand{\elemma}{\end{lemma}}
\newcommand{\blemma}{\begin{lemma}}
\newcommand{\sq}{\sqrt{-1}}
\newcommand{\p}{\partial}
\renewcommand{\bar}{\overline}
\newcommand{\eps}{\varepsilon}
\newcommand{\beq}{\begin{equation}}
\newcommand{\eeq}{\end{equation}}
\newcommand{\ee}{\end{eqnarray*}}
\newcommand{\be}{\begin{eqnarray*}}
\renewcommand{\tilde}{\widetilde}
\renewcommand{\>}{\rightarrow}
\newcommand{\ddc}{{\rm dd^c}}
\newcommand{\CC}{\mathbb {C}}
\newcommand{\EE}{{\mathbb E}}
\newcommand{\FF}{{\mathbb F}}
\newcommand{\GG}{{\mathbb G}}
\newcommand{\HH}{{\mathbb H}}
\newcommand{\II}{{\mathbb I}}
\newcommand{\JJ}{{\mathbb J}}
\newcommand{\MM}{{\mathbb M}}
\newcommand{\NN}{{\mathbb N}}
\newcommand{\RR}{{\mathbb R}}
\newcommand{\TT}{{\mathbb T}}
\newcommand{\db}{\overline{\partial}}
\newcommand{\ddbar}{\sqrt{-1} \partial \overline{\partial}}
\newcommand{\tr}{\mathrm{tr}}
\newcommand{\Ric}{\mathrm{Ric}}
\renewcommand{\d}{\mathrm{d}}
\newcommand{\vv}{\mathrm v}
\newcommand{\ww}{\mathrm w}
\newcommand{\ext}{\mathrm{ext}}
\newcommand{\Ent}{\mathrm{Ent}}
\newcommand{\Psh}{\mathrm{Psh}}
\newcommand{\Vol}{\mathrm{Vol}}
\newcommand{\aut}{\mathrm{aut}}
\numberwithin{equation}{section} 
\title{On the existence of weighted-cscK metrics}
\author{Jiyuan Han and Yaxiong Liu}
\address{Jiyuan Han, Institute for Theoretical Sciences,
Westlake University,
No.600 Dunyu Road, 
Hangzhou, 310030, China}
\email{hanjiyuan@westlake.edu.cn}
\address{Yaxiong Liu, Department of Mathematics,
University of Maryland,
4176 Campus Dr,
College Park, MD 20742, USA}
\email{yxliu238@umd.edu}
\begin{document}

\begin{abstract}
    In this paper, we prove that on a smooth K\"ahler manifold, the $\GG$-coercivity of the weighted Mabuchi functional implies the existence of the $(\vv,\ww)$-weighted-cscK (extremal) metric with $\vv$ log-concave (firstly studied in \cite{Lah19}), e.g, extremal metrics, K\"ahler--Ricci solitons, $\mu$-cscK metrics. 
\end{abstract}

\dedicatory{Dedicated to Professor Akito Futaki for his 70th Birthday}

	\maketitle
	
\small{\setcounter{tocdepth}{2} \tableofcontents
		\dottedcontents{section}[0.8cm]{}{1.8em}{1555pt}
		\dottedcontents{subsection}[2.0cm]{}{3em}{1555pt}
}

\section{Introduction}
In this paper, we apply the variational approach and continuity method to study the weighted-cscK (extremal) metrics, the study of which is initiated in papers \cite{Lah19, Lah23, AJL23}.
The weighted-cscK (extremal) metric is a natural generalization of the cscK (extremal) metric. The (generalized) Yau--Tian--Donaldson conjecture predicts an equivalence between the existence of cscK metric(s) and the $\GG$-uniform K-stability condition. By the work of many authors (see an incomplete list: \cite{Tian97, DR17, BDL20, CC21a, CC21b, Li22a, BJ23}), much progress has been made, and it is known that the existence of cscK metric(s) is equivalent to the $\GG$-coercivity of the Mabuchi functional, and can be implied by the $\GG$-uniformly K-stability for models.

The category of weighted-cscK metrics includes a large class of important metrics, such as K\"ahler--Einstein metrics, cscK metrics, K\"ahler--Ricci solitons, Mabuchi solitons, $\mu$-cscK metrics, etc. Indeed, the log-concavity assumption of our main theorem \ref{main-thm} is satisfied in all these examples. It provides a universal framework to treat these different canonical metrics. This viewpoint has some interesting applications, e.g, in \cite{Li21}, the author finds the Ricci-flat K\"ahler cone metric on an irregular cone by transforming it (within the weighted-cscK category) to a $g$-soliton on a quasi-regular cone.
Because of its importance, it is attempting to generalize the Yau--Tian--Donaldson type theory to the weighted-cscK case which would give an algebraic criterion for the resolution of the equation. 

Let $(X, \omega)$ be a compact K\"ahler manifold.
Let $T$ be an $\ell$-dimensional real torus generated by Hamiltonian vector fields of $(X, \omega)$. 
The complexified torus $T_\CC$ acts holomorphically on $(X,\omega)$. 
The Lie algebra $\mathfrak{t} := \mathrm{Lie} (T)$ is identified with $N_\RR \simeq \mathbb{R}^\ell$.
Similarly, we identify the dual $\mathfrak{t}^*$ with $M_\RR \simeq \mathbb{R}^\ell$ with the corresponding dual real coordinates, which will be denote by 
$y:=(y_1,\cdots,y_{\ell})$. 

Let $\{\xi^{\alpha}\}$ be a standard basis of $\mathfrak{t}$, and let 
$m^{\xi^\alpha}_\phi\in C^{\infty}(X)$ be the associated Hamiltonian function for a K\"ahler form $\omega_{\phi}\in [\omega]$. 
We have
\begin{equation*}
	\iota_{\xi^{\alpha}}\omega_\phi
	=\dfrac{\sqrt{-1}}{2\pi}\bar{\partial} m^{\xi^\alpha}_\phi.
\end{equation*}
By \cite{Ati82}, the image of 
 $m_{\phi}:X\rightarrow \mathfrak{t}^*=\mathbb{R}^\ell$
is a convex polytope $P$, which is called the \textit{associated moment polytope}. It is explicitly given by
\begin{equation*}
	m_{\phi}(x)
	:=(m_\phi^{\xi^1}(x),\cdots,m_\phi^{\xi^\ell}(x)), \quad
	P:=m_{\phi}(X).
\end{equation*}

We use $\alpha$, $\beta$ to represent the index of the Lie algebra $\mathfrak{t}$, and $i,j$ to represent the index of base manifold $X$. 
Let $y=(y_\alpha)\in P$ denote for the coordinate. For any smooth function $\vv$ on $P$, we will always use $\vv_{,\alpha}$ to denote the derivative of $\vv$ over $y_\alpha$.

\begin{defn} 
    We define the Lie algebra
    \begin{align*}
        {{\aut}}_T(X) = \{\xi\in {{\aut}(X)}: [\xi, c]=0, \forall c\in \mathfrak{t}_\CC\}.
    \end{align*}
    Let ${\Aut}_T(X)$ be the connected subgroup of $\Aut(X)$ that is generated by ${\aut}_T(X)$. 
    Let $\GG = K_\CC \subset {\Aut}_T(X)$ be a reductive Lie group, and $K$ contains a maximal torus of ${\Aut}_T(X)$.
    Denote $\TT$ as the center of $\GG$. Note that, since $T$ is a subgroup of the center of ${\Aut}_T(X)$ and $K$ contains a maximal torus, we have $T\subset K$.
\end{defn}

In \cite{Lah19}, the author considers the following weighted-cscK equation
\begin{align}
    \label{weighted-csck}
    {S_{\vv}(\phi)} = {\ww(m_\phi)}\cdot \ell_{\rm ext}(m_\phi),
\end{align}
where  $\vv,\ww\in C^\infty(P,\RR_{>0})$, $\ell_{\ext}$ is an affine function on $P$, and the weighted-scalar curvature is given by
\begin{align*}
    S_\vv(\phi)
:=\vv(m_\phi)S(\phi)-2\vv_{,\alpha}(m_\phi)\Delta_\phi(m_\phi^{\xi^\alpha})-\vv_{,\alpha\beta}(m_\phi)\langle\xi^\alpha,\xi^\beta\rangle_\phi.
\end{align*}
Note that when $\vv=\ww=const$, the solution of \eqref{weighted-csck} is the extremal metric.

Equation \eqref{weighted-csck} can be equivalently written as a system of second-order PDEs:
 \begin{subnumcases}{}
		\vv(m_{\phi})(\omega+\ddc \phi)^n
		=e^{F}\omega^n, 
		\label{couple system1 weighted cscK intro} \\
		\left(\Delta_{\phi}-\frac{\vv_{,\alpha}(m_{\phi})}{\vv(m_{\phi})} J\xi^\alpha\right)F
		=-\ell_{\ext}(m_{\phi})\frac{\ww(m_{\phi})}{\vv(m_{\phi})} 
		+\frac{\tr_{\vv,\phi}\Ric(\omega)}{\vv(m_\phi)}, 
		\label{couple system2 weighted cscK intro}
	\end{subnumcases}
 where for a $(1,1)$-form $\eta$, 
$\tr_{\vv,\phi}(\eta)$ is defined by
\begin{equation*}
\tr_{\vv,\phi}(\eta)
:=\vv(m_\phi)\tr_{\phi}\eta+\langle\d\vv(m_\phi),m_\eta\rangle,
\end{equation*}
where the definition of $\langle\d\vv(m_\phi),m_\eta\rangle$ can be found in Definition \ref{weighted_fn_def}.

From the variational point of view, the equation \eqref{weighted-csck} can also be considered as the Euler-Lagrangian of the weighted Mabuchi functional $\MM_{\vv,\ww\cdot\ell_{\ext}}$ (see Definition \ref{def of weighted Mabuchi}). 
In \cite{Lah23}, the author shows that the weighted-cscK metric is a minimizer of $\MM_{\vv,\ww\cdot\ell_{\ext}}$ and such metric is unique up to automorphism. 
It is proved in \cite{AJL23} that the existence of the solution of \eqref{weighted-csck} implies the $\GG$-coercivity of $\MM_{\vv,\ww\cdot\ell_{\ext}}$. 
In this article, we aim to prove the inverse direction and conclude the following theorem:
\begin{thm}
    \label{main-thm}
    Assume $\vv$ is log-concave.
    The following statements are equivalent: \\
    (1). Up to the automorphism group ${\Aut}_T(X)$, there exists a unique solution of the weighted-cscK (extremal) equation \eqref{weighted-csck}.\\
    (2). The weighted Mabuchi functional for weighted-cscK (extremal) equation is $\mathbb{G}$-coercive, i.e, 
    \begin{align*}
        \MM_{\vv,\ww\cdot\ell_{\ext}}(\phi) \geq \delta \cdot \JJ_{\vv, \TT}(\phi) - C,
    \end{align*}
    for some constants $\delta>0$, $C>0$, and any $\phi\in\cE^1_{K}(X, \omega)$.
\end{thm}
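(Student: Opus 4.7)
The implication (1) $\Rightarrow$ (2), together with the uniqueness part of (1), is already contained in \cite{AJL23} and \cite{Lah23}, so what remains is to produce a solution of \eqref{weighted-csck} under the coercivity assumption (2). The plan is to combine the variational picture with a continuity method, in the spirit of the Chen--Cheng strategy for cscK metrics and its extensions, adapted to the weighted coupled system \eqref{couple system1 weighted cscK intro}--\eqref{couple system2 weighted cscK intro}. Concretely, I would introduce a path of weighted-cscK type equations parametrized by $t \in [0,1]$, starting at $t=0$ from a weighted prescribed-volume (Calabi--Yau type) problem, which is solvable once $\vv>0$ is smooth because \eqref{couple system1 weighted cscK intro} then reduces to a standard Monge--Amp\`ere equation, and ending at $t=1$ with \eqref{weighted-csck} itself, and then show that the set of $t$ for which a smooth solution exists is both open and closed in $[0,1]$.

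Openness at any $t_0$ where a smooth solution exists would follow from linearizing the coupled system: the linearization is a weighted Lichnerowicz-type fourth order operator whose kernel is identified with the holomorphy potentials for $\mathfrak{t}_\CC$ inside ${\aut}_T(X)$. Working in H\"older spaces on a slice transverse to the reductive action of $\GG$ turns this operator into an isomorphism and the implicit function theorem applies. The coercivity of $\MM_{\vv,\ww\cdot\ell_{\ext}}$ plays no role for openness; it is used only for closedness.

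The main obstacle, as usual in this circle of problems, is closedness, which requires uniform a priori estimates along the path. The plan has three layers. First, the $\GG$-coercivity in (2), together with monotonicity of $\MM_{\vv,\ww\cdot\ell_{\ext}}$ along minimizing sequences on the path, yields, after moving $\phi_t$ into the right $K$-orbit, a uniform bound on $\JJ_{\vv,\TT}(\phi_t)$ and thence on the weighted entropy of $\phi_t$. Second, these energy bounds must be upgraded to uniform $L^\infty$ bounds on $\phi_t$ and on the Ricci-type potential $F_t$ appearing in \eqref{couple system1 weighted cscK intro}--\eqref{couple system2 weighted cscK intro}. This is the delicate step, and it is precisely here that the log-concavity hypothesis on $\vv$ enters: the drift coefficient $\vv_{,\alpha}/\vv$ is the gradient of the concave function $\log \vv$ along the moment image, so a suitably chosen auxiliary function for the maximum principle, or equivalently a Moser iteration applied to the weighted linear equation satisfied by $F_t$, should give a uniform $\|F_t\|_{L^\infty}$ bound in terms of entropy. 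Once $F_t$ is bounded, \eqref{couple system1 weighted cscK intro} becomes a Monge--Amp\`ere equation with bounded density, and Ko\l odziej/$\alpha$-invariant arguments produce $\|\phi_t\|_{L^\infty}\le C$. Third, Chen--Cheng-type second order estimates for the weighted Laplacian, followed by Evans--Krylov and Schauder theory, bootstrap this to uniform $C^{k,\alpha}$ bounds. Sending $t \to 1$ then produces the desired weighted-cscK metric, and the hardest part to implement carefully will be the simultaneous $L^\infty$ bound on $(\phi_t,F_t)$ for the coupled system, since the weighted drift in \eqref{couple system2 weighted cscK intro} is what forces the log-concavity assumption and must be absorbed cleanly into the test function.
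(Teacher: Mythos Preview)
Your overall strategy—continuity method plus a priori estimates, with the $\GG$-coercivity feeding an entropy bound—matches the paper in spirit, but several concrete steps diverge from or misidentify what the paper actually does, and at least two of these are genuine gaps.

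\textbf{The continuity path.} The paper does not start from a Calabi--Yau type equation. It uses the weighted analogue of Chen's twisted path \eqref{Chen-continuous path}, where the $t=0$ equation is the $J$-type equation $\tr_{\vv,\phi}(\theta)/\ww(m_\phi)=\underline{\theta}$, solved by $\phi=0$ after choosing $\theta$ so that this holds for the reference metric. Your proposed Calabi--Yau starting point is not obviously connected to \eqref{weighted-csck} by a path for which the closedness machinery applies, and you would need to explain what the coupled system looks like along such a path.

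\textbf{The automorphism pullback and the function $f_*$.} You write ``after moving $\phi_t$ into the right $K$-orbit'' as if this were cosmetic. It is not. Pulling back $\varphi_{t_i}$ by $\sigma_i\in\TT$ produces $\phi_i$ with bounded $\JJ_\vv$, but the equation that $\phi_i$ satisfies is no longer \eqref{couple system1}--\eqref{couple system2}: the form $\theta$ gets replaced by $\sigma_i^*\theta = \theta+\ddc u_i$, and the rescaled potential $f_i=\tfrac{1-t_i}{t_i}u_i$ appears as an extra $(\tfrac{1-t_i}{t_i})\theta$-psh term in the modified system \eqref{modified couple system1}--\eqref{modified couple system2}. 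All of the a priori estimates in Section~\ref{Apriori estimates} are proved for this modified system, and they depend crucially on the fact that $\int_X e^{-pf_i}\omega^n$ is uniformly bounded for large $p$ once $t_i$ is close to $1$ (Proposition~\ref{integrability of f_*}, via the $\alpha$-invariant). Without tracking $f_*$ through the $C^0$, $W^{2,p}$, and gradient estimates, the argument does not close.

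\textbf{Where log-concavity enters.} You place the log-concavity of $\vv$ in the $L^\infty$ bound for $F$. In the paper it is the opposite: the $C^0$-estimate (Theorem~\ref{C^0-estimate}) is proved by a Guo--Phong auxiliary-equation argument that requires only the entropy bound and $\int_X e^{-pf_*}\omega^n<C$, with no concavity hypothesis. Log-concavity is used \emph{only} in the $W^{2,p}$-estimate (Theorem~\ref{integral C^2}), where it ensures that the Hessian term $(\vv_{,\alpha\beta}/\vv - \vv_{,\alpha}\vv_{,\beta}/\vv^2)\langle\xi^\alpha,\xi^\beta\rangle_\phi$ has a sign after integrating by parts against the weighted measure $e^{2a\log\vv}\omega_\phi^n$; see \eqref{log-concavity} and \eqref{ineq reduced by log-concave}. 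The gradient estimate (Theorem~\ref{gradient of F}) then uses only the outputs of the previous two steps. So the ``hardest part'' is not the $L^\infty$ bound on $F$ but the integral second-order estimate, and the mechanism by which $\log\vv$ is absorbed is an integration-by-parts trick with the weight $\vv^{2a}$, not a maximum principle.
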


To show (2) implies (1), we consider the following weighted version of the continuity path of Chen \cite{Ch18} for weighted extremal metrics,
\begin{align}
\label{Chen-continuous path}
t\left(\frac{S_{\vv}(\phi)}{\ww(m_\phi)}-\ell_{\rm ext}(m_\phi)\right)-(1-t)\left(\frac{\tr_{\vv,\phi}(\theta)}{\ww(m_\phi)}-\underline{\theta}\right)=0,
\end{align}
where $\theta$ is a $K$-invariant smooth positive $(1,1)$-form in the same class of $\omega$ and $\underline{\theta}$ is defined by \eqref{const theta_lower_bar}, which is a constant only depending on $[\omega]$ by \cite[Lemma 2]{Lah19}.
\begin{remark}
    The idea of the continuity path \eqref{Chen-continuous path} can be seen from the variational picture. Since $-\Ric(\omega)$ may not be positive, replacing a small part of the term $-\EE_\vv^\Ric$ in the weighted Mabuchi functional by $\EE_\vv^\theta$ for some positive (1,1)-form $\theta$ will make the weighted Mabuchi functional more coercive. Intuitively, this makes \eqref{Chen-continuous path} always solvable for $t\in [0,1)$. 
    
    Such continuity method is used in \cite{CC21a,CC21b} for the resolution of the cscK problem. The idea of using continuity path to solve Monge-Amp\`ere type equations dates back to the fundamental work of \cite{Yau78}.
\end{remark}

Similar with \cite{CC21a}, the equation \eqref{Chen-continuous path} can be re-written as a coupled PDE system \begin{subnumcases}{}
		\vv(m_\phi)\omega_\phi^n
		=e^F\omega^n, 
		\label{couple system1 intro} \\
		\left(\Delta_\phi-\frac{\vv_{,\alpha}(m_\phi)}{\vv(m_\phi)} J\xi^\alpha\right)F
		=-\ell_{\ext}(m_\phi)\frac{\ww(m_\phi)}{\vv(m_\phi)}
		+(1-\frac{1}{t})\frac{\tr_{\vv,\phi}(\theta)}{\vv(m_\phi)}\nonumber \\
		\qquad\qquad\qquad\qquad\qquad\qquad 
		-(1-\frac{1}{t})\underline{\theta}\frac{\ww(m_\phi)}{\vv(m_\phi)} +\frac{\tr_{\vv,\phi}\Ric(\omega)}{\vv(m_\phi)}. 
		\label{couple system2 intro}
\end{subnumcases}
Compared with the equations in \cite{CC21a}, the first equation \eqref{couple system1 intro} is a weighted Monge-Amp\`ere equation, and the second linear equation \eqref{couple system2 intro} also contains first-order derivative term of $F$ (which is a third-order derivative of $\phi$). 
This makes the $C^0$-estimate and $C^2$-estimate of $\phi$, the gradient estimate of $F$ more challenging.

We prove a uniform $C^0$-estimate by modifying Guo-Phong's argument in \cite{guo2022}.
In addition, in the calculation of the $C^2$-estimate of $\phi$ and the gradient estimate of $F$, we need to modify Chen-Cheng's method in \cite{CC21b} (in particular, we need to choose new test functions and integral weights in order to deal with the third-order derivative terms (of $\phi$)).

\subsection*{Organization}
In Section \ref{Preliminaries}, we recall some backgrounds on weighted-cscK metrics and re-write the weighted-cscK equation to a coupled PDE system.

In Section \ref{Apriori estimates}, we consider a modified coupled PDE system and prove the uniform $C^0$ estimate  and $W^{2,p}$-estimate  for $\phi$, and the gradient estimate for $F+f_*-\log(\vv(m_\phi))$. Essentially, we only need the log-concavity assumption in the proof of $W^{2,p}$-estimate.

In Section \ref{Proof of Continuity Method}, we prove Theorem \ref{main-thm} by using the continuity method. We give a sketched proof of the openness part in Subsection \ref{Openness}, where we follow the argument in \cite{Ch18}, \cite{Has19}. 
We prove the closedness part in Subsection \ref{Closedness}, 
where we follow the strategy of \cite{CC21b}.

In Section \ref{Appendix} (Appendix), we provide $C^2$-estimate and higher-order estimates for the weighted-cscK equation.
In addition, we give a detailed computation of the linearized operators used in the openness part.

~\\
{\bf Relation with Di-Nezza--Jubert--Lahdili's work}

As our project was nearing completion, we were informed that Eleonora Di-Nezza, Simon Jubert and Abdellah Lahdili also proved the a priori estimates in \cite{DNJL24} by using a method quite different from ours. Our two groups finished our projects independently.

\subsection*{Acknowledgement} 
The authors would like to thank Abdellah Lahdili for sharing his idea on showing the openness near the starting point of the continuity path around 2020. 
The authors would like to thank Eleonora Di Nezza, Simon Jubert, Abdellah Lahdili for pointing out an error (which causes a problem in $W^{2,p}$ estimate if we don't assume the log-concavity of $\vv$) in the first version of our draft.
The authors would like to thank Jianchun Chu, Lifan Guan, Chang Li for their helpful discussions.
The authors would like to thank Prof. Gang Tian, Akito Futaki, Chi Li, Eiji Inoue, Kewei Zhang for their interest and helpful comments. 
The authors would like to thank the anonymous referees for many helpful comments.
J. Han is supported by National Key R\&D Program of China 2023YFA1009900, NSFC No.12301059, XHD23A0101.

\section{Preliminaries}
\label{Preliminaries}

In this section, we will collect the foundations of the variational approach and continuity method that are needed in further analysis. In particular, we will review the weighted Monge-Amp\`ere measure $\vv(m_\phi)\omega_\phi^n$, $\cE^1$-space and $\d_1$-distance, the functionals $\MM_{\vv,\ww}, \JJ_\vv, \EE_\vv$,  coercivity, weighted-cscK (extremal) equations and continuity path.

\subsection{Weighted Monge-Amp\`ere measures, space of potentials, and functionals}
\label{Weighted Monge-Ampere measures, space of potentials, and functionals}
The variational approach has successful applications in the resolution of K\"ahler-Einstein problem, see \cite{Tian97, BBGZ13, BBEGZ19, BBJ21}.
In this subsection, we will define the notions needed in the variational approach. 
We will follow the lines in \cite{BWN14, HL20,AJL23}.

We will consider $K$-invariant $\omega$-psh functions:
\begin{align}
     \Psh_{K}(X,\omega) = \{\phi\in \Psh(X,\omega): \phi \text{ is invariant under the } K\text{-action}\}.
\end{align}

In \cite{BWN14}, the authors define the weighted Monge-Amp\`ere measure by using Kiselman's partial Legendre transform and approximation by series of step functions. Then in \cite{HL20}, the authors present an alternative definition by using a more direct fibration construction, which is more adapted in defining the non-Archimedean functionals.

\begin{prop}[\cite{BWN14, HL20}]
    Let $\vv$ be a smooth positive function on the moment polytope $P$. For any $\omega$-psh function $\phi\in \Psh_K(X,\omega)$, the weighted Monge-Amp\`ere measure ${\rm MA}_\vv(\phi) := \vv(m_\phi)\omega_\phi^n$ is well-defined as a Radon measure.
\end{prop}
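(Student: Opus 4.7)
The plan is to start from the case of smooth $K$-invariant potentials (where everything is classical) and extend to all of $\Psh_K(X,\omega)$ by approximation, using either the Kiselman partial Legendre transform construction of \cite{BWN14} or the fibration / disintegration along the moment map of \cite{HL20} to pass rigorously to the limit. For smooth $K$-invariant $\phi$, the moment map $m_\phi \colon X \to P$ is smooth with image exactly $P$ (by Atiyah), and $\vv(m_\phi)\omega_\phi^n$ is a smooth positive measure of total mass $\int_P \vv \, d\mu_{DH}$, the $\vv$-weighted Duistermaat--Heckman mass, depending only on $[\omega]$ and $\vv$. The key linearity observation is that the moment-map components $m_\phi^{\xi^\alpha} = m_0^{\xi^\alpha} + \d^c\phi(\xi^\alpha)$ are first-order linear in $\phi$, so the only obstruction to defining $\vv(m_\phi)$ is the nonlinearity of $\vv$.

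Next, given $\phi \in \Psh_K(X,\omega)$, I would approximate $\phi$ by a decreasing sequence of smooth $K$-invariant quasi-psh functions $\phi_j$ via a $K$-equivariant Demailly regularization (averaging a standard regularization over the compact group $K$). For each $\phi_j$, the measure $\vv(m_{\phi_j})\omega_{\phi_j}^n$ is a classical smooth measure. Either the Kiselman partial Legendre transform of \cite{BWN14} (writing $\vv$ as an integral of step functions, for which the weighted measure is expressible via Bedford--Taylor mixed Monge--Amp\`ere currents of $\phi_j$) or the fibration construction of \cite{HL20} (expressing the measure as a weak $P$-integral of standard reduced Monge--Amp\`ere measures weighted by $\vv$) then allows passage to the limit. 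Bedford--Taylor weak convergence of mixed Monge--Amp\`ere currents for decreasing $\omega$-psh sequences, extended via the truncation/envelope machinery of Guedj--Zeriahi to non-pluripolar products for unbounded $\phi$, combined with the uniform boundedness of $\vv$ on the compact polytope $P$, yields weak convergence of $\vv(m_{\phi_j})\omega_{\phi_j}^n$. A diagonal argument shows the limit is independent of the approximating sequence and defines $\mathrm{MA}_\vv(\phi)$.

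The main obstacle is giving precise meaning to $\vv(m_\phi)$ when $\phi$ is not bounded, since $m_\phi^{\xi^\alpha}$ is then only a distribution and $\vv$ is a nonlinear function of it. The BWN14/HL20 machinery bypasses this by never literally evaluating $\vv(m_\phi)$ pointwise: the measure is constructed directly from mixed Monge--Amp\`ere products of $\phi$ in a way that coincides with the naive smooth expression when $\phi$ is smooth. Boundedness of $\vv$ away from $0$ and $\infty$ on the compact polytope $P$ makes $\mathrm{MA}_\vv(\phi)$ comparable to the ordinary non-pluripolar product $\omega_\phi^n$ and guarantees that the total mass remains $\int_P \vv \, d\mu_{DH}$, hence the Radon property; uniqueness of the limit (independence of the regularization) is then standard in pluripotential theory.
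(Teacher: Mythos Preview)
The paper does not prove this proposition; it is stated with attribution to \cite{BWN14, HL20}, and the preceding paragraph merely summarizes the two constructions you invoke (Kiselman's partial Legendre transform with step-function approximation in \cite{BWN14}, and the fibration/disintegration along the moment map in \cite{HL20}). Your sketch accurately reflects both of these approaches and is consistent with what the paper cites.

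One small correction: your final claim that the total mass of $\mathrm{MA}_\vv(\phi)$ is always $\int_P \vv\,d\mu_{DH}$ is not right for arbitrary $\phi\in\Psh_K(X,\omega)$. Full mass characterizes the subclass $\cE_K(X,\omega)$; for general unbounded $\phi$ the non-pluripolar weighted Monge--Amp\`ere measure can have strictly smaller mass. This does not affect the Radon property (the mass is still finite and bounded above by the weighted DH mass, since $\vv$ is bounded on $P$ and the unweighted non-pluripolar product has mass at most $\int_X\omega^n$), but you should not claim mass preservation in this generality.
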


It should be noted that such weighted Monge--Amp\`ere measure is understood in the sense of non-pluripolar product.
The following weighted functionals and potential spaces are defined in \cite{BWN14, HL20, Lah19, AJL23}.
\begin{defn}
\label{weighted_fn_def}
    For $\phi\in \Psh_K(X,\omega)$, $\phi = \phi_0 + u$, $\phi_t = \phi_0 + t u$ ($0\leq t\leq 1$), we define the weighted energy functionals:
    \begin{align}
        \EE_\vv(\phi,\phi_0) = \frac{1}{\mathbb{V}_\vv(X)} \int_0^1 \int_X (\phi-\phi_0) \vv(m_{\phi_t}) \frac{\omega_{\phi_t}^n}{n!} \d t,
    \end{align}
    \begin{align}
        \EE_\vv^\eta(\phi,\phi_0) 
        = \frac{1}{\mathbb{V}_\vv(X)} \int_0^1 \int_X (\phi-\phi_0) \Big( \vv(m_{\phi_t}) \eta\wedge \frac{\omega_{\phi_t}^{n-1}}{(n-1)!} + \langle \d\vv(m_{\phi_t}),m_\eta \rangle \frac{\omega_{\phi_t}^n}{n!} \Big)\d t,
    \end{align}
    where $\eta$ is a smooth $(1,1)$-form, which can be locally expressed as $\eta=\ddc \psi$. And $m_\eta(\xi) := -J\xi(\psi)$, which is globally well-defined. And $\langle\d\vv(m_\phi),\cdot\rangle=\d\vv(m_\phi)^{\sharp} = \sum_\alpha \vv_{,\alpha} \xi^\alpha$ as a $\mathfrak{t}$-valued function on $X$, where $(\cdot)^{\sharp}$ is with respect to the K\"ahler form. Therefore, $\langle\d\vv(m_\phi),m_\eta\rangle=\sum_\alpha\vv_{,\alpha}\cdot \xi^\alpha(m_\eta)=-\sum_\alpha \vv_{,\alpha} J\xi^\alpha(\psi)$.
    
    We denote $\EE_\vv(\phi) = \EE_\vv(\phi,0)$. (The same convention applies for other energy functionals.)
    We define the weighted I-functional:
    \begin{align}
        \II_\vv(\phi,\phi_0) = \frac{1}{\mathbb{V}_\vv(X)} \int_X (\phi-\phi_0) \left( \vv(m_{\phi_0})\frac{\omega_{\phi_0}^n}{n!} - \vv(m_{\phi})\frac{\omega_{\phi}^n}{n!} \right),
    \end{align}
    and the weighted J-functional:
    \begin{align}
        &\JJ_\vv(\phi,\phi_0) \\
        =& \frac{1}{\mathbb{V}_\vv(X)} \int_X (\phi-\phi_0) \vv(m_{\phi_0})\frac{\omega_{\phi_0}^n}{n!} - \EE_\vv(\phi,\phi_0) 
        =: \Lambda_\vv(\phi,\phi_0) - \EE_\vv(\phi,\phi_0) \nonumber \\
        =& \int_X \sum_{k=0}^{n-1} \int_0^1 \int_0^t \vv(m_{\phi_s}) \frac{1}{k!(n-k-1)!}(1-s)^k s^{n-k-1} 
        \d s\d t 
         \cdot \d (\phi-\phi_0)\wedge \d^{\rm c} (\phi-\phi_0) \wedge \omega^k \wedge \omega_\phi^{n-k-1} \nonumber.
    \end{align}
    We also define the reduced weighted J-functional:
    \begin{align}
        \JJ_{\vv,\TT}(\phi) = \inf_{\sigma\in \TT} \JJ_{\vv}(\sigma^* \phi).
    \end{align}
\end{defn}
\begin{remark} There exists a constant $C$ which depends on $\vv$, such that
    \begin{align*}
        \frac{1}{C} \JJ(\phi) \leq \JJ_\vv(\phi) \leq C \JJ(\phi),
    \end{align*}
    and
    \begin{align*}
        \frac{1}{C} \big(\II(\phi)-\JJ(\phi)\big) \leq \II_\vv(\phi)-\JJ_\vv(\phi) \leq C \big( \II(\phi)-\JJ(\phi)\big),
    \end{align*}
    where $\II$, $\JJ$ are the classical $I$-functional and $J$-functional:
    \begin{align*}
    	\II(\phi):=\frac{1}{\mathbb{V}(X)}\int_X\phi\omega^n-\frac{1}{\mathbb{V}(X)}\int_X\phi\omega_\phi^n
    \end{align*}
    and
    \begin{align*}
    	\JJ(\phi):=\frac{1}{\mathbb{V}(X)}\int_X\phi\omega^n-E(\phi)
    	=\frac{1}{\mathbb{V}(X)}\int_X\phi\omega^n -\frac{1}{(n+1)\mathbb{V}(X)}\sum_{j=0}^n\int_X\phi\omega_\phi^j\wedge\omega^{n-j}.
    \end{align*}
\end{remark}

\begin{defn} We define the following spaces of potentials:
    \begin{align}
        \cE_K(X,\omega) = \{\phi\in \Psh_K(X,\omega): \int_X \vv(m_\phi) \frac{\omega_\phi^n}{n!} = \int_X \vv(m_\omega) \frac{\omega^n}{n!} =: \mathbb{V}_\vv(X)\},
    \end{align}
    \begin{align}
        \cE^1_K(X,\omega) = \{\phi\in \cE_K(X,\omega):  \EE_\vv(\phi)>-\infty\}.
    \end{align}
\end{defn}

\begin{defn}
\label{def of weighted Mabuchi}
    For any $\phi\in\cE^1_K(X,\omega)$,
    we define the (weighted) Mabuchi functional (for weighted-cscK problem) as:
    \begin{align*}
        \MM_{\vv,\ww}(\phi) 
        =& \int_X \log(\frac{\vv(m_\phi)\omega_\phi^n}{\omega^n}) \vv(m_\phi) \frac{\omega_\phi^n}{n!} - \int_X \log(\vv(m_{\omega}))\vv(m_{\omega}) \frac{\omega^n}{n!}\\
        & -\EE_\vv^{\Ric(\omega)}(\phi) + \EE_\ww(\phi).
    \end{align*}
    We also define the (weighted) Mabuchi for (weighted-extremal problem) as:
    \begin{align*}
        \MM_{\vv,\ww\cdot \ell_{\ext}}(\phi) 
        =& \int_X \log(\frac{\vv(m_\phi)\omega_\phi^n}{\omega^n}) \vv(m_\phi) \frac{\omega_\phi^n}{n!} - \int_X \log(\vv(m_{\omega}))\vv(m_{\omega}) \frac{\omega^n}{n!}\\
        & -\EE_\vv^{\Ric(\omega)}(\phi) + \EE_{\ww\cdot \ell_{\ext}}(\phi).
    \end{align*}
    We will denote
    \begin{align*}
        \MM_{\vv,\ww\cdot \ell_{\ext}}(\phi) 
        = \HH_\vv(\phi) -\EE_\vv^{\Ric(\omega)}(\phi) + \EE_{\ww\cdot \ell_{\ext}}(\phi) - C_0,
    \end{align*}
    where
    \begin{align*}
        \HH_\vv(\phi) = \int_X \log(\frac{\vv(m_\phi)\omega_\phi^n}{\omega^n}) \vv(m_\phi) \frac{\omega_\phi^n}{n!}, \;  C_0 = \int_X \log(\vv(m_{\omega}))\vv(m_{\omega}) \frac{\omega^n}{n!}.
    \end{align*}
\end{defn}

In \cite{Chen00, Dar15, BWN14, BB17, BDL17, Lah19, HL20,  Lah23}, the authors show that the space $\cE^1_{K}(X,\omega)$ is granted with a Finsler structure. The functionals have controlled behaviours with respect to such Finsler structure.
\begin{prop}
    Let $\phi_0,\phi_1$ be two $\omega$-psh functions in $\cE^1_{K}(X,\omega)$. Then there exists a unique geodesic segment $\phi_t$ $(0\leq t \leq 1)$ in $\cE^1_{K}(X,\omega)$ that connects $\phi_0$ and $\phi_1$. The space $\cE^1_{K}(X,\omega)$ is geodesically complete and associated with a distance $\d_1$. In addition, along any geodesic, $\EE_\vv$ is affine, $\JJ_\vv$, $\MM_{\vv,\ww}$ are convex.
\end{prop}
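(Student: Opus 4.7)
The plan is to follow the pluripotential-geometry template of Chen, Darvas, Berndtsson, and Berman--Berndtsson, adapting every step to the weighted measure $\vv(m_\phi)\omega_\phi^n$ as in Berman--Witt~Nystr\"om, Han--Li, and Lahdili. Because $\EE_\vv$ is comparable to its unweighted counterpart (by the remark immediately following the definitions) and because $K$-invariance interacts cleanly with the weight, the modifications are largely bookkeeping, provided one accepts a weighted Berndtsson-type convexity theorem for the entropy as input.

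\textbf{Existence, uniqueness, and the $\d_1$-metric.} First I would produce $\phi_t$ by solving the Dirichlet problem for the homogeneous complex Monge--Amp\`ere equation
\[
(\pi^*\omega+\ddc\Phi)^{n+1}=0 \quad \text{on } X\times A,
\]
where $A=\{1\le|\zeta|\le e\}\subset\CC$, with boundary data $\Phi|_{|\zeta|=1}=\phi_0$, $\Phi|_{|\zeta|=e}=\phi_1$, and then set $\phi_t(x)=\Phi(x,e^{-t})$. $K$-invariance of $\phi_t$ is preserved by uniqueness of the envelope together with $K$-invariance of $\phi_0,\phi_1$. To force the geodesic into $\cE^1_K$ I would approximate $\phi_0,\phi_1$ by bounded $K$-invariant potentials with uniformly bounded weighted energy and pass to the limit using comparability of $\EE_\vv$ with the unweighted $\EE$. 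I then define
\[
\d_1(\phi_0,\phi_1):=\int_0^1\frac{1}{\mathbb{V}_\vv(X)}\int_X|\dot\phi_t|\,\vv(m_{\phi_t})\frac{\omega_{\phi_t}^n}{n!}\,\d t;
\]
the integrand is $t$-independent along the weak geodesic, giving a Finsler structure bi-Lipschitz to the unweighted Darvas metric, so geodesic completeness is inherited from the classical setting.

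\textbf{Affinity of $\EE_\vv$ and convexity of $\JJ_\vv$.} To obtain affinity I would regularize by $\eps$-geodesics $\Phi_\eps$ satisfying $(\pi^*\omega+\ddc\Phi_\eps)^{n+1}=\eps\,\Omega$ for a fixed smooth volume form, producing smooth paths $\phi^\eps_t$; differentiating
\[
\frac{\d}{\d t}\EE_\vv(\phi^\eps_t)=\frac{1}{\mathbb{V}_\vv(X)}\int_X\dot\phi^\eps_t\,\vv(m_{\phi^\eps_t})\frac{(\omega_{\phi^\eps_t})^n}{n!}
\]
once more and integrating by parts in the spatial variables (using $T$-invariance to absorb derivatives of $\vv(m_{\phi^\eps_t})$) yields an expression of order $O(\eps)$ by the $\eps$-geodesic equation, and $\eps\to 0$ gives affinity along the weak geodesic. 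Since $\phi_t$ is itself convex in $t$ (the equation $(\pi^*\omega+\ddc\Phi)^{n+1}=0$ forces $\ddot\phi_t\ge|\nabla\dot\phi_t|^2_{\phi_t}\ge 0$), the linear-in-potential term $\Lambda_\vv(\phi_t)$ is convex in $t$, and hence $\JJ_\vv=\Lambda_\vv-\EE_\vv$ is convex along geodesics.

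\textbf{Convexity of $\MM_{\vv,\ww}$.} This is the main obstacle. I would use the decomposition $\MM_{\vv,\ww\cdot\ell_{\ext}}=\HH_\vv-\EE_\vv^{\Ric(\omega)}+\EE_{\ww\cdot\ell_{\ext}}-C_0$ and establish convexity of the entropy block $\HH_\vv-\EE_\vv^{\Ric(\omega)}$ along smooth subgeodesics via a weighted Berndtsson-type subharmonicity computation, then pass to weak geodesics by regularization. In that computation the log-concavity of $\vv$ enters exactly as the hypothesis making a curvature-type integrand pointwise nonnegative, following \cite{Lah19,Lah23}. The remaining contribution $\EE_{\ww\cdot\ell_{\ext}}$ is convex along the convex geodesic $\phi_t$ by direct differentiation of its defining integral, and summing the pieces yields convexity of $\MM_{\vv,\ww\cdot\ell_{\ext}}$.
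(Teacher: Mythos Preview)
The paper does not supply its own proof of this proposition; it is stated as background and attributed to the references listed immediately before it. Your outline is essentially a sketch of what those references do, and the overall architecture (HCMA geodesics on $X\times A$, $\eps$-regularization, a Darvas-type $\d_1$, Berndtsson-style subharmonicity for the entropy piece) is the standard one.

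There is, however, a genuine misconception in your treatment of the convexity of $\MM_{\vv,\ww}$. You assert that ``log-concavity of $\vv$ enters exactly as the hypothesis making a curvature-type integrand pointwise nonnegative.'' This is incorrect: the proposition is stated, and holds, for arbitrary smooth positive $\vv$, with no log-concavity hypothesis. Along a smooth geodesic the second variation of $\MM_{\vv,\ww}$ is $\int_X\vv(m_\phi)\,|\cD_\phi\dot\phi|^2\,\omega_\phi^n$, which is nonnegative simply because $\vv>0$; Lahdili's extension of the Berman--Berndtsson argument to weak geodesics in $\cE^1$ likewise requires only positivity of $\vv$. In the present paper log-concavity is invoked solely in the a priori $W^{2,p}$ estimate (Theorem~\ref{integral C^2}), not in the functional framework. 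A smaller slip: your justification for the convexity of $\EE_{\ww\cdot\ell_{\ext}}$ is off --- it is not convex ``because $\phi_t$ is convex in $t$,'' since its dependence on the potential is not linear; rather it is \emph{affine} along geodesics by the same second-variation computation you used for $\EE_\vv$, which works for any smooth weight on $P$ regardless of sign.
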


\begin{remark}
\label{v-top}
    Since $\vv$ has positive lower and upper bounds, the strong topology defined by $\EE_\vv$ is equivalent to the strong topology defined by $\EE$. And the entropies $\HH$, $\HH_\vv$ are comparable, i.e.,  there exists $C>0$, such that $\frac{1}{C} (\HH+1) \leq \HH_\vv \leq C (\HH+1)$.
\end{remark}

\begin{defn}[Coercivity]
    Let $\FF$ be a functional defined over the potential space $\cE^1_{K}(X,\omega)$.
    We say that $\FF$ is coercive if there exist constants $\delta>0, C$, such that for any $\phi \in \cE^1_{K}(X,\omega)$,
    \begin{align*}
        \FF(\phi) \geq \delta \cdot \JJ_{\vv}(\phi) - C.
    \end{align*}
    We say that $\FF$ is $\mathbb{G}$-coercive if there exist constants $\delta>0, C$, such that for any $\phi \in \cE^1_{K}(X,\omega)$,
    \begin{align*}
        \FF(\phi) \geq \delta \cdot \JJ_{\vv,\TT}(\phi) - C.
    \end{align*}
\end{defn}

\begin{remark}
    All the definitions and propositions of this subsection can be generalized to the setting of projective klt varieties. See for example \cite[Section 2]{HL20}.
\end{remark}

\subsection{Weighted-cscK equations}
\label{Weighted-cscK equations}

In this subsection, following the idea in \cite{CC21a}, we re-write weighted-cscK equation \eqref{weighted-csck} to a coupled PDE system, in which the first one is a $g$-Monge-Amp\`ere equation (as studied in \cite{BWN14}, \cite{HL20}) and the second one is a linear elliptic equation.
Compared with the cscK equation studied in \cite{CC21a}, one main difference is that, there are first order derivative terms appearing in the second equation.
In the following of this subsection, we will collect some computations that are frequently used in latter sections.

Let 
\begin{align*}
	F=\log\left(\frac{\vv(m_\phi)\omega_\phi^n}{\omega^n}\right) \quad
	i.e., \vv(m_\phi)\omega_\phi^n=e^F\omega^n.
\end{align*}
Then the linearization of L.H.S. is given by
\begin{align*}
	\left.\frac{\d}{\d t}\right|_{t=0}
	(\vv(m_{\phi_t})\omega_{\phi_t}^n)
	=&\vv_{,\alpha}(m_\phi)\left.\frac{\d}{\d t}\right|_{t=0}\langle m_{\phi+t\psi},\xi^\alpha\rangle \omega_\phi^n
	+\vv(m_\phi)\Delta_\phi\psi\omega_\phi^n \\
	=&\left(\frac{\vv_{,\alpha}(m_\phi)}{\vv(m_\phi)}\d^{\rm c}\psi(\xi^\alpha)+\Delta_\phi(\psi)\right)\vv(m_\phi)\omega_\phi^n \\
	=&\left(\Delta_\phi-\frac{\vv_{,\alpha}(m_\phi)}{\vv(m_\phi)} J\xi^\alpha\right)(\psi)\vv(m_\phi)\omega_\phi^n
\end{align*}
where $\phi_t=\phi+t\psi$ and $\Delta_\phi:=\tr_\phi\ddbar$, we have used \cite[Lemma 1]{Lah19}
\begin{align}
\label{moment_map_formula}
	m_{\phi}^\xi=m_\omega^\xi+t\d^{\rm c}\phi(\xi).
\end{align}
Note that
\begin{align}
\label{gradient of logv(m_phi)}
	|\nabla^\phi\log\vv(m_\phi)|_\phi^2
    =\frac{\vv_{,\alpha}(m_\phi)\vv_{,\beta}(m_{\phi})}{\vv(m_\phi)^2}\langle\xi^\beta,\xi^\alpha\rangle_\phi. 
\end{align}
We denote $m_\phi^\xi=\langle m_\phi,\xi\rangle$.

One has
\begin{align}
\label{Lap of logv(m_phi)}
\Delta_\phi\log\vv(m_\phi)
=&g_\phi^{i\bar j}\partial_i\partial_{\bar j}(\log\vv(m_\phi)) \nonumber  
=g_\phi^{i\bar j}\partial_i\frac{\partial_{\bar j}\vv(m_\phi)}{\vv(\phi)} \nonumber 
=g_\phi^{i\bar j}\partial_i\frac{\vv_{,\alpha}(m_\phi)\partial_{\bar j}m_\phi^{\xi^\alpha}}{\vv(\phi)} \nonumber \\
=&g_\phi^{i\bar j}\frac{\vv_{,\alpha}(m_\phi)}{\vv(m_\phi)}\partial_i\partial_{\bar j}m_\phi^{\xi^\alpha}
  +g_\phi^{i\bar j}\frac{\vv_{,\alpha\beta}(m_\phi)\partial_i m_\phi^{\xi^\beta}\partial_{\bar j}m_\phi^{\xi^\alpha}}{\vv(\phi)}
  -g_\phi^{i\bar j}\frac{\vv_{,\beta}(m_\phi)\partial_{i}m_\phi^{\xi^\alpha}\vv_{,\alpha}(m_\phi)\partial_{\bar j}m_\phi^{\xi^\alpha}}{\vv(\phi)^2} \nonumber   \\
=&
\frac{\vv_{,\alpha}(m_\phi)}{\vv(m_\phi)}\Delta_\phi(m^{\xi^\alpha}_\phi)
	+\frac{\vv_{,\alpha\beta}(m_\phi)}{\vv(m_\phi)}\langle\xi^\alpha,\xi^\beta\rangle_\phi
	-\frac{\vv_{,\alpha}(m_\phi)\vv_{,\beta}(m_\phi)}{\vv(m_\phi)^2}\langle\xi^\alpha,\xi^\beta\rangle_\phi
\end{align}
and
\begin{align}
\label{Jxi log volume}
-J\xi^\alpha\log\left(\frac{\omega_\phi^n}{\omega^n}\right)
=\d^{\rm c}\left(\log\left(\frac{\omega_\phi^n}{\omega^n}\right)\right)(\xi^\alpha)
=\Delta_\phi(m^{\xi^\alpha}_\phi)-
	\Delta_\omega(m^{\xi^\alpha}_\omega),
\end{align}
(see \cite[Lemma 5]{Lah19}).
Thus one obtains
\begin{align}
	-\frac{\vv_{,\alpha}(m_\phi)}{\vv(m_\phi)} J\xi^\alpha(F) 
	=&\frac{\vv_{,\alpha}(m_\phi)}{\vv(m_\phi)} \d^cF(\xi^\alpha) \nonumber \\
	=&\frac{\vv_{,\alpha}(m_\phi)\vv_{,\beta}(m_{\phi})}{\vv(m_\phi)^2}\langle\xi^\alpha,\xi^\beta\rangle_\phi
	+ \frac{\vv_{,\alpha}(m_\phi)}{\vv(m_\phi)} \Delta_\phi(m^{\xi^\alpha}_\phi)
	-\frac{\vv_{,\alpha}(m_\phi)}{\vv(m_\phi)} 
	\Delta_\omega(m^{\xi^\alpha}_\omega)
	\label{derivative of F}
\end{align}
Then we have
	\begin{align*}
	\left(\Delta_\phi-\frac{\vv_{,\alpha}(m_\phi)}{\vv(m_\phi)} J\xi^\alpha\right)F
	=&\Delta_\phi\log\vv(m_\phi)+\Delta_\phi\log\left(\frac{\omega_\phi^n}{\omega^n}\right)-\frac{\vv_{,\alpha}(m_\phi)}{\vv(m_\phi)} J\xi^\alpha(F)  \\
	=&2\frac{\vv_{,\alpha}(m_\phi)}{\vv(m_\phi)} \Delta_\phi(m^{\xi^\alpha}_\phi)
+\frac{\vv_{,\alpha\beta}(m_\phi)}{\vv(m_\phi)}\langle\xi^\alpha,\xi^\beta\rangle_\phi 
	-\frac{\vv_{,\alpha}(m_\phi)}{\vv(m_\phi)} 
	\Delta_\omega(m^{\xi^\alpha}_\omega) \\
	&-S(\phi)+\tr_\phi\Ric(\omega) \\
	=&-\frac{S_\vv(\phi)}{\vv(m_\phi)} -\frac{\vv_{,\alpha}(m_\phi)}{\vv(m_\phi)} 
	\Delta_\omega(m^{\xi^\alpha}_\omega) +\tr_\phi\Ric(\omega) \\
	=&-\frac{S_\vv(\phi)}{\vv(m_\phi)} 
	+\vv(m_\phi)^{-1}\Big(\langle\d\vv(m_\phi),m_{\Ric(\omega)}\rangle
	+\vv(m_\phi)\tr_\phi\Ric(\omega) \Big) \\
	=&-\frac{S_\vv(\phi)}{\vv(m_\phi)}+\frac{\tr_{\vv,\phi}\Ric(\omega)}{\vv(m_\phi)},
\end{align*}
where
\begin{align*}
	S_\vv(\phi)
	=\vv(m_\phi)S(\phi)-2\vv_{,\alpha}\Delta_\phi(m_\phi^{\xi^\alpha})-\vv_{,\alpha\beta}(m_\phi)\langle\xi^\alpha,\xi^\beta\rangle_\phi,
\end{align*}
$m_{\Ric(\omega)}=\Delta_\omega(m_\omega)$ 
and
\begin{align*}
	-\vv_{,\alpha}(m_\phi) 
	\Delta_\omega(m^{\xi^\alpha}_\omega)
	=\langle \d\vv(m_\phi),m_{\Ric(\omega)}\rangle
\end{align*}
by \cite[Lemma 5]{Lah19}. Indeed, locally we can write $\omega=\ddc\psi$, $\Ric(\omega)=-\ddc \log\det(\psi_{i\bar j})$, and Ricci potential is $-\log\det(\psi_{i\bar{j}})$. Then
$\Delta_\omega(m^{\xi^\alpha}_\omega) = g^{i\bar{j}}\partial_{i\bar{j}}(-J\xi^\alpha\psi) = J\xi^\alpha(-\log\det(\psi_{i\bar j})) = m_{\Ric(\omega)}$. 

The continuity path \eqref{Chen-continuous path} can be reformulated into the following coupled PDE system
\begin{subnumcases}{}
		\vv(m_\phi)\omega_\phi^n
		=e^F\omega^n, 
		\label{couple system1} \\
		\left(\Delta_\phi-\frac{\vv_{,\alpha}(m_\phi)}{\vv(m_\phi)} J\xi^\alpha\right)F
		=-\ell_{\ext}(m_\phi)\frac{\ww(m_\phi)}{\vv(m_\phi)}
		+\frac{\tr_{\vv,\phi}(\eta_t)}{\vv(m_\phi)}
		-(1-\frac{1}{t})\underline{\theta}\frac{\ww(m_\phi)}{\vv(m_\phi)}, 
		\label{couple system2}
\end{subnumcases}
where we denote $\eta_t:=(1-\frac{1}{t})\theta+\Ric(\omega)$.
By \eqref{derivative of F} and \eqref{couple system2},
\begin{align*}
	\Delta_\phi F
	=&-\ell_{\ext}(m_\phi)\frac{\ww(m_\phi)}{\vv(m_\phi)}
		+\frac{\tr_{\vv,\phi}(\eta_t)}{\vv(m_\phi)}
		-(1-\frac{1}{t})\underline{\theta}\frac{\ww(m_\phi)}{\vv(m_\phi)}  \\
	&-\frac{\vv_{,\alpha}(m_\phi)\vv_{,\beta}(m_{\phi})}{\vv(m_\phi)^2}\langle\xi^\beta,\xi^\alpha\rangle_\phi
	-\frac{\vv_{,\alpha}(m_\phi)}{\vv(m_\phi)}(\Delta_\phi m_\phi^{\xi^\alpha}-\Delta_\omega m_\omega^{\xi^\alpha}) 
\end{align*}
and 
\begin{align*}
	\Delta_\phi(F-\log\vv(m_\phi))
	=&-\ell_{\ext}(m_\phi)\frac{\ww(m_\phi)}{\vv(m_\phi)}
		+\frac{\tr_{\vv,\phi}(\eta_t)}{\vv(m_\phi)}
		-(1-\frac{1}{t})\underline{\theta}\frac{\ww(m_\phi)}{\vv(m_\phi)}  \\
	&-\frac{\vv_{,\alpha}(m_\phi)\vv_{,\beta}(m_{\phi})}{\vv(m_\phi)^2}\langle\xi^\beta,\xi^\alpha\rangle_\phi
	-\frac{\vv_{,\alpha}(m_\phi)}{\vv(m_\phi)}\Delta_\phi m_\phi^{\xi^\alpha}
	 +\frac{\vv_{,\alpha}(m_\phi)}{\vv(m_\phi)}\Delta_\omega m_\omega^{\xi^\alpha} \\
	&- \frac{\vv_{,\alpha}(m_\phi)}{\vv(m_\phi)}\Delta_\phi(m^{\xi^\alpha}_\phi)
	-\frac{\vv_{,\alpha\beta}(m_\phi)}{\vv(m_\phi)}\langle\xi^\alpha,\xi^\beta\rangle_\phi
	+\frac{\vv_{,\alpha}(m_\phi)\vv_{,\beta}(m_\phi)}{\vv(m_\phi)^2}\langle\xi^\alpha,\xi^\beta\rangle_\phi \\
	=&-\left((\ell_\ext+(1-\frac{1}{t})\underline{\theta})\frac{\ww}{\vv}
    -(1-\frac{1}{t})\frac{\langle\d\vv(m_\phi),m_\theta\rangle}{\vv} \right)+\tr_\phi\eta_t \\
    &
    -2\frac{\vv_{,\alpha}(m_\phi)}{\vv(m_\phi)}\Delta_\phi m_\phi^{\xi^\alpha}
	-\frac{\vv_{,\alpha\beta}(m_\phi)}{\vv(m_\phi)}\langle\xi^\alpha,\xi^\beta\rangle_\phi.
\end{align*}
Note that, by \eqref{derivative of F}, 
\begin{align}
	\frac{\vv_{,\alpha}(m_\phi)}{\vv(m_\phi)}J\xi^\alpha(F-\log\vv(m_\phi))
	=&\frac{\vv_{,\alpha}(m_\phi)}{\vv(m_\phi)}J\xi^\alpha(F)
	 -\frac{\vv_{,\alpha}(m_\phi)}{\vv(m_\phi)}J\xi^\alpha(\log\vv(m_\phi)) \nonumber \\
	=&\frac{\vv_{,\alpha}(m_\phi)}{\vv(m_\phi)}J\xi^\alpha(F)
	 -\frac{\vv_{,\alpha}(m_\phi)}{\vv(m_\phi)}\frac{J\xi^\alpha(\vv(m_\phi))}{\vv(m_\phi)} \nonumber \\
	=&\frac{\vv_{,\alpha}(m_\phi)}{\vv(m_\phi)}J\xi^\alpha(F)
	 -\frac{\vv_{,\alpha}(m_\phi)}{\vv(m_\phi)^2}\vv_{,\beta}(m_\phi)J\xi^\alpha(\langle m_\phi,\xi^\beta\rangle) \nonumber \\
	=&\frac{\vv_{,\alpha}(m_\phi)}{\vv(m_\phi)}J\xi^\alpha(F)
	 +\frac{\vv_{,\alpha}(m_\phi)\vv_{,\beta}(m_\phi)}{\vv(m_\phi)^2}\langle\xi^\alpha,\xi^\beta\rangle_\phi \nonumber \\
	=&-\frac{\vv_{,\alpha}(m_\phi)}{\vv(m_\phi)}\Delta_\phi(m_\phi^{\xi^\alpha})
	+\frac{\vv_{,\alpha}(m_\phi)}{\vv(m_\phi)}\Delta_\omega(m_\omega^{\xi^\alpha}).
	\label{Jxi(F')}
\end{align}
It follows that
\begin{align}
\label{Laplace of F'}
	\Delta_\phi(F-\log\vv(m_\phi))
	=&-f_t'+\tr_\phi\eta_t
    +2\frac{\vv_{,\alpha}(m_\phi)}{\vv(m_\phi)}J\xi^\alpha(F-\log\vv(m_\phi))
	-\frac{\vv_{,\alpha\beta}(m_\phi)}{\vv(m_\phi)}\langle\xi^\alpha,\xi^\beta\rangle_\phi,
\end{align}
where
\begin{align*}
    f_t':=(\ell_\ext+(1-\frac{1}{t})\underline{\theta})\frac{\ww}{\vv}
    -(1-\frac{1}{t})\frac{\langle\d\vv(m_\phi),m_\theta\rangle}{\vv}
    +2\frac{\vv_{,\alpha}(m_\phi)}{\vv(m_\phi)}\Delta_\omega(m_\omega^{\xi^\alpha}).
\end{align*}
In addition, by \eqref{Lap of logv(m_phi)} and \eqref{Jxi(F')}
\begin{align}
	\Delta_\phi\log\vv(m_\phi)
=&-\frac{\vv_{,\alpha}(m_\phi)}{\vv(m_\phi)}J\xi^\alpha(F-\log\vv(m_\phi)) +\frac{\vv_{,\alpha}(m_\phi)}{\vv(m_\phi)}\Delta_\omega(m_\omega^{\xi^\alpha}).   \nonumber \\
	&+\frac{\vv_{,\alpha\beta}(m_\phi)}{\vv(m_\phi)}\langle\xi^\alpha,\xi^\beta\rangle_\phi
	-\frac{\vv_{,\alpha}(m_\phi)\vv_{,\beta}(m_\phi)}{\vv(m_\phi)^2}\langle\xi^\alpha,\xi^\beta\rangle_\phi
	\label{Lap of logv(m_phi) and Jxi(F')}
\end{align}
One also has
\begin{align}
\label{Lap of F+logv(m_phi)}
	\Delta_\phi(F+\log\vv(m_\phi))
	=&-f_t+\tr_\phi\eta_t-2\frac{\vv_{,\alpha}(m_\phi)\vv_{,\beta}(m_{\phi})}{\vv(m_\phi)^2}\langle\xi^\beta,\xi^\alpha\rangle_\phi
	+\frac{\vv_{,\alpha\beta}(m_\phi)}{\vv(m_\phi)}\langle\xi^\alpha,\xi^\beta\rangle_\phi,
\end{align}
where
\begin{align*}
    f_t=(\ell_\ext+(1-\frac{1}{t})\underline{\theta})\frac{\ww}{\vv}
    -(1-\frac{1}{t})\frac{\langle\d\vv(m_\phi),m_\theta\rangle}{\vv}.
\end{align*}
We have
\begin{align}
\label{upper bound of xi pair}
	\left|2\frac{\vv_{,\alpha}(m_\phi)\vv_{,\beta}(m_{\phi})}{\vv(m_\phi)^2}-\frac{\vv_{,\alpha\beta}(m_\phi)}{\vv(m_\phi)}\right| \langle\xi^\alpha,\xi^\beta\rangle_\phi 
	\leq A_{\vv,\xi}'\langle\xi^\alpha,\xi^\beta\rangle_\phi 
	\leq A_{\vv,\xi} \tr\omega_\phi,
\end{align}
for some constant $A_{\vv,\xi}>0$ only depends on $\vv$ and $\{\xi^\alpha\}$.

In addition, if $\vv$ is log-concave, then
\begin{align}
\label{log-concavity}
    \left(\frac{\vv_{,\alpha\beta}}{\vv}-\frac{\vv_{,\alpha}\vv_{,\beta}}{\vv^2}\right)\langle\xi^\alpha,\xi^\beta\rangle_\phi
    \leq0.
\end{align}
Indeed, for any $p\in X$, let $\{X^{i}\}$ be the orthnormal basis of $T_pX$ with respect to $g_\phi$ and write $\xi^\alpha=\xi^\alpha_i X^i$.
Thus, one has
\begin{align*}
    \left(\frac{\vv_{,\alpha\beta}}{\vv}-\frac{\vv_{,\alpha}\vv_{,\beta}}{\vv^2}\right)\langle\xi^\alpha,\xi^\beta\rangle_\phi
    =\sum_{i=1}^n\left(\frac{\vv_{,\alpha\beta}}{\vv}-\frac{\vv_{,\alpha}\vv_{,\beta}}{\vv^2}\right)\xi^\alpha_i\xi^\beta_{\bar i}.
\end{align*}
Since $\vv$ is log-concave, then $\left(\frac{\vv_{,\alpha\beta}}{\vv}-\frac{\vv_{,\alpha}\vv_{,\beta}}{\vv^2}\right)_{\alpha,\beta}$ is a negative definite matrix. 
It follows that
\begin{align*}
    \left(\frac{\vv_{,\alpha\beta}}{\vv}-\frac{\vv_{,\alpha}\vv_{,\beta}}{\vv^2}\right)\xi^\alpha_i\xi^\beta_{\bar i}
    =(\xi^1_i,\ldots,\xi^r_i)  \cdot\left(\frac{\vv_{,\alpha\beta}}{\vv}-\frac{\vv_{,\alpha}\vv_{,\beta}}{\vv^2}\right)\cdot (\xi^1_{\bar i},\ldots,\xi^r_{\bar i})^T
    \leq0
\end{align*}
for each fixed $i$. Therefore, \eqref{log-concavity} follows.

In the study of the closedness part of the continuity method, it is also necessary to study the following modified (by automorphism) coupled equations (see Lemma \ref{sigma-action of couple system}):
\begin{subnumcases}{}
		\vv(m_\phi)\omega_\phi^n
		=e^F\omega^n, 
		\label{modified couple system1} \\
		\left(\Delta_{\phi}-\frac{\vv_{,\alpha}(m_{\phi})}{\vv(m_{\phi})} J\xi^\alpha\right)F
	   =-\ell_{\ext}(m_{\phi})\frac{\ww(m_{\phi})}{\vv(m_{\phi})} +(1-\frac{1}{t})\frac{\tr_{\vv,\phi}(\theta)}{\vv(m_{\phi})}\nonumber\\
	  \qquad\qquad\qquad\qquad\qquad\quad	  -\frac{\tr_{\vv,\phi}(\ddc f_*)}{\vv(m_{\phi})}
		-(1-\frac{1}{t})\underline{\theta}\frac{\ww(m_{\phi})}{\vv(m_{\phi})} +\frac{\tr_{\vv,\phi}\Ric(\omega)}{\vv(m_\phi)}, 
		\label{modified couple system2}
\end{subnumcases}
where $f_*\in \Psh(X, (\frac{1-t}{t})\theta)$, $\sup_X f_* = 0$.

\subsection{Examples}
The examples in this subsection all satisfy the log-concavity assumption of $\vv$.
By choosing the functions $\vv,\ww$ accordingly, the expression of the weighted-cscK metric can be formulated into K\"ahler--Einstein metric, cscK metric, K\"ahler--Ricci soliton, extremal metrics. We refer the interested readers to the examples listed in\cite[Section 3]{Lah19}.

Among others, one interesting example is studied in \cite{Ino22}, where the author defined the {constant $\mu_\xi^\lambda$-scalar curvature K\"ahler metric} ($\mu$-cscK for short) as a generalization of cscK metric. 
The $\mu_\xi^\lambda$-scalar curvature is defined by
\begin{align*}
	S_\xi^\lambda(\omega_\phi)
	=S(\omega_\phi) -2\Delta_\phi m_\phi^\xi -J\xi(m_\phi^\xi) -\lambda m_\phi^\xi,
\end{align*}
where $J\xi + \sqrt{-1}\xi$ is a holomorphic vector field, $m_\phi^\xi$ is the corresponding Hamiltonian function of $\xi$, $\lambda\in\RR$.
The $\mu$-cscK metric can be expressed as a weighted-cscK metric by letting $\vv=\ww=e^{m_\phi^\xi}$.
When $X$ is a Fano manifold, one interesting feature of the $\mu$-cscK metric $\omega_\phi$ is that, it interpolates the extremal metric and the K\"ahler--Ricci soliton metric under the assumption of the existence. In particular, as $\lambda\to -\infty$,  $\omega_\phi$ converges to the extremal metric; as $\lambda$ converges to the average scalar curvature $\underline{S}>0$, $\omega_\phi$ converges to the K\"ahler--Ricci soliton.

Another interesting feature is, when the $\mu$-cscK metric exists, the vector field $\xi$ is a critical point of the volume functional $\Vol^\lambda(\xi)$, where
\begin{align*}
    \log(\Vol^\lambda)(\xi) = \frac{1}{\int_X e^{m_\phi^\xi}\omega_\phi^n}\int_X \Big(S(\omega_\phi) -\Delta_\phi m_\phi^\xi - \lambda m_\phi^\xi \Big) e^{m_\phi^\xi} \omega_\phi^n + \lambda\cdot \log(\int_X e^{m_\phi^\xi}\omega_\phi^n).
\end{align*}
After a normalization, $\log(\Vol^\lambda)$ has the same expression as Perelman's $W$-functional \cite{Pe02}. When restricted on Fano manifolds, $\log(\Vol^\lambda)$ is equal to Tian--Zhu's volume functional \cite{TZ02} up to the difference of a constant.

Some other related works can be found in \cite{He19, Ino21, Liu22, Dyr22, Jub23, Zha23, Fut24, GJSS24, His24, NN24}.

\section{Apriori estimates}
\label{Apriori estimates}
In this section, we provide uniform $C^0$-estimate, $W^{2,p}$-estimate of $\phi$ and gradient estimate of $F+f_*-\log(\vv(m_\phi))$ for equation \eqref{modified couple system1}, \eqref{modified couple system2}.
The proof of the uniform $C^0$-estimate is based on a modification of Guo--Phong's method \cite{guo2022}. 
The proof of $W^{2,p}$-estimate and gradient estimate follows from Chen--Cheng's method \cite{CC21b} with a new choice of test functions and integral weights.

\subsection{Uniform \texorpdfstring{$C^0$}{}-estimate}
\label{Uniform C0-estimate}

Let $\omega_X$ be a fixed smooth K\"ahler form, and let $\omega = \chi + s \omega_X$ be a smooth K\"ahler form, where $\chi$ is a smooth $(1,1)$-form in a nef class, $s> 0$.
We simplify the equations \eqref{modified couple system1}, \eqref{modified couple system2}.
The equation \eqref{modified couple system1} can be rewritten as
\begin{align*}
	\vv(m_\phi)\; \omega_\phi^n
		= e^{F+\log\frac{\omega^n}{\omega_X^n}}  \omega_X^n
		=e^{\tilde F+\log c_{\omega,\vv}}\omega_X^n,
\end{align*}
where we denote $\tilde{F}=F+\log\frac{\omega^n}{\omega_X^n}-\log c_{\omega,\vv}$.
On the other hand, the equation \eqref{modified couple system2} can be re-written as
\begin{align*}
	(\Delta_\phi-\frac{\vv_{,\alpha}}{\vv}J\xi^\alpha)(F+f_*)=-f_t-\frac{\vv_{,\alpha}}{\vv}\Delta_\omega(m_\omega^{\xi^\alpha})+\tr_\phi((1-\frac{1}{t})\theta+\Ric(\omega)).
\end{align*}
Thus, by \eqref{Jxi log volume}, one has
\begin{align*}
	(\Delta_\phi-\frac{\vv_{,\alpha}}{\vv}J\xi^\alpha)(\tilde{F}+f_*)
	=&-f_t-\frac{\vv_{,\alpha}}{\vv}\Delta_\omega(m_\omega^{\xi^\alpha})+\tr_\phi((1-\frac{1}{t})\theta+\Ric(\omega)) \\
	 &+\Delta_\phi\log\frac{\omega^n}{\omega_X^n}-\frac{\vv_{,\alpha}}{\vv}J\xi^\alpha\log\frac{\omega^n}{\omega_X^n}\\
	=&-f_t-\frac{\vv_{,\alpha}}{\vv}\Delta_{\omega_X}(m_{\omega_X}^{\xi^\alpha})+\tr_\phi\Big((1-\frac{1}{t})\theta+\Ric(\omega_X)\Big).
\end{align*}

We will consider the uniform $C^0$-estimate of the following PDE system
\begin{equation}
\label{CscK_u}
\begin{cases}
    \vv(m_\phi)\; \omega_\phi^n
		=c_{\omega,\vv} \; e^{\tilde{F}}  \omega_X^n, \\
  (\Delta_\phi - \frac{\vv_{,\alpha}}{\vv}J\xi^\alpha ) \big(\tilde{F} +  f_* \big)
		=-f + \tr_\phi\eta, 
\end{cases}
\end{equation}
where  $c_{\omega,\vv} = \int_X \vv \; \omega^n/{\int_X\omega_X^n}$. 
We assume that $f$ is a smooth function on $X$. Furthermore, we also assume that there exists a constant $p>1$ such that $\int_X e^{-p f_*} \omega^n < C(p)$.
\begin{remark}
\label{remark for xi(f_*)}
    In the modified continuity path, for the $(\frac{1-t}{t})\theta$-psh function $f_*$, for arbitrarily large $p$, we have $\int_X e^{-pf_*}\omega^n$ uniformly bounded from above when $t$ is sufficiently close to $1$ (see Proposition \ref{integrability of f_*}).
    Furthermore, the function $\frac{\vv_{,\alpha}}{\vv} J\xi^\alpha (f_*)$ is bounded by a constant which only depends on the class $[\theta]$. 
    Indeed,  denote $\theta_t:=\frac{1-t}{t}\theta$ and by \eqref{moment_map_formula},
    \begin{align*}
    	-\frac{\vv_{,\alpha}(m_\phi)}{\vv(m_\phi)} J\xi^\alpha(f_*)
	=\frac{\vv_{,\alpha}(m_\phi)}{\vv(m_\phi)}\d^cf_*(\xi^\alpha)
	=\frac{\vv_{,\alpha}(m_\phi)}{\vv(m_\phi)}(m_{\theta_t+\ddc f_*}^{\xi^\alpha}-m_{\theta_t}^{\xi^\alpha}),
    \end{align*}
   where the right hand side only depends on the moment polytope determined by $[\theta]$.
       
    Therefore the modified continuity path \eqref{modified couple system1}, \eqref{modified couple system2} can be reformulated into \eqref{CscK_u} when $t$ is sufficiently close to $1$.
\end{remark}

\begin{thm}
\label{C^0-estimate}
	Assume $\omega \leq \kappa \omega_X$ for some $\kappa >0$ and
 $\int_X \tilde{F} e^{\tilde{F}}\omega_X^n \leq K_1$, $\eta \geq -K_2 \omega$, $\int_X e^{-p f_*} \omega^n < C(p)$ for some $p > 1$, $\sup_X f_* = 0$, $\sup_X \phi = 0$.
 Then
 $\sup_X |\phi|\leq C$, $\sup_X (\tilde F+  f_*) \leq C$,
 where the constant $C$ depends on $\omega_X, n,\kappa,  f, \vv, C(p), K_1, K_2$.

 If we further assume $\eta\leq K_3 \omega$, then
 $\tilde{F}+ f_*$ is bounded from below by another constant depending further on $K_3$.
\end{thm}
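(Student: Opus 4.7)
The plan is to prove the two $L^{\infty}$-bounds in two stages: first on $\phi$ alone, then on $\tilde F+f_{*}$ using a Guo--Phong-type auxiliary Monge--Amp\`ere argument adapted to the weighted linearized operator $L:=\Delta_{\phi}-\frac{\vv_{,\alpha}}{\vv}J\xi^{\alpha}$.

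\emph{Step 1: $\sup_{X}|\phi|\le C$.} Rewrite the first equation as $\omega_{\phi}^{n}=h\,\omega_{X}^{n}$ with $h=c_{\omega,\vv}\vv(m_{\phi})^{-1}e^{\tilde F}$. Because $\vv$ is smooth and positive on the compact polytope $P$, it lies in a bounded positive range, so $h\log h$ differs from $\tilde F e^{\tilde F}$ by a factor times $e^{\tilde F}$ plus a bounded multiple of $e^{\tilde F}$. The hypothesis $\int_{X}\tilde F e^{\tilde F}\omega_{X}^{n}\le K_{1}$ together with $\int_{X}e^{\tilde F}\omega_{X}^{n}=\int_{X}\vv\,\omega^{n}/c_{\omega,\vv}$ therefore yields an entropy bound $\int_{X}h\log h\,\omega_{X}^{n}\le C(K_{1},\vv,\omega,\omega_{X})$. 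Applying the Guo--Phong $L^{\infty}$-estimate \cite{guo2022} (whose only input beyond geometric data is such an entropy bound, together with $\omega\le\kappa\omega_{X}$), we get $-C\le\phi\le 0$.

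\emph{Step 2: upper bound on $\tilde F+f_{*}$.} This is the main work. Put $u:=\tilde F+f_{*}$ and $\Lambda:=\sup_{X}u$. For a parameter $s>0$ I plan to introduce an auxiliary $K$-invariant potential $\psi_{s}\in\Psh_{K}(X,\omega)$, normalized by $\sup_{X}\psi_{s}=0$, solving
\begin{equation*}
\omega_{\psi_{s}}^{n}=A_{s}\,e^{-s(u-\Lambda-\tau_{s})}\,\omega^{n},
\end{equation*}
where $\tau_{s}>0$ is chosen so that both sides have equal mass (this fixes $A_{s}$). The $L^{p}$-integrability $\int_{X}e^{-pf_{*}}\omega^{n}\le C(p)$ combined with the $L^{1}$-bound $\int_{X}e^{\tilde F}\omega_{X}^{n}\le C$ (from $\vv\,\omega_{\phi}^{n}=c_{\omega,\vv}e^{\tilde F}\omega_{X}^{n}$) guarantees $e^{-su}\in L^{1}(\omega^{n})$ for $s$ small enough, hence solvability of the auxiliary Monge--Amp\`ere equation by Ko{\l}odziej's theorem, with uniform control of $A_{s}$ and $\|\psi_{s}\|_{L^{\infty}}$. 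Next, one applies the operator $L$ to the comparison function
\begin{equation*}
\Phi:=-\varepsilon\bigl(-u+\Lambda+\tau_{s}\bigr)^{\frac{n+1}{n}}+\psi_{s}+B\phi,
\end{equation*}
and examines it at an interior minimum (of course $L\Phi\ge 0$ there). From the second equation $L(u)=-f+\tr_{\phi}\eta\ge -\|f\|_{\infty}-K_{2}\tr_{\phi}\omega$; from $L(\phi)=n-\tr_{\phi}\omega-\frac{\vv_{,\alpha}}{\vv}J\xi^{\alpha}(\phi)$ and from the MA equation for $\psi_{s}$ via the arithmetic--geometric mean inequality $\tr_{\phi}\omega_{\psi_{s}}\ge n\bigl(\omega_{\psi_{s}}^{n}/\omega_{\phi}^{n}\bigr)^{1/n}$, choosing $B$ sufficiently large so that the $-B\tr_{\phi}\omega$ term dominates both $-K_{2}\tr_{\phi}\omega$ and the first-order error, and $\varepsilon$ sufficiently small, one arrives at an inequality of the form $\tau_{s}^{1/n}\le C$, which bounds $\Lambda$ from above.

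\emph{Step 3: the main obstacle and Step 4.} The principal new difficulty compared with the Chen--Cheng/Guo--Phong setting is the first-order term $\frac{\vv_{,\alpha}}{\vv}J\xi^{\alpha}$ appearing in $L$. I would handle it by exploiting two facts: (i) $|J\xi^{\alpha}(\phi)|=|m_{\phi}^{\xi^{\alpha}}-m_{\omega}^{\xi^{\alpha}}|$ is uniformly bounded by the diameter of the moment polytope $P$, so $L(\phi)$ only loses an additive constant compared with the unweighted case; (ii) arranging $\psi_{s}$ to be $K$-invariant (which is automatic from uniqueness and $K$-invariance of the data) makes $J\xi^{\alpha}(\psi_{s})=0$, killing the troublesome term on the auxiliary potential; and (iii) in the gradient term coming from $(-u+\Lambda+\tau_{s})^{(n+1)/n}$, the extra factor of $J\xi^{\alpha}(u)$ can be absorbed because $\frac{\vv_{,\alpha}}{\vv}J\xi^{\alpha}(f_{*})$ is bounded (as observed in Remark \ref{remark for xi(f_*)}) and $\frac{\vv_{,\alpha}}{\vv}J\xi^{\alpha}(\tilde F)$ is controlled by the Hamiltonian identity \eqref{derivative of F}. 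Finally, for the lower bound on $\tilde F+f_{*}$ under the extra assumption $\eta\le K_{3}\omega$, I would run the mirror argument: $L(u)\le\|f\|_{\infty}+K_{3}\tr_{\phi}\omega$, and a symmetric auxiliary Monge--Amp\`ere with $e^{+su}$ on the right-hand side (whose solvability uses the $L^{1}$-bound on $e^{u}$ from Step 2 plus $e^{-f_{*}}\in L^{p}$) produces the bound $\inf_{X}(\tilde F+f_{*})\ge -C$.
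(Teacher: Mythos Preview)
Your plan has two genuine gaps that prevent it from going through as written.

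\textbf{Step 1 is not self-contained.} The Guo--Phong $L^\infty$ estimate you invoke requires ${\rm Ent}_l(e^{\tilde F})$ for some $l>n$ (equivalently $e^{\tilde F}\in L^q$ for some $q>1$), not merely the basic entropy $\int\tilde F e^{\tilde F}\omega_X^n\le K_1$. The paper's whole point here is to \emph{bootstrap} from $K_1$ to the higher entropy using the second equation of the system; only then does $\|\phi\|_{C^0}\le C$ follow. So Step~1 cannot be decoupled from Step~2 the way you propose.

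\textbf{Step 2's auxiliary equation need not be solvable.} You propose $\omega_{\psi_s}^n=A_se^{-s(u-\Lambda-\tau_s)}\omega^n$ with $u=\tilde F+f_*$, and claim solvability from $e^{-pf_*}\in L^1$ together with $e^{\tilde F}\in L^1$. But integrability of $e^{\tilde F}$ says nothing about $e^{-s\tilde F}$: since there is no a priori lower bound on $\tilde F$, the right-hand side $e^{-su}=e^{-s\tilde F}e^{-sf_*}$ need not lie in $L^1$, and Ko{\l}odziej does not apply. The paper instead solves an auxiliary equation with right-hand side proportional to $\tau_k\bigl(-\phi+\delta(\tilde F+f_*)\bigr)e^{\tilde F}$, whose integrability is automatic from the entropy bound $K_1$ (this is the Guo--Phong device from their Theorem~3), and runs a maximum principle on $\Psi=-\epsilon(-\psi_k+\Lambda)^{n/(n+1)}-\phi+\delta(\tilde F+f_*)$. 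The output is $\delta(\tilde F+f_*)\le -\tilde\epsilon\psi_k+C_{\tilde\epsilon}$, from which $\int e^{r\delta\tilde F}\omega_X^n\le C$ via the $\alpha$-invariant bound on $\psi_k$ and H\"older against $e^{-f_*}\in L^p$. That closes Steps~1 and~2 simultaneously.

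Two smaller corrections. In Step~3(ii), $K$-invariance of $\psi_s$ gives $\xi^\alpha(\psi_s)=0$, \emph{not} $J\xi^\alpha(\psi_s)=0$; the latter equals $m_\omega^{\xi^\alpha}-m_{\psi_s}^{\xi^\alpha}$ and is only bounded. The paper handles the drift differently: at the maximum of $\Psi$ one has $\frac{\vv_{,\alpha}}{\vv}J\xi^\alpha(\Psi)=0$, which lets one trade $J\xi^\alpha(\tilde F+f_*)$ for $J\xi^\alpha(\phi)$ and $J\xi^\alpha(\psi_k)$, both bounded. Finally, for the pointwise upper and lower bounds on $\tilde F+f_*$ the paper does not run another auxiliary Monge--Amp\`ere equation; it applies a mean-value inequality (Guo--Phong, Lemma~3) to $\tilde F+f_*\mp K\phi$, which satisfies $\bigl(\Delta_\phi-\frac{\vv_{,\alpha}}{\vv}J\xi^\alpha\bigr)(\cdot)\ge -a$.
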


\begin{remark}
    The format of the statement of Theorem \ref{C^0-estimate} is to make it suitable for our future application in singular cases.
\end{remark}

\begin{proof}

    By applying \cite[Theorem 2]{guo2022} to equation $\omega_\phi^n = c_\omega e^{\tilde{F}}\omega_X^n$, for $l>n$, we have
    $\sup_X|\phi|\leq C$, where $C$ depends on $\omega_X, n, l, \kappa, {\rm Ent}_l(e^{\tilde{F}})$.

    Therefore, we just need to modify \cite[Theorem 3]{guo2022} to use $\int_X \tilde{F} e^{\tilde{F}} \omega_X^n$ to bound ${\rm Ent}_l(\tilde{F})$.

    We need to use the auxiliary function $\psi_k$ which satisfies the following equation
    \begin{equation}
        (\omega+{\rm dd}^c \psi_k)^n = \frac{\tau_k\Big(-\phi+\delta(\tilde{F} +  f_* )\Big)}{A_k} \cdot c_{\omega,\vv} \cdot e^{\tilde{F}} \omega_X^n,
        \quad\sup_X\psi_k=0,
    \end{equation}
    where $\delta = \frac{1}{10}K_2^{-1}$, $\tau_k:\RR\>\RR_{+}$ is a sequence of positive smooth functions that converges monotonically decreasingly to the function $x\cdot{\rm Id}_{\RR_+}(x)$.
    And 
    $$A_k = \frac{c_{\omega,\vv}}{V_\omega}\int_X \tau_k\Big(-\phi+\delta(\tilde{F} + f_*  )\Big)\cdot e^{\tilde{F}}\omega_X^n,$$ which converges to 
    \begin{align*}
        A_\infty &= \frac{c_{\omega,\vv}}{V_\omega}\int_\Omega \Big(-\phi+\delta(\tilde{F} + f_* ) \Big)e^{\tilde{F}}\omega_X^n \\
        &\leq \frac{c_{\omega,\vv}}{V_\omega}\int_\Omega \Big(-\phi+\delta\cdot \tilde{F}  \Big) e^{\tilde{F}}\omega_X^n,
    \end{align*}
     where $\Omega = \{x\in X:-\phi+\delta(\tilde{F} + f_*) >0 \}$ and $0\leq A_\infty\leq C(K_1)$, which is due to the entropy bound assumption, \cite[Theorem 1 (ii)]{guo2022} for $p=1$ and H\"older inequality.

 Consider 
\begin{equation}
    \Psi = -\epsilon (-\psi_k+\Lambda)^{\frac{n}{n+1}} -\phi + \delta (\tilde{F}+ f_* ),
\end{equation}
    where $\epsilon =2^{\frac{n-1}{n+1}} (\frac{(n+1)(n + C_{f,\delta})}{n^2})^{\frac{n}{n+1}} A_k^{\frac{1}{n+1}}$, $\Lambda^{\frac{1}{n+1}} = \frac{2n}{n+1}\epsilon$, where the constant $C_{f,\delta}$ is chosen such that $\left|-\delta f + \frac{\vv_\alpha}{\vv}J\xi^\alpha(\phi) \right|\leq C_{f,\delta}$.

    The next is to show that $\Psi\leq 0$. It suffices to assume $\sup_X \Psi$ is achieved at a point $x_0\in \Omega$.
    We have
    \begin{align*}
        \frac{\vv_\alpha}{\vv}J\xi^\alpha (\Psi) (x_0) = 0,
    \end{align*}
    which together with the second equation of \eqref{CscK_u} implies
    \begin{align}
        \delta\cdot \Delta_\phi (\tilde F+f_*) 
        = -\delta\cdot f + \frac{\vv_\alpha}{\vv}J\xi^\alpha (\phi) 
        + \delta \cdot \tr_\phi \eta
        - \frac{n\epsilon}{n+1} \frac{\vv_\alpha}{\vv}J\xi^\alpha(\psi_k) \cdot (-\psi_k + \Lambda)^{-\frac{1}{n+1}}.
    \end{align}

    Then at $x_0$, we have
    \begin{align*}
        0 &\geq \Delta_\phi \Psi \\
        & \geq \frac{\epsilon n}{n+1} (-\psi_k+\Lambda)^{\frac{-1}{n+1}}({\rm tr}_\phi\omega_{\psi_k}-{\rm tr}_\phi\omega) - (n-{\rm tr}_\phi \omega) + \delta \Delta_\phi(\tilde{F}+ f_* )\\
        &\geq \frac{\epsilon n^2}{n+1} (-\psi_k+\Lambda)^{\frac{-1}{n+1}}
        \Big( \frac{-\phi+\delta(\tilde{F}+ f_* )}{A_k} \Big)^{\frac{1}{n}}
         - n -  C_{f,\delta} \\
        &\quad+\left(1-\frac{\epsilon n}{n+1}(-\psi_k+\Lambda)^{\frac{-1}{n+1}} - \frac{1}{10}\right) {\rm tr}_\phi \omega 
         - \frac{n\epsilon}{n+1}\frac{\vv_\alpha}{\vv}J\xi^\alpha(\psi_k)(-\psi_k+\Lambda)^{\frac{-1}{n+1}}\\
        &\geq \frac{\epsilon n^2}{n+1} (-\psi_k+\Lambda)^{\frac{-1}{n+1}} \left( \Big(
        \frac{-\phi+\delta(\tilde{F}+ f_* )}{A_k} \Big)^{\frac{1}{n}} -\frac{\vv_\alpha}{n\vv}J\xi^\alpha(\psi_k) \right) -(n +  C_{f,\delta} ).
    \end{align*}
Note that $|\frac{\vv_\alpha}{n \vv} J\xi^\alpha (\psi_k)|$ is bounded by some constant $C'$ uniformly.
This implies
\begin{align*}
    -\phi+\delta(\tilde{F}+ f_* )  
    \leq&\left[C'+\frac{(n+1)(n+C_{f,\delta})}{\epsilon n^2}(-\psi_k+\Lambda)^{\frac{1}{n+1}} \right]^nA_k \\
    \leq&2^{n-1}\left( \frac{(n+1)(n+C_{f,\delta})}{\epsilon n^2} \right)^n A_k (-\psi_k+\Lambda)^{\frac{n}{n+1}} 
    + 2^{n-1}C'^{n}A_k \\
    =& \epsilon (-\psi_k+\Lambda)^{\frac{n}{n+1}} + C'.
\end{align*}
    This concludes that $\sup_X \Psi  = \Psi(x_0) \leq C'$.
Therefore
\begin{align*}
    \delta (\tilde{F}+ f_* ) \leq -\phi + \delta (\tilde{F}+ f_* ) \leq C \big( (-\psi_k +1)^{\frac{n}{n+1}} +1) \leq -\tilde{\epsilon} \psi_k + C_{\tilde{\epsilon}},
\end{align*}
 where $\tilde{\epsilon}$ can be chosen arbitrarily small. Denote $q^*>1$ such that $\frac{1}{q}+\frac{1}{q^*}=1$.
 Choose $r\,\tilde{\epsilon}$ such that $r\delta>1$ and $r q \tilde{\epsilon} \leq \alpha(X, \kappa \omega_X)$ for some $q^*>1$ that is close to $1$, and $p = q^* r \delta>1$.  Then we have
 \begin{align*}
     \int_X e^{r\delta \tilde{F}}\omega_X^n 
     &= \int_X e^{r\delta (\tilde{F}+f_* 
     -f_*)}\omega_X^n \\
     &\leq \Big(\int_X e^{q r\delta (\tilde{F}+  f_* )}\omega_X^n \Big)^{\frac{1}{q}}\cdot \Big(\int_X e^{-q^* r\delta f_*}\omega_X^n  \Big)^{\frac{1}{q^*}} \\
     &\leq C \Big(\int_X e^{-q r\tilde{\epsilon}\psi_k}  \omega_X^n \Big)^{\frac{1}{q}} \leq C'.
 \end{align*}
In particular, this implies $\Ent_l(\tilde{F})$ is uniformly bounded. Then by \cite[Theorem 1]{guo2023} we have $\sup_X |\phi| \leq C$, where the constant $C$ depends on parameters listed in the statement of the theorem. (Alternatively, we can also use the $C^0$-estimate in \cite{EGZ09}).

In order to prove the upper and lower bounds of $\tilde{F}+ f_*$, we need to use the mean-value type inequality proved in \cite{guo2022}.

Let $u:= \tilde{F}+ f_*-K_2\phi$, which satisfies $(\Delta_\phi - \frac{\vv_\alpha}{\vv}J\xi^\alpha ) u \geq -a$ for some constant $a\geq 0$.
Apply \cite[Lemma 3]{guo2022} to $u$, we can obtain the upper bound of $\tilde{F}+ f_*$.

Similarly, let $u:= -(\tilde{F}+ f_* )-K_3 \phi$, which satisfies $(\Delta_\phi -\frac{\vv_\alpha}{\vv}J\xi^\alpha ) u \geq -a$ for some $a\geq 0$, we can obtain a lower bound of $\tilde{F}+f_*$.
\end{proof}

\subsection{\texorpdfstring{$W^{2,p}$}{}-estimate}

For the convenience of the computation, 
since
\begin{align*}
	\frac{\tr_{\vv,\phi}(\ddc f_*)}{\vv(m_{\phi})}
	=\Delta_\phi f_*+\frac{\langle\d\vv(m_\phi),\d^cf_*\rangle}{\vv}
	=\Delta_\phi f_*-\frac{\vv_{,\alpha}(m_\phi)}{\vv(m_\phi)}J\xi^\alpha f_*,
\end{align*} we rewrite the coupled system \eqref{modified couple system1}, \eqref{modified couple system2} into
\begin{align}
\label{csck-mt}
         \vv(m_{\phi})\omega_{\phi}^n
		 &=e^{F}\omega^n, 
		 \\
		\left(\Delta_{\phi}-\frac{\vv_{,\alpha}(m_{\phi})}{\vv(m_{\phi})} J\xi^\alpha\right)F
		 & =\tilde{f}_t +\tr_{\phi}(\eta_t) 
            -\Delta_\phi f_*
            +\frac{\vv_{,\alpha}}{\vv}J\xi^\alpha f_*
            \nonumber,
\end{align}
where $f_*\in \Psh(X, (\frac{1}{t}-1)\theta)$, $\sup_X f_*=0$,
\begin{align*}
\tilde{f}_t = -(\ell_\ext+(1-\frac{1}{t})\underline{\theta})\frac{\ww}{\vv}
    +(1-\frac{1}{t})\frac{\langle\d\vv(m_\phi),m_\theta\rangle}{\vv}
    -\frac{\vv_{,\alpha}}{\vv}\Delta_\omega m_\omega^{\xi^\alpha}
\end{align*} 
which is a bounded function,
    $\eta_t = (1-\frac{1}{t})\theta + \Ric(\omega)$,
and $\int_X e^{-p f_*} \omega^n <C$ for a sufficiently large positive constant $p$. (which can be achieved when $t\to 1$ in the modified continuity path.)

Note that by \eqref{Lap of logv(m_phi)} and \eqref{derivative of F}, the second equation of \eqref{csck-mt} can be rewritten as
\begin{align}
	\Delta_\phi(F+\log\vv(m_\phi))
	=&\tilde{f_t}+\tr_{\phi}(\eta_t) 
            -\Delta_\phi f_*
            +\frac{\vv_{,\alpha}}{\vv}J\xi^\alpha f_*+\frac{\vv_{,\alpha}}{\vv}J\xi^\alpha F+\Delta_\phi\log\vv(m_\phi) \nonumber \\
   =&\tilde{f_t}+\frac{\vv_{,\alpha}}{\vv}\Delta_\omega(m_\omega^{\xi^\alpha})+\tr_{\phi}(\eta_t) 
            -\Delta_\phi f_*
            +\frac{\vv_{,\alpha}}{\vv}J\xi^\alpha f_* \nonumber \\
            &-2|\nabla^\phi\log\vv(m_\phi)|_{\phi}^2 
		 +\frac{\vv_{,\alpha\beta}(m_\phi)}{\vv(m_\phi)}\langle\xi^\alpha,\xi^\beta\rangle_\phi \nonumber \\
  =&-f_t+\frac{\vv_{,\alpha}}{\vv}J\xi^\alpha f_*  -\Delta_\phi f_* +\tr_\phi\eta_t
		 -2|\nabla^\phi\log\vv(m_\phi)|_{\phi}^2 
		 +\frac{\vv_{,\alpha\beta}(m_\phi)}{\vv(m_\phi)}\langle\xi^\alpha,\xi^\beta\rangle_\phi.
		 \label{Lap of F+logv(m_phi) twisted} 
\end{align}

\begin{lemma}
\label{Lap_phi log tr}
	Let $\phi$ be a solution of \eqref{csck-mt}, then
	\begin{align*}
		\Delta_\phi\log\tr\omega_\phi
		\geq\frac{\Delta(F-\log\vv(m_\phi))-S(\omega)}{\tr\omega_\phi}-B\tr_\phi\omega,
	\end{align*}
	where $B>0$ is a constant depending on the lower bound of the holomorphic bisectional curvature of $\omega$.
\end{lemma}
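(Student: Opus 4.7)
The plan is to recognize this as a direct application of the classical Chern--Lu (Aubin--Yau) second-order inequality, read through the first equation of \eqref{csck-mt}. The weighted structure of the PDE actually plays no role in this particular step; only the Monge--Amp\`ere identity enters.

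First, I would recall the pointwise inequality that holds for any pair of K\"ahler metrics $\omega$, $\omega_\phi$ on $X$:
\[
\Delta_\phi \log \tr \omega_\phi \;\geq\; \frac{-\tr_\omega \Ric(\omega_\phi)}{\tr \omega_\phi} \;-\; B\, \tr_\phi \omega,
\]
where $B$ depends only on a lower bound of the holomorphic bisectional curvature of $\omega$. This is the standard computation: one differentiates $\tr_\omega \omega_\phi = g^{i\bar j} g_{\phi, i\bar j}$ twice, computes $\Delta_\phi \tr \omega_\phi$ in normal coordinates for $\omega$ (so that the connection terms vanish to first order), and then the third-order terms are absorbed by a Kato/Cauchy--Schwarz inequality of the form $|\nabla \tr \omega_\phi|_\phi^2 \leq \tr \omega_\phi \cdot (\text{third order terms of } \omega_\phi)$, exactly so that passing to $\log \tr \omega_\phi$ makes them cancel. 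The curvature contribution splits into the Ricci of $\omega_\phi$ (divided by $\tr \omega_\phi$) and a bisectional curvature term of $\omega$ controlled by $\tr_\phi \omega$.

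Second, I would use the Monge--Amp\`ere identity \eqref{csck-mt}, namely $\vv(m_\phi)\,\omega_\phi^n = e^F \omega^n$, to write
\[
u \;:=\; \log \frac{\omega_\phi^n}{\omega^n} \;=\; F - \log \vv(m_\phi).
\]
The transformation rule for Ricci curvature under a change of K\"ahler potential, $\Ric(\omega_\phi) = \Ric(\omega) - \ddbar u$, then gives
\[
\tr_\omega \Ric(\omega_\phi) \;=\; S(\omega) - \Delta u \;=\; S(\omega) - \Delta\bigl(F - \log \vv(m_\phi)\bigr),
\]
where $\Delta = \tr_\omega \ddbar$. Substituting this into the Chern--Lu inequality above yields exactly the conclusion of the lemma.

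No genuine obstacle arises; the whole statement is really a bookkeeping of the classical inequality after the weight $\vv(m_\phi)$ has been isolated on one side of the Monge--Amp\`ere equation. The reason this lemma is useful (and why it is singled out) is that the right-hand side contains $\Delta F$ rather than $\Delta_\phi F$, so in the subsequent $W^{2,p}$-estimate one can pair it with an integration-by-parts argument against suitable test functions and absorb the awkward $\Delta \log \vv(m_\phi)$ using the log-concavity assumption on $\vv$; but that belongs to the next step, not to the proof of this lemma.
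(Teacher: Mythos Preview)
Your proof is correct and follows exactly the same route as the paper: invoke the standard Aubin--Yau/Chern--Lu inequality (the paper cites \cite[Lemma 3.7]{Sze14}) and then substitute $\Ric(\omega_\phi)=\Ric(\omega)-\ddbar(F-\log\vv(m_\phi))$ from the first equation of \eqref{csck-mt}. Your observation that the weighted structure plays no role in this particular lemma is also accurate.
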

\begin{proof}
	By \cite[Lemma 3.7]{Sze14}, then
	\begin{align*}
		\Delta_\phi\log\tr\omega_\phi
		\geq-B\tr_\phi\omega- \frac{g^{j\bar k}\Ric(\phi)_{j\bar k}}{\tr\omega_\phi},
	\end{align*}
	where $B>0$ is a constant depending on the lower bound of the holomorphic bisectional curvature of $\omega$.
	Since
	\begin{align*}
		\Ric(\omega_\phi)
		=\Ric(\omega)-\ddbar\log\frac{\omega_\phi^n}{\omega^n}
		=\Ric(\omega)-\ddbar(F-\log\vv(m_\phi)),
	\end{align*} 
	then
	\begin{align*}
		g^{j\bar k}\Ric(\phi)_{j\bar k}
		=S(\omega)-\Delta(F-\log\vv(m_\phi)).
	\end{align*}
	Therefore the lemma follows.
\end{proof}

\begin{thm}
\label{integral C^2}
	Assume $\vv$ is log-concave. Let $\phi$ be a solution of \eqref{csck-mt},  for any $p\geq1$, then
	\begin{align}
		\int_X e^{(p-1)f_*}(n+\Delta\phi)^p \omega^n \leq C,
	\end{align}
	where $C$ is a constant depending on $p$, $\vv$, $\ww$, $\|\phi\|_{C^0}$, $\|F+f_*\|_{C^0}$, an upper bound of $\theta$ and $\Ric(\omega)$,  and a lower bound of the holomorphic bisectional curvature of $\omega$.
\end{thm}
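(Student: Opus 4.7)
The plan is to adapt Chen--Cheng's scheme from \cite{CC21b} to the weighted setting. Write $u := n+\Delta\phi$ and $H := F+f_*-\log\vv(m_\phi)$; by Theorem \ref{C^0-estimate} the quantities $\|\phi\|_{C^0}$ and $\|F+f_*\|_{C^0}$ are under control, hence so is $\|H\|_{L^\infty}$. The idea is to test Lemma \ref{Lap_phi log tr} against a weight of the form $u^{p-1}\,e^{AH+(p-1)f_*}$ (for a large constant $A>0$ to be chosen) and integrate against $\omega^n$.

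Multiplying Lemma \ref{Lap_phi log tr} by this weight and integrating, integration by parts on the left produces a positive $|\nabla u|_\phi^2$ piece together with lower-order terms involving $\nabla H$ and $\nabla f_*$. The delicate summand on the right is
\[
\int_X \Delta(F-\log\vv(m_\phi))\,u^{p-1}\,e^{AH+(p-1)f_*}\,\omega^n,
\]
which a further integration by parts converts into pairings of $\nabla F$ with $\nabla u$, $\nabla H$, and $\nabla f_*$. The AM--GM inequality $\tr_\phi\omega \leq u^{n-1}\vv(m_\phi)e^{-F}$, a consequence of the first equation in \eqref{csck-mt}, allows the $Bu\,\tr_\phi\omega$ term to be absorbed into the weight, using the $C^0$ bound on $F+f_*$ to convert $e^{-F}$ into $e^{f_*}$-factors.

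The log-concavity hypothesis enters when the computation unearths a term of the form $-\tfrac{\vv_{,\alpha\beta}(m_\phi)}{\vv(m_\phi)}\langle\xi^\alpha,\xi^\beta\rangle_\phi$ (compare \eqref{Lap of F+logv(m_phi)}): inequality \eqref{log-concavity} bounds this by $|\nabla^\phi\log\vv(m_\phi)|_\phi^2$, which in turn is controlled by $\tr_\phi\omega$ via \eqref{gradient of logv(m_phi)} together with the boundedness of the Hamiltonians on the moment polytope. Without log-concavity an uncontrolled positive contribution would remain at this stage and the absorption step would break down.

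The main technical obstacle I foresee is the first-order term $\frac{\vv_{,\alpha}}{\vv}J\xi^\alpha F$ in the second equation of \eqref{csck-mt}: it is effectively a third-order derivative of $\phi$ and has no counterpart in the cscK setting of \cite{CC21b}. To tame it I would choose the test function so that, after integration by parts, the resulting gradient of $F$ can be paired by Cauchy--Schwarz with a small constant against $|\nabla u|_\phi^2$ and $|\nabla H|_\phi^2$; it is precisely this requirement that dictates the weight $e^{(p-1)f_*}$ and the auxiliary factor $e^{AH}$. Once all such terms are absorbed, an inequality of the shape
\[
\int_X u^p\,e^{(p-1)f_*}\,\omega^n \;\leq\; C\Bigl(1 + \int_X u^{q}\,e^{(p-1)f_*}\,\omega^n\Bigr)\qquad (q<p)
\]
should emerge, and iterating from the trivial bound $\int_X u\,\omega^n = n\int_X\omega^n$ together with the integrability of $e^{-pf_*}$ from Remark \ref{remark for xi(f_*)} closes the estimate.
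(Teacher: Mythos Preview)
Your outline follows the Chen--Cheng scheme and names the right ingredients, but there is a real gap in how you propose to neutralize the third-order term $\frac{\vv_{,\alpha}}{\vv}J\xi^\alpha F$, and the sign of $\log\vv$ in your barrier is wrong for this purpose.

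The paper does \emph{not} tame $J\xi^\alpha F$ by Cauchy--Schwarz against $|\nabla H|_\phi^2$ (which is unavailable---no gradient bound exists at this stage). Instead it chooses the barrier $u=e^{-a(F+\log\vv(m_\phi)+\delta f_*+b\phi)}\tr\omega_\phi$, with $+\log\vv$ rather than the $-\log\vv$ implicit in your $H$. The point is that by \eqref{Lap of F+logv(m_phi)} the quantity $\Delta_\phi(F+\log\vv)$ contains \emph{no} $J\xi^\alpha F$ term; it has been traded for $-2|\nabla^\phi\log\vv|_\phi^2+\frac{\vv_{,\alpha\beta}}{\vv}\langle\xi^\alpha,\xi^\beta\rangle_\phi$. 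Your $H=F+f_*-\log\vv$ would instead invoke \eqref{Laplace of F'}, which retains $2\frac{\vv_\alpha}{\vv}J\xi^\alpha(F-\log\vv)$, and your plan to absorb this into $|\nabla u|_\phi^2$ and $|\nabla H|_\phi^2$ cannot close.

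Second, the paper integrates against the weighted measure $e^{2a\log\vv}\omega_\phi^n$, not $\omega^n$. Integrating $\Delta_\phi(u^{2p+1})$ by parts against this weight produces an extra cross term $-2a\int u^{2p}\langle\nabla^\phi u,\nabla^\phi\log\vv\rangle_\phi\,e^{2a\log\vv}\omega_\phi^n$; after Cauchy--Schwarz this contributes $+a\int u^{2p+1}|\nabla^\phi\log\vv|_\phi^2$. Combined with the $-2a|\nabla^\phi\log\vv|_\phi^2$ and $+a\frac{\vv_{,\alpha\beta}}{\vv}\langle\xi^\alpha,\xi^\beta\rangle_\phi$ coming from the barrier, the net is $a\bigl(\frac{\vv_{,\alpha\beta}}{\vv}-\frac{\vv_{,\alpha}\vv_{,\beta}}{\vv^2}\bigr)\langle\xi^\alpha,\xi^\beta\rangle_\phi\le 0$ by log-concavity. (Note the direction: \eqref{log-concavity} bounds $+\frac{\vv_{,\alpha\beta}}{\vv}\langle\xi^\alpha,\xi^\beta\rangle_\phi$ above by $|\nabla^\phi\log\vv|_\phi^2$, not the minus sign you wrote.) This exact cancellation is the mechanism; your weight $e^{AH+(p-1)f_*}$ does not reproduce it.

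Finally, the iteration is a bit different from your sketch: with $a=p+1$ and $\delta=\frac{a-1}{a}$ the paper obtains
\[
\int_X e^{(2p+1-\frac{n-2}{n-1})f_*}(n+\Delta\phi)^{2p+1+\frac{1}{n-1}}\omega^n \;\le\; C\int_X e^{2pf_*}(n+\Delta\phi)^{2p+1}\omega^n,
\]
so the $(n+\Delta\phi)$-exponent and the $f_*$-weight advance together by $\tfrac{1}{n-1}$ each step; the parameter $\delta$ is chosen precisely so that the $\Delta_\phi f_*$ and $\Delta f_*$ contributions combine into a term controlled by $\tr_\phi\omega\cdot(n+\Delta\phi)$.
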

\begin{remark}
    By H\"older inequality, we also have
    \begin{align}
        \int_X (n+\Delta\phi)^p \omega^n \leq C,
    \end{align}
    for a constant $C$ that depends on the same parameters as stated above.
\end{remark}
\begin{proof}
	We consider 
	\begin{align*}
		u:=e^{-a(F+\log\vv(m_\phi)+\delta f_* +b\phi)}\tr\omega_\phi,
	\end{align*}
	where $a,b>1$ are uniform constants determined later.
	Since 
	$$\ddbar e^{f}=e^f\ddbar f+e^f\sq\p f\wedge\db f,$$
	then one has
	\begin{align}
		\Delta_\phi u
		=\Delta_\phi e^{\log u}
		=&e^{\log u}\Delta_\phi\log u
		 +u\nabla^\phi(\log u)\cdot_\phi\nabla^\phi(\log u) \nonumber \\
		=&-au\Delta_\phi(F+\log\vv(m_\phi) +\delta f_* +b\phi) +u\Delta_\phi\log\tr\omega_\phi+u^{-1}|\nabla^\phi u|_\phi^2 .
		\label{Lap_phi e logu}
	\end{align}
	By \eqref{Lap of F+logv(m_phi) twisted} and Remark \ref{remark for xi(f_*)}, we have
	\begin{align}
		&\Delta_\phi(F+\log\vv(m_\phi) + \delta f_* +b\phi) \nonumber \\
		&= -f_t+\frac{\vv_{,\alpha}}{\vv}J\xi^\alpha f_* +\tr_\phi\eta_t
		 -2|\nabla^\phi\log\vv(m_\phi)|_{\phi}^2 
		 +\frac{\vv_{,\alpha\beta}(m_\phi)}{\vv(m_\phi)}\langle\xi^\alpha,\xi^\beta\rangle_\phi +bn-b\tr_\phi\omega \nonumber
            -(1-\delta)\Delta_\phi f_*\\
		&\leq (C+bn-f_t) + (A_1-b)\tr_\phi\omega
		-2|\nabla^\phi\log\vv(m_\phi)|_{\phi}^2 
		 +\frac{\vv_{,\alpha\beta}(m_\phi)}{\vv(m_\phi)}\langle\xi^\alpha,\xi^\beta\rangle_\phi -(1-\delta)\Delta_\phi f_*, 
   \label{Lap_phi(tilde F+bphi)}
	\end{align}
	where $A_1>0$ is a constant satisfying $\eta_t\leq A_1\omega$.
	By Lemma \ref{Lap_phi log tr}, \eqref{Lap_phi e logu} and \eqref{Lap_phi(tilde F+bphi)}, 
	\begin{align}
		\Delta_\phi u 
		&\geq e^{-a(F+\log\vv +\delta f_* +b\phi)}
		\Big(a(f_t-bn-C + (1-\delta)\Delta_\phi f_*)\tr\omega_\phi+(ab-aA_1-B)\tr_\phi\omega\tr\omega_\phi \nonumber \\
		&\qquad\quad
		+\Delta F'-S(\omega) \Big) +2au|\nabla^\phi\log\vv(m_\phi)|_{\phi}^2 
		 -au\frac{\vv_{,\alpha\beta}(m_\phi)}{\vv(m_\phi)}\langle\xi^\alpha,\xi^\beta\rangle_\phi+u^{-1}|\nabla^\phi u|_\phi^2\ ,
		 \label{Lap_phi u geq in W^2,p}
	\end{align}
	where we denote $F':=F-\log\vv(m_\phi)$.

Note that
\begin{align*}
		\frac{1}{2p+1}\Delta_\phi u^{2p+1}
		=&u^{2p}\Delta_\phi u +2pu^{2p-1}|\nabla^\phi u|_\phi^2. 
	\end{align*}
Integrate the equality above with respect to the weighted measure $e^{2a\log\vv}\omega_\phi^n$, together with \eqref{Lap_phi u geq in W^2,p},
we obtain
\begin{align*}
    & \int_X 2p u^{2p-1} \cdot |\nabla^\phi u|_\phi^2 \cdot e^{2a\log\vv} \omega_\phi^n \\
    =& \int_X u^{2p} \Big( -\Delta_\phi u - \langle \nabla^\phi u, 2a \nabla^\phi \log\vv \rangle_\phi \Big) \cdot e^{2a\log\vv} \omega_\phi^n\\
    \leq& -2a\int_X u^{2p} \langle \nabla^\phi u,\nabla^\phi \log\vv \rangle_\phi e^{2a\log\vv} \omega_\phi^n \\
    & -\int_X u^{2p}e^{-a(F+\log\vv+\delta f_* +b\phi)} \Bigl\{ a\Big(f_t-bn-C + (1-\delta)\Delta_\phi f_* \Big) \tr\omega_\phi
    + (ab-a A_1 - B)\tr_\phi\omega \tr \omega_\phi \\
    & + \Delta (F-\log\vv) - S(\omega) \Bigr\} e^{2a\log\vv} \omega_\phi^n \\
    & -2a\int_X u^{2p+1} |\nabla^\phi \log\vv|_\phi^2 e^{2a\log\vv} \omega_\phi^n - \int_X u^{2p-1} |\nabla^\phi u|_\phi^2 e^{2a\log\vv} \omega_\phi^n\\
    & + a\int_X u^{2p+1} \frac{\vv_{,\alpha\beta}}{\vv} \langle \xi_{\alpha}, \xi_{\beta} \rangle_\phi e^{2a\log\vv} \omega_\phi^n.
\end{align*}
Using the fact that $|S(\omega)| < C$, and by choosing $b>1$ sufficiently large so that $ab-a A_1 -B \geq \frac{ab}{2}$ and  $|a(f_t-bn-C)| \leq(n+1)ab$,
the inequality above can be simplified into
\begin{align}
    & (2p+1-a)\int_X  u^{2p-1} \cdot |\nabla^\phi u|_\phi^2 \cdot e^{2a\log\vv} \omega_\phi^n \nonumber \\
    \leq& (n+1)ab\int_X u^{2p+1} e^{2a\log\vv} \omega_\phi^n -a(1-\delta) \int_X u^{2p+1} \Delta_\phi f_* e^{2a\log\vv} \omega_\phi^n \nonumber
 \\
    & -\frac{ab}{2}\int_X u^{2p+1} \tr_\phi \omega e^{2a\log\vv} \omega_\phi^n
     + C\int_X u^{2p} e^{-a(F+\log\vv+\delta f_* +b\phi)} e^{2a\log\vv} \omega_\phi^n \nonumber \\
    &    -\int_X u^{2p} e^{-a(F+\log\vv+\delta f_* +b\phi)} \Delta(F-\log\vv) e^{2a\log\vv} \omega_\phi^n,
    \label{ineq reduced by log-concave}
\end{align}
where we use the fact \eqref{log-concavity}
which is ensured by the log-concavity of $\vv$, and 
\begin{align*}
    & -2a\int_X u^{2p} \langle \nabla^\phi u,\nabla^\phi \log\vv \rangle_\phi e^{2a\log\vv} \omega_\phi^n \\
    \leq& {a} \int_X u^{2p+1} |\nabla^\phi \log\vv|_\phi^2 e^{2a\log\vv}\omega_\phi^n 
     + a\int_X u^{2p-1} |\nabla^\phi u|_\phi^2 e^{2a\log\vv}\omega_\phi^n.
\end{align*}
  Next, we handle the term involving $\Delta(F-\log\vv)$ in the above inequality by repeating the integration by parts.
Recall that $\omega_\phi^n = e^{F-\log\vv}\omega^n$.
 \begin{align*}
     & -\int_X u^{2p} e^{-a(F+\log\vv +\delta f_* +b\phi)} \Delta (F-\log\vv) e^{2a\log\vv} \omega_\phi^n \\
     =& -\int_X u^{2p} e^{(1-a)(F-\log\vv)-a\delta f_* -ab\phi} \frac{1}{1-a} \Delta \Big( (1-a)(F-\log\vv) \Big) \omega^n \\
     =& -\int_X u^{2p} e^{(1-a)(F-\log\vv)-a\delta f_* -ab\phi} \cdot \frac{1}{1-a} \Delta \Big((1-a)(F-\log\vv) -a\delta f_* -ab\phi \Big) \omega^n \\
     & - \int_X u^{2p} e^{(1-a)(F-\log\vv)-a\delta f_* -ab\phi} \cdot \frac{a\delta\Delta f_* + ab\Delta\phi}{1-a} \omega^n \\
     =:& I + II.
 \end{align*}
One has
 \begin{align*}
     I =& -\int_X u^{2p} e^{(1-a)(F-\log\vv)-a\delta f_* -ab\phi} \cdot \frac{1}{1-a} \Delta \Big((1-a)(F-\log\vv) -a\delta f_* -ab\phi \Big) \omega^n \\
     =& -\int_X \frac{u^{2p} e^{(1-a)(F-\log\vv)-a\delta f_* -ab\phi}}{a-1} \cdot | \nabla \big( (1-a)(F-\log\vv) -a\delta f_* -ab\phi \big) |^2 \omega^n \\
     & - \int_X \frac{2p}{a-1} u^{2p-1} e^{(1-a)(F-\log\vv)-a\delta f_* -ab\phi} \cdot \langle \nabla u, \nabla \big( (1-a)(F-\log\vv) -a\delta f_* -ab\phi \big) \rangle \omega^n \\
     \leq&  -\int_X \frac{u^{2p} e^{(1-a)(F-\log\vv)-a\delta f_* -ab\phi}}{a-1} \cdot | \nabla \big( (1-a)(F-\log\vv) -a\delta f_* -ab\phi \big) |^2 \omega^n \\
     & + \int_X \frac{u^{2p} e^{(1-a)(F-\log\vv)-a\delta f_* -ab\phi}}{(a-1)} \cdot | \nabla \big( (1-a)(F-\log\vv) -a\delta f_* -ab\phi \big) |^2 \omega^n \\
     & + \int_X \frac{p^2}{a-1} u^{2p-2} e^{(1-a)(F-\log\vv)-a\delta f_* -ab\phi} \cdot |\nabla u|^2 \omega^n \\
     \leq& \frac{p^2}{a-1} \int_X u^{2p-2} e^{-a(F-\log\vv)-a\delta f_* -ab\phi} \cdot |\nabla u|^2 \omega_\phi^n \\
     =& \frac{p^2}{a-1} \int_X  u^{2p-2} e^{-a(F+\log\vv+\delta f_* +b\phi)} \cdot |\nabla u|^2 e^{2a\log\vv} \omega_\phi^n \\
     \leq& \frac{p^2}{a-1} \int_X  u^{2p-2} e^{-a(F+\log\vv+\delta f_* +b\phi)} \cdot |\nabla^\phi u|_\phi^2 (n+\Delta\phi) e^{2a\log\vv} \omega_\phi^n \\
     =&\frac{p^2}{a-1} \int_X  u^{2p-1} |\nabla^\phi u|_\phi^2 e^{2a\log\vv} \omega_\phi^n,
 \end{align*}
where we have used 
	\begin{align*}
		|\nabla^\phi u|_\phi^2\tr\omega_\phi\geq|\nabla u|^2
	\end{align*}
for the sixth inequality.

Then the inequality \eqref{ineq reduced by log-concave} can be simplified into
\begin{align}
    & \int_X (2p+1-a) u^{2p-1} \cdot |\nabla^\phi u|_\phi^2 \cdot e^{2a\log\vv} \omega_\phi^n 
    +\frac{ab}{2} \int_X u^{2p+1} \tr_\phi \omega e^{2a\log\vv} \omega_\phi^n \nonumber \\
    \leq& (n+1)ab \int_X u^{2p+1} e^{2a\log\vv}\omega_\phi^n 
    + C \int_X u^{2p} e^{-a(F+\log\vv+\delta f_* +b\phi)} e^{2a\log\vv} \omega_\phi^n \nonumber \\
    & -a(1-\delta) \int_X u^{2p+1} \Delta_\phi f_* e^{2a\log\vv} \omega_\phi^n
    + \frac{p^2}{a-1} \int_X u^{2p-1} |\nabla_\phi u|_\phi^2 e^{2a\log\vv} \omega_\phi^n +II.
    \label{interm. inequ involve II}
\end{align}
Note that
\begin{align}
	&II-a(1-\delta) \int_X u^{2p+1} \Delta_\phi f_* e^{2a\log\vv} \omega_\phi^n
	  + C \int_X u^{2p} e^{-a(F+\log\vv+\delta f_* +b\phi)} e^{2a\log\vv} \omega_\phi^n \nonumber \\
	=&-\int_X u^{2p} e^{-a(F+\log\vv+\delta f_* +b\phi)} \Big( \frac{a\delta\Delta f_*}{1-a} + \frac{ab\Delta \phi}{1-a} \Big) e^{2a\log\vv} \omega_\phi^n \nonumber \\
	&-a(1-\delta) \int_X u^{2p+1} \Delta_\phi f_* e^{2a\log\vv} \omega_\phi^n+ C \int_X u^{2p} e^{-a(F+\log\vv+\delta f_* +b\phi)} e^{2a\log\vv} \omega_\phi^n \nonumber \\
	\leq& \int_X u^{2p} e^{-a(F+\log\vv+\delta f_* +b\phi)} \Big( C - \frac{ab}{1-a} \cdot \Delta\phi \Big) e^{2a\log\vv} \omega_\phi^n \nonumber \\
    &+ \int_X u^{2p} e^{-a(F+\log\vv+\delta f_* +b\phi)} \Big( -a(1-\delta)\Delta_\phi f_* \cdot (n+\Delta\phi) - \frac{a\delta}{1-a} \cdot \Delta f_* \Big) e^{2a\log\vv} \omega_\phi^n \nonumber \\
    \leq&C b \int_X u^{2p+1} e^{2a\log\vv} \omega_\phi^n
    +C\int_X u^{2p+1} \tr_\phi \omega e^{2a\log\vv} \omega_\phi^n
    \label{handle term II}
\end{align}
where we have used the following facts 
\begin{align*}
    C-\frac{ab}{1-a}\Delta \phi &\leq C + \frac{ab}{a-1} (n+\Delta\phi) -\frac{abn}{a-1} \leq C b (n+\Delta\phi),
\end{align*}
since $b$ is sufficiently large, and  
\begin{align*}
     & -a(1-\delta)\Delta_\phi f_* (n+\Delta\phi) + \frac{a}{a-1} \cdot \delta \cdot \Delta f_* =
     -\Delta_\phi f_* \cdot (n+\Delta\phi) + \Delta f_* \\
     =& -\sum_{i\neq j} \frac{(f_*)_{i\bar{i}}(1+\phi_{j\bar{j}})}{1+\phi_{i\bar{i}}} \leq (\frac{1-t}{t})\sum_{i\neq j}\frac{\theta_{i\bar{i}}(1+\phi_{j\bar{j}})}{1+\phi_{i\bar{i}}} \leq C\cdot\tr_\phi\omega\cdot (n+\Delta\phi),
\end{align*}
where we let $\delta = \frac{a-1}{a}$.

Choose $a=p+1$, then $(2p+1-a)-\frac{p^2}{a-1} =0$. 
By \eqref{interm. inequ involve II}, \eqref{handle term II} and choosing $b$ sufficiently large,
we have
\begin{align*}
    \frac{ab}{8} \int_X u^{2p+1} \tr_\phi \omega e^{2a\log\vv} \omega_\phi^n 
    \leq C' b \int_X u^{2p+1} e^{2a\log\vv} \omega_\phi^n.
\end{align*}
By 
\begin{align*}
		\tr\omega_\phi
    \leq \frac{1}{(n-1)!}(\tr_\phi\omega)^{n-1}\frac{\omega_\phi^n}{\omega^n},
	\end{align*}
we obtain
\begin{align*}
    \int_X u^{2p+1} (n+\Delta\phi)^{\frac{1}{n-1}} e^{\frac{-1}{n-1}(F-\log\vv)} e^{2a\log\vv} \omega_\phi^n 
    \leq C \int_X u^{2p+1} \omega_\phi^n
\end{align*}
 
It follows that
\begin{align*}
    & \int_X e^{(-a(2p+1) +\frac{n-2}{n-1})F - (2p+1)(a-1)f_* - ab(2p+1)\phi + (2a+\frac{1}{n-1}-a(2p+1))\log\vv} (n+\Delta\phi)^{2p+1+\frac{1}{n-1}} \omega^n \\
    &\leq C \int_X e^{(1-a(2p+1))F -a(2p+1)\log\vv -(2p+1)(a-1)f_* -ab(2p+1) \phi} (n+\Delta \phi)^{2p+1} \omega^n.
\end{align*}
Use $\|\phi\|_{C^0}, \|F+f_*\|_{C^0}, |\log\vv| \leq C$, then we derive
\begin{align*}
    \int_X e^{(2p+1-\frac{n-2}{n-1})f_*} (n+\Delta\phi)^{2p+1+\frac{1}{n-1}} \omega^n
    \leq C \int_X e^{2pf_*} (n+\Delta\phi)^{2p+1} \omega^n.
\end{align*}
Take $p=k\cdot\frac{1}{n-1}$, then the inductive argument concludes the proof.
 
\end{proof}

\subsection{Gradient estimate of \texorpdfstring{$F-\log\vv+f_*$}{} }

\begin{thm}
\label{gradient of F}
	Assume $\vv$ is log-concave. Let $\phi$ be a solution of \eqref{csck-mt},  then
	\begin{align*}
		\sup_X(|\nabla^\phi (F-\log\vv(m_\phi)+f_*)|_\phi)\leq C,
	\end{align*}
	where $C$ is a constant depending on $\omega$, $[\theta]$, $\vv$, $\ww$, $\|\phi\|_{C^0}$, $\|F+f_*\|_{C^0}$, $\|\tr\omega_\phi\|_{L^p}$ for some $p$.

 Furthermore, for any $p>0$, with a constant $C_p$ that depends on the same parameters above,
 \begin{align}
     \int_X |\nabla^\phi (F+f_*)|_\phi^P \omega^n < C_p.
 \end{align}
\end{thm}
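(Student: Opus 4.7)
The plan is to follow Chen--Cheng's strategy \cite{CC21b}, with new choices of test function and integration weight suited to the weighted setting. Let $G := F - \log\vv(m_\phi) + f_*$ and $v := |\nabla^\phi G|_\phi^2$. Rewriting the second equation of \eqref{csck-mt} as $\Delta_\phi(F+f_*) = \tilde f_t + \tr_\phi \eta_t + \frac{\vv_{,\alpha}}{\vv}J\xi^\alpha(F+f_*)$ and combining with \eqref{Lap of logv(m_phi) and Jxi(F')}, a direct calculation yields
\begin{align*}
\Delta_\phi G = h_t + \tr_\phi \eta_t + 2\frac{\vv_{,\alpha}}{\vv}J\xi^\alpha G + \left(2\frac{\vv_{,\alpha}\vv_{,\beta}}{\vv^2} - \frac{\vv_{,\alpha\beta}}{\vv}\right)\langle\xi^\alpha,\xi^\beta\rangle_\phi,
\end{align*}
where $h_t$ is uniformly bounded (the term $\frac{\vv_{,\alpha}}{\vv}J\xi^\alpha f_*$ being bounded by Remark \ref{remark for xi(f_*)}). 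Crucially, no $\Delta_\phi f_*$ term appears on the right-hand side, and by \eqref{upper bound of xi pair} the last term is controlled in absolute value by $A_{\vv,\xi}\tr_\phi\omega$.

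Applying the Bochner--Kodaira formula
\begin{align*}
\Delta_\phi v \geq |\nabla^\phi\bar\nabla^\phi G|_\phi^2 + 2\,\mathrm{Re}\langle\nabla^\phi G, \nabla^\phi \Delta_\phi G\rangle_\phi + \Ric(\omega_\phi)(\nabla^\phi G, \overline{\nabla^\phi G}),
\end{align*}
and writing $\Ric(\omega_\phi) = \Ric(\omega) - \ddbar G + \ddbar f_*$ with $\frac{1-t}{t}\theta + \ddbar f_* \geq 0$, the problematic Hessian contribution $-(\ddbar G)(\nabla^\phi G, \overline{\nabla^\phi G})$ from the Ricci term, together with the Hessian of $G$ produced by differentiating the first-order term $2\frac{\vv_{,\alpha}}{\vv}J\xi^\alpha G$, can both be absorbed via Cauchy--Schwarz into the positive square $|\nabla^\phi\bar\nabla^\phi G|_\phi^2$. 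The third-order derivatives of $\phi$ arising from $\nabla^\phi \tr_\phi \eta_t$ are handled by integration by parts in the spirit of \cite{CC21b}, producing terms controlled by $\tr_\phi\omega \cdot \tr\omega_\phi$.

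I would then take the test function $u = (v+K)\,e^{-\lambda G - \mu \phi}$ with large positive constants $\lambda,\mu,K$, multiply by $u^{2p-1}$, and integrate against the weighted measure $e^{2a\log\vv(m_\phi)}\omega_\phi^n$, mirroring the proof of Theorem \ref{integral C^2}. The log-concavity of $\vv$ (see \eqref{log-concavity}) again provides the sign-correctness of the weighted Hessian terms after integration by parts. The $L^q$-integrability of $\tr\omega_\phi$ from Theorem \ref{integral C^2} combined with the integrability $\int_X e^{-p f_*}\omega^n < C$ then closes a Moser-type iteration and yields the pointwise bound $\sup_X |\nabla^\phi G|_\phi \leq C$.

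The integral estimate on $|\nabla^\phi(F+f_*)|_\phi$ follows immediately: $|\nabla^\phi(F+f_*)|_\phi \leq |\nabla^\phi G|_\phi + |\nabla^\phi \log\vv(m_\phi)|_\phi$, and by \eqref{gradient of logv(m_phi)} and \eqref{upper bound of xi pair} the second term is pointwise bounded by $(A_{\vv,\xi}\tr\omega_\phi)^{1/2}$, whose $p$-th power is $\omega^n$-integrable for every $p$ by Theorem \ref{integral C^2}. The chief obstacle is the first-order term $\frac{\vv_{,\alpha}}{\vv}J\xi^\alpha G$ appearing in $\Delta_\phi G$, which upon differentiation in the Bochner formula generates third-order derivatives of $\phi$; accommodating these forces both the nonstandard integration weight $e^{2a\log\vv(m_\phi)}\omega_\phi^n$ and the essential use of the log-concavity assumption on $\vv$.
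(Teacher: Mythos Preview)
Your overall strategy—Bochner formula for $|\nabla^\phi G|_\phi^2$, integration against a weighted volume form, and Moser iteration closed by the $L^p$ bound on $\tr\omega_\phi$—matches the paper's. Two specifics, however, diverge.

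First, the paper's test function is $u = e^{W/2-\log\vv(m_\phi)}|\nabla^\phi W|_\phi^2 + 1$ (with $W=G$ in your notation), not $(v+K)e^{-\lambda G-\mu\phi}$. The \emph{positive} exponent $W/2$ is the standard Chen--Cheng choice: it produces the quartic term $\tfrac14|\nabla^\phi W|_\phi^4$ with the sign needed to complete the square against the Ricci--Hessian cross term $-W_{i\bar j}W_{\bar i}W_j$; a large negative exponent $-\lambda G$ does not obviously deliver this, and the extra factor $e^{-\mu\phi}$ plays no role in the paper's argument. The genuinely new ingredient is the factor $e^{-\log\vv}$: combined with the integration weight $e^{2\log\vv}\omega_\phi^n$, it arranges an \emph{exact cancellation} of the drift contribution $2\frac{\vv_{,\alpha}}{\vv}J\xi^\alpha(W)\,u$ in $\Delta_\phi u$ against the corresponding term generated by integration by parts on $\nabla^\phi W\cdot_\phi\nabla^\phi\Delta_\phi W$ (compare \eqref{integ of Laplace u modified} with \eqref{2nd estimate of integ by parts modified}). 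That algebraic cancellation, not a sign condition, is what tames the first-order drift.

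Second, your claim that log-concavity is essential in this step is incorrect. The paper states explicitly (in the remark immediately following the theorem) that the proof of the gradient estimate uses log-concavity only indirectly, via the $W^{2,p}$-estimate it cites. Inside the argument the Hessian-type terms $\big(\frac{\vv_{,\alpha\beta}}{\vv}-\frac{\vv_{,\alpha}\vv_{,\beta}}{\vv^2}\big)\langle\xi^\alpha,\xi^\beta\rangle_\phi$ are simply bounded in absolute value by $C\,\tr\omega_\phi$ via \eqref{upper bound of xi pair}; no sign is invoked.

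Your final paragraph deducing the $L^p$ bound on $|\nabla^\phi(F+f_*)|_\phi$ from the pointwise bound on $|\nabla^\phi G|_\phi$ and \eqref{gradient of logv(m_phi)} is correct and agrees with the paper.
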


\begin{remark}
    We want to emphasize that, the proof of Theorem \ref{gradient of F} only depends on the $C^0$-estimate Theorem \ref{C^0-estimate} and $W^{2,p}$-estimate Theorem \ref{integral C^2}, but not on the log-concavity assumption. We put the log-concavity assumption in the statement of Theorem \ref{gradient of F} only for its compatibility with our main result.
\end{remark}
	
\begin{proof}
Let $W = F'+f_* = F-\log\vv + f_*$.	

Let's begin by reviewing some preliminaries that are needed along the proof.
We have the following formulas and inequalities:
\begin{align*}
		\tr\omega_\phi
		\geq ne^{\frac{F'}{n}}
		\geq C>0,
	\end{align*}
which is due to the arithmetic-geometric inequality and the lower bound of $F'$,
\begin{align*}
    \Ric(\phi)_{i\bar{j}} 
    &=\Ric_{i\bar{j}}-F'_{i\bar{j}} 
    =\Ric_{i\bar{j}}-W_{i\bar{j}}+(f_*)_{i\bar{j}},\\
    \tr_\phi \omega &\leq n^{-1} e^{-F'} (\tr\omega_\phi)^{n-1}
    \leq C(\tr\omega_\phi)^{n-1} ,\\
    |\Ric(\omega)(\nabla^\phi W,\nabla^\phi W)| &\leq C \cdot e^{-F'} \cdot (\tr \omega_\phi)^{n-1}\cdot |\nabla^\phi W|_\phi^2,
\end{align*}
\begin{align*}
    \Ric(\phi)_{i\bar{j}}W_{\bar{i}}W_{j} &= \Ric(\omega)(\nabla^\phi W,\nabla^\phi W) - \frac{W_{i\bar{j}}W_{\bar{i}}W_{j}}{(1+\phi_{i\bar{i}})(1+\phi_{j\bar{j}})} + \frac{(f_*)_{i\bar{j}}W_{\bar{i}}W_{j}}{(1+\phi_{i\bar{i}})(1+\phi_{j\bar{j}})} \\
    &\geq -C e^{-F'}(\tr \omega_\phi)^{n-1} |\nabla^\phi W|_\phi^2 - \frac{W_{i\bar{j}}W_{\bar{i}}W_{j}}{(1+\phi_{i\bar{i}})(1+\phi_{j\bar{j}})} .
\end{align*}
Since $f_*$ is a $(\frac{1-t}{t})\theta$-psh function , $\frac{\vv_\alpha}{\vv} J \xi^\alpha (f_*)$ is determined by class $[\theta]$, which is bounded. And by Theorem \ref{C^0-estimate}, $|W|_{C^0}$ is bounded.
With the preliminaries in mind and under an orthonormal
frame of $g_\phi$, using Bochner formula (see the proof of \cite[Proposition 4.1]{CC21a}) implies
\begin{align}
    e^{\frac{-W}{2}} \Delta_\phi (e^{\frac{W}{2}} |\nabla^\phi W|_\phi^2) 
    \geq& (\Delta_\phi W)_i W_{\bar{i}} 
    +(\Delta_\phi W)_{\bar{i}} W_i + \Ric(\phi)_{i\bar{j}} W_{\bar{i}} W_{j} + |W_{i\bar{j}}|_\phi^2
                       +|W_{ij}|_\phi^2 \nonumber \\
    & +\frac{1}{4} |\nabla^\phi W|_\phi^4 + \frac{1}{2}(W_iW_{\bar{j}}W_{\bar{i}j} + W_{\bar{i}}W_{j}W_{i\bar{j}}+W_iW_jW_{\bar{i}\bar{j}}+W_{\bar{i}}W_{\bar{j}}W_{ij}) \nonumber \\
    & + \frac{1}{2} \Delta_\phi W \cdot |\nabla^\phi W|_\phi^2 \nonumber \\
    \geq& (\Delta_\phi W)_i W_{\bar{i}} + (\Delta_\phi W)_{\bar{i}}W_i + |W_{i\bar{j}}|_\phi^2 \nonumber \\
    & + \frac{1}{2}\Delta_\phi W \cdot |\nabla^\phi W|_\phi^2 - C e^{-F'}(\tr \omega_\phi)^{n-1}\cdot |\nabla^\phi W|_\phi^2 \nonumber \\
    \geq& 2\nabla^\phi W \cdot_\phi \nabla^\phi \Delta_\phi W + \frac{|W_{i\bar{j}}|^2}{(1+\phi_{i\bar{i}})(1+\phi_{j\bar{j}})} - C e^{-F'}(\tr \omega_\phi)^{n-1}\cdot |\nabla^\phi W|_\phi^2 \nonumber \\
    & + \frac{1}{2}\big( -C - C e^{-F'}(\tr \omega_\phi)^{n-1} + 2\frac{\vv_\alpha}{\vv} J\xi^\alpha (W) \Big) |\nabla^\phi W|_\phi^2 \nonumber \\
    \geq& 2\nabla^\phi W \cdot_\phi \nabla^\phi \Delta_\phi W + \frac{|W_{i\bar{j}}|^2}{(1+\phi_{i\bar{i}})(1+\phi_{j\bar{j}})} \nonumber \\
    & -C(1 + e^{-F'}(\tr \omega_\phi)^{n-1})\cdot |\nabla^\phi W|_\phi^2 
    + \frac{\vv_\alpha}{\vv} J \xi^\alpha (W) |\nabla^\phi W|_\phi^2.
    \label{Lap of e^W and gradient of W}
\end{align}

Note that \eqref{csck-mt} and \eqref{Lap of logv(m_phi) and Jxi(F')},
\begin{align}
    \Delta_\phi W =& \Delta_\phi(F+f_*)-\Delta_\phi \log\vv \nonumber \\ 
    =& \frac{\vv_{,\alpha}}{\vv}J\xi^\alpha(F+f_*-\log\vv)
    +\frac{\vv_{,\alpha}}{\vv}J\xi^\alpha\log\vv-\Delta_\phi\log\vv
    +\tilde{f}_t+\tr_\phi\eta_t \nonumber  \\
    =&\frac{\vv_{,\alpha}}{\vv}J\xi^\alpha(W)+\frac{\vv_{,\alpha}}{\vv}J\xi^\alpha(F'+f_*)-\frac{\vv_{,\alpha}}{\vv}J\xi^\alpha(f_*) 
    +\left(\frac{\vv_{,\alpha}\vv_{,\beta}}{\vv^2}-\frac{\vv_{,\alpha\beta}}{\vv}\right)\langle\xi^\alpha,\xi^\beta\rangle_\phi
     \nonumber \\
      &-\frac{\vv_{,\alpha}(m_\phi)}{\vv(m_\phi)}\Delta_\omega(m_\omega^{\xi^\alpha})
      -\frac{\vv_{,\alpha}\vv_{,\beta}}{\vv^2}\langle\xi^\alpha,\xi^\beta\rangle_\phi   
      +\tilde f_t+\tr_\phi\eta_t \nonumber, 
\end{align}
which together with   
\eqref{upper bound of xi pair} implies
\begin{align}
\label{Laplace W}
      2 \frac{\vv_\alpha}{\vv}J\xi^\alpha(W) - C -C \tr_\phi\omega -C\tr\omega_\phi \leq \Delta_\phi W \leq 2 \frac{\vv_\alpha}{\vv}J\xi^\alpha(W) + C +C \tr_\phi\omega +C\tr\omega_\phi 
\end{align}

To control the derivative term $J\xi^\alpha(W)|\nabla^\phi W|_\phi^2$, we need to add another barrier function.
	One computes
	\begin{align}
		&\Delta_\phi(e^{-\log\vv(m_\phi)} \cdot e^{\frac{W}{2}}|\nabla^\phi W|_\phi^2)
		 \nonumber \\
		=&g_\phi^{i\bar j}\p_i\left(e^{-\log\vv(m_\phi)}(-(\log\vv(m_\phi))_{\bar j})(e^{\frac{W}{2}}|\nabla^\phi W|_\phi^2)
		 +e^{-\log\vv(m_\phi)}\p_{\bar j}(e^{\frac{W}{2}}|\nabla^\phi W|_\phi^2) \right) \nonumber \\
		=&e^{\frac{W}{2}-\log\vv(m_\phi)}|\nabla^\phi W|_\phi^2|\nabla^\phi\log\vv(m_\phi)|_\phi^2
		 +e^{-\log\vv(m_\phi)}(-\Delta_\phi\log\vv(m_\phi))e^{\frac{W}{2}}|\nabla^\phi W|_\phi^2 \nonumber \\
		&-g_\phi^{i\bar j}e^{-\log\vv(m_\phi)}(\log\vv(m_\phi))_{\bar j}(e^{\frac{W}{2}}|\nabla^\phi W|_\phi^2)_i 
		  -g_\phi^{i\bar j}e^{-\log\vv(m_\phi)}(\log\vv(m_\phi))_{i}(e^{\frac{W}{2}}|\nabla^\phi W|_\phi^2)_{\bar j} \nonumber \\
		&+e^{-\log\vv(m_\phi)}\Delta_\phi(e^{\frac{W}{2}}|\nabla^\phi W|_\phi^2) \nonumber \\
		=&e^{\frac{W}{2}-\log\vv(m_\phi)}|\nabla^\phi W|_\phi^2|\nabla^\phi\log\vv(m_\phi)|_\phi^2
		 -2e^{-\log\vv(m_\phi)}\nabla^\phi\log\vv(m_\phi)\cdot_\phi\nabla^\phi(e^{\frac{W}{2}}|\nabla^\phi W|_\phi^2) \nonumber \\
		 &+e^{-\log\vv(m_\phi)}\left(\left(\frac{\vv_{,\alpha}(m_\phi)}{\vv(m_\phi)}J\xi^\alpha(W)-\frac{\vv_{,\alpha}(m_\phi)}{\vv(\phi)}J\xi^\alpha(f_*) -\frac{\vv_{,\alpha}(m_\phi)}{\vv(m_\phi)}\Delta_\omega(m_\omega^{\xi^\alpha})
          \right. \right.  \nonumber \\
	     & \left.\left.	     
	      -\frac{\vv_{,\alpha\beta}(m_\phi)}{\vv(m_\phi)}\langle\xi^\alpha,\xi^\beta\rangle_\phi+\frac{\vv_{,\alpha}(m_\phi)\vv_{,\beta}(m_\phi)}{\vv(m_\phi)^2}\langle\xi^\alpha,\xi^\beta\rangle_\phi\right)e^{\frac{W}{2}}|\nabla^\phi W|_\phi^2+\Delta_\phi(e^{\frac{W}{2}}|\nabla^\phi W|_\phi^2)\right) \nonumber \\
		 \geq&e^{\frac{W}{2}-\log\vv(m_\phi)}|\nabla^\phi W|_\phi^2|\nabla^\phi\log\vv(m_\phi)|_\phi^2
		 -2e^{-\log\vv(m_\phi)}\nabla^\phi\log\vv(m_\phi)\cdot_\phi\nabla^\phi(e^{\frac{W}{2}}|\nabla^\phi W|_\phi^2) \nonumber \\
		 &+e^{\frac{W}{2}-\log\vv(m_\phi)}2\nabla^\phi W\cdot_\phi\nabla^\phi\Delta_\phi W -C_{\vv,\xi}(1+(\tr\omega_\phi)^{n-1})e^{\frac{W}{2}-\log\vv(m_\phi)}|\nabla^\phi W|_\phi^2 \nonumber \\
		 &+\frac{|W_{i\bar j}|^2}{(1+\phi_{i\bar i})(1+\phi_{j\bar j})}e^{\frac{F'}{2}-\log\vv(m_\phi)} 
		 +2\frac{\vv_{,\alpha}(m_\phi)}{\vv(m_\phi)}J\xi^\alpha(W)e^{\frac{W}{2}-\log\vv(m_\phi)}|\nabla^\phi W|_\phi^2,
		 \label{Lap for the new barrier modified}
	\end{align}
	where we use \eqref{Lap of logv(m_phi) and Jxi(F')} for the third equality, \eqref{upper bound of xi pair} and \eqref{Lap of e^W and gradient of W} for the last inequality.

Set
	\begin{align*}
		u=e^{\frac{W}{2}-\log\vv(m_\phi)}|\nabla^\phi W|_\phi^2+1,
	\end{align*}
	then we have
	\begin{align}
		\Delta_\phi u 
		\geq&-C_{\vv,\xi}(\tr\omega_\phi)^{n-1}u +2e^{\frac{W}{2}-\log\vv(m_\phi)}\nabla^\phi W\cdot_\phi\nabla^\phi\Delta_\phi W 
		   +2\frac{\vv_{,\alpha}(m_\phi)}{\vv(m_\phi)}J\xi^\alpha(W)u
		    \nonumber \\
		&-2\frac{\vv_{,\alpha}}{\vv}J\xi^\alpha(W)+e^{\frac{W}{2}-\log\vv(m_\phi)}|\nabla^\phi W|_\phi^2|\nabla^\phi\log\vv(m_\phi)|_\phi^2 \nonumber \\
		 &-2e^{-\log\vv(m_\phi)}\nabla^\phi\log\vv(m_\phi)\cdot_\phi\nabla^\phi(e^{\frac{W}{2}}|\nabla^\phi W|_\phi^2).
		 \label{Laplace u modified}
	\end{align}
	For any $p>0$, 
	\begin{align*}
		\frac{1}{2p+1}\Delta_\phi(u^{2p+1})
		=u^{2p}\Delta_\phi u+2pu^{2p-1}|\nabla^\phi u|_\phi^2.
	\end{align*}
	Taking the integration with respect to the weighted measure $e^{2\log\vv(m_\phi)}\omega_\phi^n$, one has
	\begin{align}
	\label{integ formula initial modified}
		&\int_X2pu^{2p-1}|\nabla^\phi u|_\phi^2e^{2\log\vv(m_\phi)}\omega_\phi^n \nonumber \\
		=&\int_Xu^{2p}(-\Delta_\phi u)e^{2\log\vv(m_\phi)}\omega_\phi^n
		+\frac{1}{2p+1}\int_X\Delta_\phi(u^{2p+1})e^{2\log\vv(m_\phi)}\omega_\phi^n. 
	\end{align}
	On the one hand,
	\begin{align}
	\label{integ of u^{2p+1} modified}
		&\frac{1}{2p+1}\int_X\Delta_\phi(u^{2p+1})e^{2\log\vv(m_\phi)}\omega_\phi^n \nonumber \\
		=&-\frac{1}{2p+1}\int_X\nabla^\phi(u^{2p+1})\cdot_\phi\nabla^\phi e^{2\log\vv(m_\phi)}\omega_\phi^n \nonumber \\
		=&-\int_X2u^{2p}\nabla^\phi(e^{-\log\vv(m_\phi)}e^{\frac{W}{2}}|\nabla^\phi W|_\phi^2)\cdot_\phi\nabla^\phi\log\vv(m_\phi) e^{2\log\vv(m_\phi)}\omega_\phi^n \nonumber \\
		=&-2\int_Xu^{2p}\nabla^\phi(e^{\frac{W}{2}}|\nabla^\phi W|_\phi^2)\cdot_\phi\nabla^\phi\log\vv(m_\phi)e^{\log\vv(m_\phi)}\omega_\phi^n \nonumber \\
		&+2\int_Xu^{2p}e^{\frac{W}{2}}|\nabla^\phi W|_\phi^2|\nabla^\phi\log\vv(m_\phi)|_\phi^2e^{\log\vv(m_\phi)}\omega_\phi^n.  
	\end{align}
	On the other hand, by \eqref{Laplace u modified},
	\begin{align}
		&\int_Xu^{2p}(-\Delta_\phi u)e^{2\log\vv(m_\phi)}\omega_\phi^n \nonumber \\
		\leq&C\int_Xu^{2p+1}(\tr\omega_\phi)^{n-1}e^{2\log\vv(m_\phi)}\omega_\phi^n
		-2\int_Xu^{2p+1}\frac{\vv_{,\alpha}(m_\phi)}{\vv(m_\phi)}J\xi^\alpha(W)e^{2\log\vv(m_\phi)}\omega_\phi^n \nonumber \\
		&-\int_Xu^{2p} e^{\frac{W}{2}}|\nabla^\phi W|_\phi^2|\nabla^\phi\log\vv(m_\phi)|_\phi^2e^{\log\vv(m_\phi)}\omega_\phi^n  \nonumber \\
		 &+2\int_Xu^{2p} \nabla^\phi\log\vv(m_\phi)\cdot_\phi\nabla^\phi(e^{\frac{W}{2}}|\nabla^\phi W|_\phi^2)e^{\log\vv(m_\phi)}\omega_\phi^n \nonumber \\
		 &-2\int_Xu^{2p} e^{\frac{W}{2}}\nabla^\phi W\cdot_\phi\nabla^\phi\Delta_\phi W e^{\log\vv(m_\phi)}\omega_\phi^n.
		 \label{integ of Laplace u modified}
	\end{align}
    where we have used 
    \begin{align}
    \label{upperbound of xi(W)}
            |J\xi^\alpha(W)|
            \leq C|\nabla W|
            \leq C(\tr\omega_\phi)^{\frac{1}{2}}|\nabla^\phi W|_\phi
            \leq C(\tr\omega_\phi)^{\frac{1}{2}}C\sqrt{u} 
         \leq C(\tr\omega_\phi)^{\frac{1}{2}}u
         \leq C(\tr\omega_\phi)^{n-1}u.
        \end{align}
 
    By \eqref{integ of u^{2p+1} modified} and \eqref{integ of Laplace u modified}, we have
    \begin{align}
    \label{middle step terms}
       &\int_X2pu^{2p-1}|\nabla^\phi u|_\phi^2e^{2\log\vv(m_\phi)}\omega_\phi^n \nonumber \\
       \leq&C\int_Xu^{2p+1}(\tr\omega_\phi)^{n-1}e^{2\log\vv(m_\phi)}\omega_\phi^n 
       -2\int_Xu^{2p+1}\frac{\vv_{,\alpha}(m_\phi)}{\vv(m_\phi)}J\xi^\alpha(W)e^{2\log\vv(m_\phi)}\omega_\phi^n \nonumber \\
       &-2\int_Xu^{2p} e^{\frac{W}{2}}\nabla^\phi W\cdot_\phi\nabla^\phi\Delta_\phi W e^{\log\vv(m_\phi)}\omega_\phi^n.
    \end{align}
 
	Using the integration by parts,
	\begin{align}
		&-2\int_Xu^{2p} e^{\frac{W}{2}}\nabla^\phi W\cdot_\phi\nabla^\phi\Delta_\phi W e^{\log\vv(m_\phi)}\omega_\phi^n \nonumber \\
		=&\int_X4pu^{2p-1}e^{\frac{W}{2}}\Delta_\phi W(\nabla^\phi W\cdot_\phi\nabla^\phi u) e^{\log\vv(m_\phi)}\omega_\phi^n 
		+\int_X2u^{2p}e^{\frac{W}{2}}(\Delta_\phi W)^2e^{\log\vv(m_\phi)}\omega_\phi^n \nonumber \\
		&+\int u^{2p}e^{\frac{W}{2}}|\nabla^\phi W|_\phi^2(\Delta_\phi W)e^{\log\vv(m_\phi)}\omega_\phi^n \nonumber \\
		&+2\int_Xu^{2p}e^{\frac{W}{2}}(\Delta_\phi W)(\nabla^\phi W\cdot_\phi\nabla^\phi\log\vv(m_\phi))e^{\log\vv(m_\phi)}\omega_\phi^n. 
    \label{integ by part in CC21 modified}
	\end{align}
	We estimate the four terms on the right side of \eqref{integ by part in CC21 modified}.
	First,
	\begin{align}
		&\int_X4pu^{2p-1}e^{\frac{W}{2}}\Delta_\phi W(\nabla^\phi W\cdot_\phi\nabla^\phi u) e^{\log\vv(m_\phi)}\omega_\phi^n \nonumber \\
		\leq& \int_Xpu^{2p-1}|\nabla^\phi u|_\phi^2e^{2\log\vv(m_\phi)}\omega_\phi^n 
		+\int_X4pu^{2p-1}e^{W}(\Delta_\phi W)^2|\nabla^\phi W|_\phi^2\omega_\phi^n \nonumber \\
		\leq& \int_Xpu^{2p-1}|\nabla^\phi u|_\phi^2e^{2\log\vv(m_\phi)}\omega_\phi^n 
		+\int_X4pu^{2p}e^{\frac{W}{2}}(\Delta_\phi W)^2e^{\log\vv(m_\phi)}\omega_\phi^n.
		\label{1st estimate of integ by part modified}
	\end{align}
	By formula \eqref{Laplace W},
	\begin{align}
		&\int u^{2p}e^{\frac{W}{2}}|\nabla^\phi W|_\phi^2(\Delta_\phi W)e^{\log\vv(m_\phi)}\omega_\phi^n \nonumber \\
		\leq&\int u^{2p}e^{\frac{W}{2}}|\nabla^\phi W|_\phi^2\left(C+C\tr_\phi\omega+C\tr\omega_\phi+2\frac{\vv_{,\alpha}(m_\phi)}{\vv(m_\phi)}J\xi^\alpha(W)\right)e^{\log\vv(m_\phi)}\omega_\phi^n \nonumber \\
		=&2\int_Xu^{2p+1}\frac{\vv_{,\alpha}(m_\phi)}{\vv(m_\phi)}J\xi^\alpha(W)e^{2\log\vv(m_\phi)}\omega_\phi^n
		-2\int_Xu^{2p}\frac{\vv_{,\alpha}(m_\phi)}{\vv(m_\phi)}J\xi^\alpha(W)e^{2\log\vv(m_\phi)}\omega_\phi^n
		  \nonumber \\
		&
		 +\int u^{2p}e^{\frac{W}{2}}|\nabla^\phi W|_\phi^2(C+C\tr_\phi(\omega)+C\tr\omega_\phi)e^{\log\vv(m_\phi)}\omega_\phi^n
         \nonumber \\
		\leq& 2\int_Xu^{2p+1}\frac{\vv_{,\alpha}(m_\phi)}{\vv(m_\phi)}J\xi^\alpha(W)e^{2\log\vv(m_\phi)}\omega_\phi^n
		+C_{\vv,\xi}\int_Xu^{2p+1}(\tr\omega_\phi)^{n-1} e^{2\log\vv(m_\phi)}\omega_\phi^n,
		\label{2nd estimate of integ by parts modified}
	\end{align}
        where we have used \eqref{upperbound of xi(W)}
        for the last inequality above.
	Again by formula \eqref{Laplace W},
	\begin{align*}
		|\Delta_\phi W|\leq C+B_{\vv,\xi}(\tr\omega_\phi)^{n-1} +2\left|\frac{\vv_{,\alpha}(m_\phi)}{\vv(m_\phi)} \right||J\xi^\alpha(W)|
		\leq& B_{\vv,\xi}(\tr\omega_\phi)^{n-1}+C_\vv|\nabla W| \\
		\leq& B_{\vv,\xi}(\tr\omega_\phi)^{n-1}+C_\vv(\tr\omega_\phi)^{\frac{1}{2}}|\nabla^\phi W|_\phi.
	\end{align*}
	where $B_{\vv,\xi}$ is a positive constant such that
	\begin{align*}
		|\tr_\phi\eta_t|+\left|\left(\frac{\vv_{,\alpha}(m_\phi)\vv_{\beta}(m_\phi)}{\vv(m_\phi)^2}-\frac{\vv_{,\alpha\beta}(m_\phi)}{\vv(m_\phi)}\right)\langle\xi^\alpha,\xi^\beta\rangle_\phi \right|
		\leq B_{\vv,\xi}(\tr\omega_\phi)^{n-1}.
	\end{align*}
	It follows that
	\begin{align}
	\label{3rd estimate of integ by parts modified}
		&\int_Xu^{2p}e^{\frac{W}{2}}(\Delta_\phi W)^2e^{\log\vv(m_\phi)}\omega_\phi^n \nonumber \\
		\leq&\int_Xu^{2p}e^{\frac{W}{2}}C\Big((\tr\omega_\phi)^{2n-2}+(\tr\omega_\phi)^{n-1}\sqrt{\tr\omega_\phi}|\nabla^\phi W|_\phi+\tr\omega_\phi|\nabla^\phi W|_\phi^2\Big)e^{\log\vv(m_\phi)}\omega_\phi^n \nonumber \\
		\leq&\int_XC(\tr\omega_\phi)^{2(n-1)}u^{2p+1}e^{2\log\vv(m_\phi)}\omega_\phi^n. 
	\end{align}
	By H\"older inequality,
	\begin{align}
		&2\int_Xu^{2p}e^{\frac{W}{2}}(\Delta_\phi W)(\nabla^\phi W\cdot_\phi\nabla^\phi\log\vv(m_\phi))e^{\log\vv(m_\phi)}\omega_\phi^n \nonumber \\
		\leq&\int_Xu^{2p}e^{\frac{W}{2}}(\Delta_\phi W)^2|\nabla^\phi\log\vv(m_\phi)|_\phi^2 e^{\log\vv(m_\phi)}\omega_\phi^n
		+\int_Xu^{2p}e^{\frac{W}{2}}|\nabla^\phi W|_\phi^2e^{\log\vv(m_\phi)}\omega_\phi^n \nonumber \\
		\leq&\int_XC(\tr\omega_\phi)^{3(n-1)}u^{2p+1}e^{2\log\vv(m_\phi)}\omega_\phi^n
		+\int_Xu^{2p+1}e^{2\log\vv(m_\phi)}\omega_\phi^n,
		\label{4th estimate of integ by parts modified}
	\end{align}
	where we have used \eqref{3rd estimate of integ by parts modified} and 
    \begin{align}
\label{upperbound of |nabla logv|}
		|\nabla^\phi\log\vv(m_\phi)|_\phi^2 
		=\frac{\vv_{,\alpha}(m_\phi)\vv_{,\beta}(m_{\phi})}{\vv(m_\phi)^2}\langle\xi^\beta,\xi^\alpha\rangle_\phi
		\leq C_\vv(1+\phi_{i\bar i})\xi_{\alpha,i}\xi_{\beta,\bar i}
		\leq C_{\vv,\xi}\tr\omega_\phi.
	\end{align}
	
	By \eqref{integ by part in CC21 modified}, \eqref{1st estimate of integ by part modified}, \eqref{2nd estimate of integ by parts modified}, \eqref{3rd estimate of integ by parts modified} and \eqref{4th estimate of integ by parts modified}, we obtain
	\begin{align}
    \label{W terms}
    \begin{split}
		&-2\int_Xu^{2p} e^{\frac{W}{2}}\nabla^\phi W\cdot_\phi\nabla^\phi\Delta_\phi W e^{\log\vv(m_\phi)}\omega_\phi^n \\
		\leq& \int_Xpu^{2p-1}|\nabla^\phi u|_\phi^2e^{2\log\vv(m_\phi)}\omega_\phi^n 
		+C(p+1)\int_X(\tr\omega_\phi)^{3(n-1)}u^{2p+1}e^{2\log\vv(m_\phi)}\omega_\phi^n \\
		&+2\int_Xu^{2p+1}\frac{\vv_{,\alpha}(m_\phi)}{\vv(m_\phi)}J\xi^\alpha(W)e^{2\log\vv(m_\phi)}\omega_\phi^n
		+\int_Xu^{2p+1}e^{2\log\vv(m_\phi)}\omega_\phi^n
  \end{split}
	\end{align}
	Therefore, by \eqref{middle step terms}, \eqref{W terms},  we deduce
	\begin{align*}
		\int_X p u^{2p-1}|\nabla^\phi u|_\phi^2e^{2\log\vv(m_\phi)}\omega_\phi^n 
		\leq
		C(p+1)\int_X(\tr\omega_\phi)^{3(n-1)}u^{2p+1}e^{\log\vv(m_\phi)}\omega_\phi^n.
	\end{align*}
	It follows that
	\begin{align*}
		\int_X|\nabla^\phi(u^{p+\frac{1}{2}})|_\phi^2\omega^n
		\leq\frac{C_{\vv,\xi}(p+\frac{1}{2})^2(p+1)}{p}\int_XC_\xi(\tr\omega_\phi)^{3(n-1)}u^{2p+1}\omega_\phi^n.
	\end{align*}
    In the inequality above, we use $\omega^n = e^{f_*}\cdot e^{-(F'+f_*)}\omega_\phi^n\leq C \omega_\phi^n$, since $F'+f_*$ is uniformly bounded, and $f_*\leq 0$.
 
	For any $0<\eps<2$, by H\"older inequality, we have
	\begin{align*}
		\int_X(\tr\omega_\phi)^{3(n-1)}u^{2p+1}\omega_\phi^n
        &=\int_X(\tr\omega_\phi)^{3(n-1)}u^{2p+1}e^{-f_*}e^{F'+f_*}\omega^n \\
		&\leq C\left(\int_Xu^{(p+\frac{1}{2})(2+\eps)}\omega^n\right)^{\frac{2}{2+\eps}} \left(\int_X(\tr\omega_\phi)^{3(n-1)\frac{2+\eps}{\eps}}e^{-\frac{2+\eps}{\eps}f_*}\omega^n \right)^{\frac{\eps}{2+\eps}}.
	\end{align*}
	Denote $v:=u^{p+\frac{1}{2}}$, then the above inequalities imply
	\begin{align*}
		\int_X|\nabla^\phi v|_\phi^2\omega^n
		\leq\frac{C_{\vv,\xi}(p+\frac{1}{2})^3}{p}
		\left(\int_Xv^{(2+\eps)}\omega^n\right)^{\frac{2}{2+\eps}} \left(\int_X(\tr\omega_\phi)^{3(n-1)\frac{2+\eps}{\eps}}e^{-\frac{2+\eps}{\eps}f_*}\omega^n \right)^{\frac{\eps}{2+\eps}}
	\end{align*}
	Similar with (4.26) in the proof of \cite[Proposition 4.1]{CC21a}, we also have
	\begin{align*}
		\left(\int_X|\nabla v|^{2-\eps}\omega^n\right)^{\frac{2}{2-\eps}}
		\leq& n^{\frac{\eps}{2-\eps}}\left(\int_X(\tr\omega_\phi)^{\frac{2-\eps}{\eps}}\omega^n \right)^{\frac{\eps}{2-\eps}} 
		   \int_X|\nabla^\phi v|_\phi^2\omega^n \\
		\leq& C_{\vv,\xi}p^2K_\eps\left(\int_Xv^{(2+\eps)}\omega^n\right)^{\frac{2}{2+\eps}}, 
	\end{align*}
	where
	\begin{align*}
		K_\eps:=n^\frac{\eps}{2-\eps}\left(\int_X(\tr\omega_\phi)^{\frac{2-\eps}{\eps}}\omega^n \right)^{\frac{\eps}{2-\eps}} \left(\int_X(\tr\omega_\phi)^{3(n-1)\frac{2+\eps}{\eps}}e^{-\frac{2+\eps}{\eps}f_*}\omega^n \right)^{\frac{\eps}{2+\eps}}
	\end{align*}
	is bounded, which can be seen by using Theorem \ref{integral C^2}, the assumption of $f_*$ and H\"older inequality.
	
	Applying the Sobolev inequality (\cite[Theorem 2.21]{Aub13})  with exponent $2-\eps$, and denote $\theta:=\frac{2n(2-\eps)}{2n-2+\eps}$ to be the index, we obtain
	\begin{align*}
		\|v\|_{L^\theta(\omega^n)}
		\leq C_s(\|\nabla v\|_{L^{2-\eps}(\omega^n)}+\|v\|_{L^{2-\eps}(\omega^n)}).
	\end{align*}
	It follows that
	\begin{align*}
		\left(\int_Xu^{(p+\frac{1}{2})\theta}\omega^n \right)^{\frac{1}{\theta}}
		\leq& C_s\left( \Big(\int_X|\nabla(u^{p+\frac{1}{2}})|^{2-\eps}\omega^n\Big)^{\frac{1}{2-\eps}} +\Big(\int_X u^{(p+\frac{1}{2})(2-\eps)}\omega^n\Big)^{\frac{1}{2-\eps}} \right) \\
		\leq&  C_s\left(C_{\vv,\xi}p\sqrt{K_\eps}\Big(\int_Xu^{(p+\frac{1}{2})(2+\eps)}\omega^n\Big)^{\frac{1}{2+\eps}} 
		 +\Big(\int_X u^{(p+\frac{1}{2})(2-\eps)}\omega^n\Big)^{\frac{1}{2-\eps}} \right) \\
		\leq& C_\eps p\left(\int_Xu^{(p+\frac{1}{2})(2+\eps)}\omega^n\right)^{\frac{1}{2+\eps}}. 
	\end{align*}
	One can choose $\eps>0$ sufficiently small such that $\theta>(2+\eps)$ (e.g. $\eps=\frac{1}{2n}$). 
	We deduce
	\begin{align*}
		\|u\|_{L^{(p+\frac{1}{2})\theta}}
		\leq (C_\eps p)^{\frac{1}{p+\frac{1}{2}}}\|u\|_{L^{(p+\frac{1}{2})(2+\eps)}}.
	\end{align*}
	Denote $\chi:=\frac{\theta}{2+\eps}>1$ and choose $p+\frac{1}{2}=\chi^i$, then we have
	\begin{align*}
		\|u\|_{L^{(2+\eps)\chi^{i+1}}}
		\leq C^{\frac{1}{\chi^i}}(\chi^i)^{\frac{1}{\chi^i}}\|u\|_{L^{(2+\eps)\chi^i}}.
	\end{align*}
	It follows that
	\begin{align*}
		\|u\|_{L^\infty}
		\leq C^{\sum_{i\geq0}\frac{1}{\chi^i}}\chi^{\sum_{i\geq0}\frac{i}{\chi^i}}
		  \|u\|_{L^{(2+\eps)\chi^i}}
	\end{align*}
	Then we
	\begin{align}
	\label{L^infty bound of u modified}
		\|u\|_{L^\infty}
		\leq C\|u\|_{L^{2+\eps}}
		\leq C\|u\|_{L^1}^{\frac{1}{2+\eps}}\|u\|_{L^\infty}^{\frac{1+\eps}{2+\eps}}.
	\end{align}
	Thus, by \eqref{L^infty bound of u modified}, Theorem \ref{integral C^2} and Lemma \ref{L^2 bound of gradient W squre modified}, we derive the $\|u\|_{L^\infty}$,
which implies $\sup_X |\nabla^\phi(F+f_*-\log(\vv(m_\phi)))|_\phi \leq C$.

Furthermore, since $|\nabla^\phi \log\vv |_\phi^2 \leq C\cdot \tr\omega_\phi$, by Theorem \ref{integral C^2}, we have
\begin{align}
     \int_X |\nabla^\phi (F+f_*)|_\phi^P \omega^n < C_p.
 \end{align}
\end{proof}

\begin{lemma}
	\label{L^2 bound of gradient W squre modified}
	As above notation, the following estimate holds
	\begin{align}
		\|u\|_{L^1}\leq C.
	\end{align}
\end{lemma}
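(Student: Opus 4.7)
The goal is $\|u\|_{L^1(\omega^n)} \leq C$, where $u = e^{W/2 - \log\vv(m_\phi)}|\nabla^\phi W|_\phi^2 + 1$ and $W = F - \log\vv(m_\phi) + f_*$. Since $W = (F+f_*) - \log\vv(m_\phi)$ is uniformly bounded by Theorem \ref{C^0-estimate} and the boundedness of $\log\vv$ on the polytope $P$, it suffices to show
\[
\int_X |\nabla^\phi W|_\phi^2\,\omega^n \le C.
\]
Using the Monge--Amp\`ere equation in the form $\omega^n = e^{-F'}\omega_\phi^n$ with $F'=W-f_*$, and noting $-F' \le |W|_{C^0}$ since $f_* \le 0$, this further reduces to proving
\[
I := \int_X |\nabla^\phi W|_\phi^2\,\omega_\phi^n \le C.
\]

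The plan is an integration-by-parts combined with a Cauchy--Schwarz bootstrap. Since $\omega_\phi$ is closed and $X$ compact,
\[
I = -\int_X W\,\Delta_\phi W\,\omega_\phi^n,
\qquad\text{so}\qquad
I \le |W|_{L^\infty}\int_X |\Delta_\phi W|\,\omega_\phi^n.
\]
Collecting terms in \eqref{Laplace W} and using $F'+f_* = W$, the expression for $\Delta_\phi W$ splits as
\[
\Delta_\phi W \;=\; 2\,\frac{\vv_{,\alpha}}{\vv}\,J\xi^\alpha(W) + G,
\]
where $|G| \le C(1 + \tr_\phi\omega + \tr\omega_\phi)$: the terms $-\frac{\vv_{,\alpha}}{\vv}J\xi^\alpha(f_*)$, $\tilde f_t$, and $-\frac{\vv_{,\alpha}}{\vv}\Delta_\omega m_\omega^{\xi^\alpha}$ are uniformly bounded (using Remark \ref{remark for xi(f_*)}), $\tr_\phi\eta_t$ is controlled by $C\tr_\phi\omega$, and the $\langle\xi^\alpha,\xi^\beta\rangle_\phi$ contributions by $C\tr\omega_\phi$ via \eqref{upper bound of xi pair}.

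For the $G$-part, $\int_X \tr_\phi\omega\,\omega_\phi^n = n\int_X \omega\wedge\omega_\phi^{n-1} = n[\omega]^n$ is a cohomological constant, and $\int_X \tr\omega_\phi\,\omega_\phi^n = \int_X \tr\omega_\phi\,e^{F'}\omega^n \le C\int_X \tr\omega_\phi\,e^{-f_*}\omega^n$ (using $|W|_{C^0} \le C$) is bounded by H\"older combined with Theorem \ref{integral C^2} and the hypothesis $\int_X e^{-pf_*}\omega^n \le C$. Hence $\int_X |G|\,\omega_\phi^n \le C$. For the derivative term, the pointwise estimate $|J\xi^\alpha W| \le C|\nabla W| \le C(\tr\omega_\phi)^{1/2}|\nabla^\phi W|_\phi$ gives, by Cauchy--Schwarz,
\[
\Big|\int_X W\cdot 2\tfrac{\vv_{,\alpha}}{\vv}J\xi^\alpha W\,\omega_\phi^n\Big|
\;\le\; C\Big(\int_X \tr\omega_\phi\,\omega_\phi^n\Big)^{1/2}\Big(\int_X |\nabla^\phi W|_\phi^2\,\omega_\phi^n\Big)^{1/2}
\;\le\; C\,I^{1/2}.
\]
Combining the two bounds yields the quadratic inequality $I \le C + CI^{1/2}$, which forces $I \le C'$ and hence $\|u\|_{L^1(\omega^n)} \le C$. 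The main subtlety is the apparent circularity in controlling the first-order term $J\xi^\alpha W$, which involves $|\nabla W|$; this is resolved purely by Cauchy--Schwarz and the quadratic bootstrap, so no integration-by-parts on the Hamiltonian vector fields $J\xi^\alpha$ (and thus no direct estimate of $\Delta_\phi m_\phi^{\xi^\alpha}$) is needed.
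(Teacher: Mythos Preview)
Your argument is correct. The reduction to bounding $I=\int_X|\nabla^\phi W|_\phi^2\,\omega_\phi^n$, the split $\Delta_\phi W=2\tfrac{\vv_{,\alpha}}{\vv}J\xi^\alpha(W)+G$ from \eqref{Laplace W}, the control of $\int_X|G|\,\omega_\phi^n$ via Theorem~\ref{integral C^2} and the integrability of $e^{-f_*}$, and the Cauchy--Schwarz bootstrap $I\le C+CI^{1/2}$ all go through.

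The paper takes a different route. Rather than keeping the first-order term $2\tfrac{\vv_{,\alpha}}{\vv}J\xi^\alpha(W)$ and absorbing it by your quadratic bootstrap, it passes to the auxiliary function $\tilde W:=F+f_*+\log\vv(m_\phi)=W+2\log\vv(m_\phi)$. Combining \eqref{csck-mt} with \eqref{Lap of logv(m_phi) and Jxi(F')} one sees that $\Delta_\phi\tilde W$ contains \emph{no} first-order derivative of $W$ (or $\tilde W$) at all; every term is either uniformly bounded or controlled by $\tr\omega_\phi$, $\tr_\phi\omega$. Integrating $\Delta_\phi(\tilde W^2)=2\tilde W\Delta_\phi\tilde W+2|\nabla^\phi\tilde W|_\phi^2$ then yields $\int_X|\nabla^\phi\tilde W|_\phi^2\,\omega_\phi^n\le C$ in one step, and one returns to $W$ using $|\nabla^\phi\log\vv(m_\phi)|_\phi^2\le C\,\tr\omega_\phi$ together with Theorem~\ref{integral C^2}. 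So the paper buys a purely linear estimate by a clever change of unknown, while your approach is more direct and avoids introducing $\tilde W$ at the small cost of a self-referential inequality; both rely on the same $W^{2,p}$ input.
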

\begin{proof}
	Recall $u=e^{\frac{W}{2}-\log(\vv(m_\phi))}|\nabla^\phi W|_\phi^2+1$. 
	It suffices to show that
	\begin{align*}
		\int_X|\nabla^\phi W|_\phi^2\omega_\phi^n\leq C.
	\end{align*}
	By \eqref{Lap of logv(m_phi) and Jxi(F')} and \eqref{csck-mt},  one observes that
	\begin{align*}
		\Delta_\phi(F+f_*+\log\vv(m_\phi))
		=&\frac{\vv_{,\alpha}}{\vv}J\xi^\alpha(F+f_*)+\Delta_\phi\log(\vv(m_\phi))+\tilde{f}_t+\tr_\phi\tilde\eta_t \\
		=&\frac{\vv_{,\alpha}}{\vv}J\xi^\alpha(f_*)
		+(\frac{\vv_{,\alpha\beta}}{\vv}-2\frac{\vv_{,\alpha}\vv_{\beta}}{\vv^2})\langle\xi^\alpha,\xi^\beta\rangle_\phi +\tilde{f}_t+\tr_\phi\tilde\eta_t.
	\end{align*}
	Denote $\tilde W=F+f_*+\log\vv(m_\phi)=W+2\log\vv(m_\phi)$.
	One has
	\begin{align*}
		\Delta_\phi(\tilde W^2)
		=&2\tilde W\Delta_\phi\tilde W+2|\nabla^\phi\tilde W|_\phi^2 \\
		=&2\tilde W\left(\frac{\vv_{,\alpha}}{\vv}J\xi^\alpha(f_*)
		+(\frac{\vv_{,\alpha\beta}}{\vv}-2\frac{\vv_{,\alpha}\vv_{\beta}}{\vv^2})\langle\xi^\alpha,\xi^\beta\rangle_\phi +\tilde{f}_t+\tr_\phi\tilde\eta_t\right)
		+2|\nabla^\phi\tilde W|_\phi^2.
	\end{align*}
	It follows that
	\begin{align*}
		\int_X|\nabla^\phi\tilde W|_\phi^2\omega_\phi^n
		\leq&\int_X\tilde W(\tilde{f}_t+\frac{\vv_{,\alpha}}{\vv}J\xi^\alpha(f_*))\omega_\phi^n
			+\int_X\tilde W\left(
		(\frac{\vv_{,\alpha\beta}}{\vv}-2\frac{\vv_{,\alpha}\vv_{\beta}}{\vv^2})\langle\xi^\alpha,\xi^\beta\rangle_\phi+\tr_\phi\tilde\eta_t
\right)\omega_\phi^n \\
		\leq&C+\|\tilde W\|_{C^0}B_{\vv,\xi}\int_X(\tr\omega_\phi+\tr_\phi\omega)\omega_\phi^n \\
		\leq&C+C\int_X(\tr\omega_\phi)^{n-1}\omega_\phi^n
		\leq C.
	\end{align*}
	Therefore, by Theorem \ref{integral C^2} and \eqref{upperbound of |nabla logv|}, we obtain
	\begin{align*}
		\int_X|\nabla^\phi W|_\phi^2\omega_\phi^n
		=&\int_X|\nabla^\phi(\tilde W-2\log\vv(m_\phi))|_\phi^2\omega_\phi^n \\
		\leq&\int_X|\nabla^\phi\tilde W|_\phi^2\omega_\phi^n
		+2\int_X|\nabla^\phi\log\vv(m_\phi)|_\phi^2\omega_\phi^n \\
		\leq& C+4\int_X\frac{\vv_{,\alpha}(m_\phi)\vv_{,\beta}(m_{\phi})}{\vv(m_\phi)^2}\langle\xi^\beta,\xi^\alpha\rangle_\phi\omega_\phi^n \\
		\leq&C+C_{\vv,\xi}\int_X\tr\omega_\phi\omega_\phi^n 
		\leq C.
	\end{align*}
\end{proof}
\section{Proof of Continuity Method}
\label{Proof of Continuity Method}
In this section, we prove the Theorem \ref{main-thm} by finishing the continuity method, concluding the openness and closedness of continuity path.

The weighted Mabuchi functional associated to the continuity path \eqref{couple system1}, \eqref{couple system2} is
\begin{align}
    \MM_{\vv,\ww,t} = \HH_\vv(\phi)  - \EE_\vv^{\Ric}(\phi) 
    + \EE_{\ww\cdot \ell_{\ext}}(\phi) 
    + \frac{1-t}{t}(\EE_\vv^{\theta}(\phi)-\underline{\theta} \cdot \EE_\ww(\phi)) - C_0,
\end{align}
where 
\begin{align*}
    & C_0 = \int_X \log(\vv(m_\omega))\vv(m_\omega)\omega^n, 
\end{align*}
and $\underline{\theta}$ is a contant determined by the equality:
\begin{align}
     \int_X \tr_{\vv,\omega}(\theta) \frac{\omega^{n}}{n!} =\int_X \vv(m_\omega) \theta\wedge\frac{\omega^{n-1}}{(n-1)!} + \int_X \langle \d\vv(m_\omega),m_\theta \rangle \frac{\omega^n}{n!} 
     = \underline{\theta}\cdot \int_X \ww(m_\omega) \frac{\omega^n}{n!}.
    \label{const theta_lower_bar}
\end{align}
In particular, $\underline{\theta}$ only depends on $[\omega]$ (\cite[Lemma 3]{Lah19}).

Denote $\JJ^{\Ric}_{\vv,\ww\cdot \ell_{\ext}}(\phi) =\EE_\vv^{\Ric}(\phi) - \EE_{\ww\cdot \ell_{\ext}}(\phi)$,
$\JJ^{\theta}_{\vv,\ww}(\phi) 
=\EE_\vv^{\theta}(\phi) - \underline{\theta} \cdot \EE_{\ww}(\phi)$.

Then
\begin{align}
    \MM_{\vv,\ww,t} = \HH_\vv(\phi)  - \JJ^{\Ric}_{\vv,\ww\cdot \ell_{\ext}}(\phi) + \frac{1-t}{t} \cdot \JJ^{\theta}_{\vv,\ww}(\phi) - C_0.
\end{align}

\subsection{Openness}
\label{Openness}

In this subsection, we show the openness of the continuity path \eqref{Chen-continuous path}, following the standard implicit function theory (\cite{Ch18}, \cite{Has19}). 
Firstly, note that as explained in the proof of Proposition 4.2 (\ref{lower bound of (v,w)-K-energy}), we can choose $K$-invariant $\theta\in[\omega]$ such that $\phi=0$ is the solution of continuity path at $t=0$.

Consider
\begin{align*}
	\cH^{4,\alpha}_{K}
	:=\{\phi\in C_K^{4,\alpha}(X)~|~\omega_\phi>0 \}
\end{align*}
and define
\begin{align*}
	\cF:\cH^{4,\alpha}_K\times[0,1]&\> C^\alpha(X) \\
	(\phi,t)&\mapsto t\left(\frac{S_\vv(\phi)}{\ww(m_\phi)}-\ell_\ext(m_\phi)\right)
	  -(1-t)\left(\frac{\tr_{\vv,\phi}(\theta)}{\ww(m_\phi)}-\underline{\theta}\right).
\end{align*}
Suppose there exists $(\phi_0,t_0)\in\cH^{4,\alpha}_K\times[0,1]$ such that $\cF(\phi_0,t_0)=0$. 
We will consider the following modified map with a normalization:
\begin{align*}
	\tcF:\cH^{4,\alpha}_{K,0}\times[0,1]&\> C^\alpha(X)_{K,0} \\
	(\phi,t)&\mapsto \cF(\phi,t)-\int_X\cF(\phi,t)\omega_{\phi_0}^n,
\end{align*}
where
\begin{align*}
	\cH^{4,\alpha}_{K,0}
	:=\{\phi\in\cH_K^{4,\alpha}~|~\int_X \phi\omega_{\phi_0}^n=0 \}.
\end{align*}
Note that $\int_X\cF(\phi,t)\ww(m_\phi)\omega_\phi^n=0$ by the definition of the constant $\underline{\theta}$ and extremal function $\ell_\ext$. Therefore if $\tcF(\phi,t)=0$, $\cF(\phi,t)$=0.
This is because $\tcF(\phi,t)=0$ implies $\cF(\phi,t)\equiv C$, then by integrating with respect to the weighted measure $\ww(m_\phi)\omega_\phi^n$, we obtain that $C$ must be zero.
Thus it suffices to apply implicit function theorem on the map $\tcF$ near $(\phi_0,t_0)$.
Denote
\begin{align*}
	\mathcal{R}(\phi):=&\frac{S_{\vv}(\phi)}{\ww(m_\phi)}-\ell_{\rm ext}(m_\phi),
	\text{ and }
\mathcal{T}(\phi):=\frac{\tr_{\vv,\phi}(\theta)}{\ww(m_\phi)}.    \\
\end{align*}
Thus 
\begin{align*}
	\cF(\phi,t)
	=t\cR(\phi)-(1-t)(\cT(\phi)-\underline{\theta}).
\end{align*}
Compute the linearization of $\tcF$:
\begin{align*}
	D_{(\phi,t)}\tcF:C^{4,\alpha}(X)_{K,0}\times\RR&\> C^\alpha(X)_{K,0} \\
	(u,s)&\mapsto D_{(\phi,t)}\cF(u,s)-\int_XD_{(\phi,t)}\cF(u,s)\omega_{\phi_0}^n,
\end{align*}
\begin{align*}
	(D_{(\phi,t)}\cF)(u,s)
	=&\lim_{\delta\>0}\frac{\cF(\phi+\delta u,t+\delta s)-\cF(\phi,t)}{\delta} \\
	=&t D_\phi\cR(u)-(1-t)D_\phi\cT(u) 
	+s(\cR(\phi)+\cT(\phi)-\underline{\theta}).
\end{align*}
Using \cite[Lemma B.1]{Lah19}, one computes
\begin{align}
\begin{split}\label{der-S-ext-v-w}
t(D_\phi\cR)(u)
=&-t\frac{(\mathcal{D}^{*}_\phi\vv(m_\phi)\mathcal{D}_\phi)u}{\ww(m_\phi)}
+t\langle\d(\cR(\phi)),\d u\rangle_\phi,\\
=&-t\frac{(\mathcal{D}^{*}_\phi\vv(m_\phi)\mathcal{D}_\phi)u}{\ww(m_\phi)}
 +\langle\d(\cF(\phi,t)),\d u\rangle_\phi
 +(1-t)\langle\d(\mathcal{T}(\phi)),\d u\rangle_\phi,
\end{split}
\end{align}
where $\mathcal{D}_\phi u:=\sqrt{2}(\nabla^\phi \d u)^-$ is the $J$-anti-invariant part of the tensor $(\nabla^\phi \d u)$, with $\nabla^\phi$
the $g_\phi$-Levi-Civita connection, and $\mathcal{D}^{*}_\phi$ is the formal adjoint of $\mathcal{D}_\phi$ with respect to $(\cdot,\cdot)_\phi$,
where
\begin{align*}
    (f,h)_\phi = \int_X f\; h \; \omega_\phi^n,
\end{align*}
for any $f,h\in C^{4,\alpha}(X)_{K,0}$.

We denote the operator $F_{\phi}:C^{4,\alpha}(X)_{K,0}\to C^{\alpha}(X)_{K,0}$ by
\begin{align*}
	F_\phi(u)
	=-D_\phi\cT(u)+ \langle\d(\mathcal{T}(\phi)),\d u\rangle_\phi.
\end{align*}
Then
\begin{align*}
	&(D_{(\phi,t)}\cF)(u,s) \\
=&-t\frac{(\mathcal{D}^{*}_\phi\vv(m_\phi)\mathcal{D}_\phi)u}{\ww(m_\phi)}
 +\langle\d(\cF(\phi,t)),\d u\rangle_\phi 
 +(1-t)F_\phi(u)+s(\cR(\phi)+\cT(\phi)-\underline{\theta}).
\end{align*}
In particular at $(\phi_0,t_0)$, we have
\begin{align*}
	(D_{(\phi_0,t_0)}\cF)(u,s)
	=-t_0\frac{(\mathcal{D}^{*}_{\phi_0}\vv(m_{\phi_0})\mathcal{D}_{\phi_0})u}{\ww(m_{\phi_0})}
  +(1-t_0)F_\phi(u)+s(\cR(\phi_0)+\cT(\phi_0)-\underline{\theta}).
\end{align*}
We denote
\begin{align*}
	\cL_{(\phi_0,t_0)}u
	:=-t_0\frac{(\mathcal{D}^{*}_{\phi_0}\vv(m_{\phi_0})\mathcal{D}_{\phi_0})u}{\ww(m_{\phi_0})}
  +(1-t_0)F_{\phi_0}(u).
\end{align*}
Thus
\begin{align*}
	D_{(\phi_0,t_0)}\tcF(u)
	=\cL_{(\phi_0,t_0)}u+s(\cR(\phi_0)+\cT(\phi_0)-\underline{\theta}).
\end{align*}
By Lemma \ref{self-adjoint}, we have that the operator $\cL_{(\phi_0,t_0)}$ is a $(\cdot,\cdot)_{\ww,\phi_0}$-self-adjoint $K$-invariant linear operator satisfying
\begin{align*}
	(\cL_{(\phi_0,t_0)}u_1,u_2)_{\ww,\phi_0}
	=&\int_X\cL_{(\phi_0,t_0)}u_1\cdot u_2\ww(m_{\phi_0})\omega_{\phi_0}^n \\
	=&-t_0\int_X\vv(m_{\phi_0})\cD_{\phi_0}u_1\cD_{\phi_0}u_2\omega_{\phi_0}^n
	-(1-t_0)\int_X\langle\theta,\d u_1\wedge \d^cu_2\rangle_{\phi_0}\vv(m_{\phi_0})\omega_{\phi_0}^{n} \\
	=&( u_1,\cL_{(\phi_0,t_0)}u_2)_{\ww,\phi_0}.
\end{align*}
When $t_0>0$, following the argument in \cite{Ch18}, we divide the remaining argument into three steps:

Step 1: show the inverse operator $\cL_{(\phi_0,t_0)}^{-1}$ is bounded, that is,
if $u\in C_K^{4,\alpha}(X)$ satisfies the equation $\cL_{(\phi_0,t_0)}u=f$ for some $f\in C_K^\alpha(X)$ and $\int_Xu\omega_{\phi_0}^n=0$,
	then there exists a constant $C$ depending on $\omega_{\phi_0}$ such that
	\begin{align*}
		\|u\|_{C^{4,\alpha}(X)}
		\leq C\|f\|_{C^\alpha(X)}.
	\end{align*}

	The proof follows from \cite[Lemma 4.3]{Ch18} by using Schauder estimates, Poincar\'e inequality. 

Step 2: Show that $D_{(\phi_0,t_0)}\tcF(\cdot,0)=\cL_{(\phi_0,t_0)}(\cdot):C^{4,\alpha}(X)_{K,0}\>C^{\alpha}(X)_{K,0}$
	is an isomorphism.

Similar to \cite[Lemma 4.4]{Ch18}, the proof is given by a standard Fredholm theorem for elliptic operators.

Step 3: 
By the standard implicit function theory, there exists $\phi(t)\in \cH^{4,\alpha}_K $ such that $\cF(\phi(t),t)=0$.

~\\
When $t_0=0$, following \cite{Has19}, we modify the operator $\cF$ as follows
\begin{align*}
	\cF_0:\cH^{4,\alpha}_{K,0}&\> C^\alpha(X)_{K,0} \\
	\phi&\mapsto \frac{S_\vv(\phi)}{\ww(m_\phi)}-\ell_\ext(m_\phi)
	  -R\left(\frac{\tr_{\vv,\phi}(\theta)}{\ww(m_\phi)}-\underline{\theta}\right)\\
   &\qquad -\int_X \frac{S_\vv(\phi)}{\ww(m_\phi)}-\ell_\ext(m_\phi)
	  -R\left(\frac{\tr_{\vv,\phi}(\theta)}{\ww(m_\phi)}-\underline{\theta}\right) \omega_{\phi_0}^n
\end{align*}
for some $R>>0$.
Its linearization is 
\begin{align}
\label{linearization_of_cF_0}
    \cL_{0,\phi_0}(u)
    :=(D_{\phi_0}\cF_0)(u)
    =-\frac{(\mathcal{D}^{*}_{\phi_0}\vv(m_{\phi_0})\mathcal{D}_{\phi_0})u}{\ww(m_{\phi_0})}
  +RF_{\phi_0}(u),
\end{align}
which is a $(\cdot,\cdot)_{\ww,\phi_0}$-self-adjoint $K$-invariant linear operator with a trivial kernal (see Lemma \ref{self-adjoint}).

By the detailed computation of the linearized operators in Appendix \ref{Linearization of opeartors}, we find that $F_{\phi_0}$ has a negative first eigenvalue.
Thus, the first eigenvalue $\lambda_{1,R}$ of $\cL_{0,\phi_0}$ is always negative for $R>0$. Indeed, let $\phi_{1,R}$ be the first eigenfunction of $\cL_{0,\phi_0}$, one has
\begin{align}
	\lambda_{1,R}
	=&\frac{(\cL_{0,\phi_0}(\phi_{1,R}),\phi_{1,R})_{\ww,\phi_0}}{(\phi_{1,R},\phi_{1,R})_{\ww,\phi_0}} \nonumber \\
	=&\frac{1}{(\phi_{1,R},\phi_{1,R})_{\ww,\phi_0}}\left(-\int_X\phi_{1,R}(\cD_{\phi_0}^*\vv(m_{\phi_0})\cD_{\phi_0})\phi_{1,R}\omega_{\phi_0}^n+R(F_{\phi_0}(\phi_{1,R}),\phi_{1,R})_{\ww,\phi_0}\right) \nonumber \\
	=&\frac{-1}{(\phi_{1,R},\phi_{1,R})_{\ww,\phi_0}}\int_X\vv(m_{\phi_0})|\cD_{\phi_0}\phi_{1,R}|^2_{\phi_0}\omega_{\phi_0}^n
	  +R\frac{(F_{\phi_0}(\phi_{1,R}),\phi_{1,R})_{\ww,\phi_0}}{(\phi_{1,R},\phi_{1,R})_{\ww,\phi_0}} \nonumber \\
	\leq& R\lambda_1(F_{\phi_0})\leq -C\cdot R <0.
	\label{lamdba_1,r}
\end{align}

At the end, the implicit function theorem argument in \cite{Has19} implies that there exists a solution $\phi\in\cH_K$ of $\cF_0(\phi)=0$ for $R$ large enough. 
We sketch the main steps of Hashimoto's argument as follows.
\begin{enumerate}[(i)]
	\item As stated in \eqref{equ_t=0}, we can find a solution $\omega$ for the continuity path at $t=0$. Perturb $\omega$ to get    
	      \begin{align*}
	      	\omega_m:=\omega+\ddc(R^{-1}\phi_1+\cdots+R^{-m}\phi_m)
	      \end{align*}	 
	      for some $\phi_1,\ldots,\phi_m\in C^\infty(X)_K$, satisfying $\cR(\omega_m)-R\cT(\omega_m)=const+R^{-m}f_{m,R}$ for a function $f_{m,R}\in C^\infty(X)_K$ with $(f_{m,R},1)_{\ww,\phi_0}=0$ which is bounded in $C^\infty(X)_K$ for all sufficiently large $R$. 
	      The proof follows from \cite[Lemma 3]{Has19}. 
          
	      Then by solving a weighted Green's function $\frac{\d^*(\vv(m_{\omega_m})\d G_{m,R})}{\ww(m_{\omega_m})}=f_{m,R}$, we can find that $\omega_m$ satisfies 
	      \begin{align}
	      \label{equ_of_(omega_j,theta_j)}
	      	\cR(\omega_m)-R\frac{\tr_{\vv,\omega_m}\theta_m}{\ww(m_{\omega_m})}=const
	      \end{align} where $\theta_m:=\theta+R^{-m-1}\ddc G_{m,R}$.
	\item Consider the Banach spaces $B_1=\Omega^{1,1}(X)_K\times L^2_{p+4}(X)_{K,0}$ and $B_2=\Omega^{1,1}(X)_K\times L^2_{p}(X)_{K,0}$, and an open set $U:=\{(\chi,\phi)\in B_1~|~\omega_{m,\phi}:=\omega_m+\ddc\phi>0\}$, where $\Omega^{1,1}(X)_K$ is the $L^2_p(X)$ Sobolev completion of  the space of $K$-invariant real $(1,1)$-forms and $L^2_p(X)_{K,0}$ is the Sobolev completion of $C^\infty(X)_K$ with $(\phi,1)_{\ww,\phi_0}=0$. We can define a map $T:U\rightarrow B_2$ by
	    \begin{align*}
	    	T(\chi,\phi)
	    	:=\left(\theta_m+\chi, \cR(\omega_{m,\phi})-R\frac{\tr_{\vv,\omega_{m,\phi}}(\theta_m+\chi)}{\ww(m_{\omega_{m,\phi}})} \right)
	    \end{align*}
	    Using \eqref{equ_of_(omega_j,theta_j)}, the linearization of $T$ at $0$ is given by
	    \begin{align*}
	    (D_0T)(\chi,u)
	    =
	    	\left(\begin{array}{cc}
	    		1 & 0 \\
	    		-R\frac{\tr_{\vv,\omega_m}}{\ww(m_{\omega_m})} & \cL_{0,\omega_m}	    	\end{array} \right)
	    	\left(\begin{array}{c}
	    		\chi \\
	    		u
	    	\end{array} \right),
	    \end{align*}
	    where $\cL_{0,\omega_m}$ is defined as in \eqref{linearization_of_cF_0}, where we substitute $\theta$ by $\theta_m$ and substitute $\omega_{\phi_0}$ by $\omega_m$.
	    
	    By Appendix \ref{Linearization of opeartors}, we have $\cL_{0,\omega_m}$ is a $(\cdot,\cdot)_{\ww,\omega_m}$-self-adjoint linear operator with trivial kernel. Then by Fredholm theory, we have that $D_0T:B_1\rightarrow B_2$ is an isomorphism with the inverse
	    \begin{align*}
	    	P
	    =
	    	\left(\begin{array}{cc}
	    		1 & 0 \\
	    		\left(\cL_{0,\omega_m}\right)^{-1}\left(\frac{-R\tr_{\vv,\omega_m}}{\ww(m_{\omega_m})}\right) & 
	    		\left(\cL_{0,\omega_m}\right)^{-1}
	    	\end{array} \right).
	 	\end{align*}
	 	By the computation of \eqref{lamdba_1,r} and Appendix \ref{Linearization of opeartors}  (see also \cite[lemma 4]{Has19}), there is a constant $C=C(\omega,\theta)>0$ such that $\lambda_{1,m}<-CR$ for all sufficiently large $R$, where $\lambda_{1,m}<0$ is the largest non-zero eigenvalue of $\cL_{0,\omega_m}$.
	 	By the above definition of $P$, one has $\|P\|_{\rm op}\leq CR^2$ (see \cite[Below of Lemma 4]{Has19}).
        \item By the definition of $T$, for $l\in\NN$  and $l\geq5$, for any $R$ sufficiently large, the higher order part $T-D_0T$ is Lipschitz with Lipschitz constant $1/(2\|P\|_{\rm op})$ in a ball centered at  $0$ of radius $\delta'=R^{-l}$.
        \item We choose $\delta=\frac{\delta'}{2\|P\|_{\rm op}}>\frac{1}{2C}R^{-l-2}$ for latter use.
        \item Note that $T(0,0)=(\theta_m,-R\underline{\theta})$. Since $\theta_m-\theta=O(R^{-m-1})$, we obtain
\begin{align*}
    \|T(0,0)-(\theta,-R\underline{\theta})\|_{L^2_p}
    \leq C R^{-m-1}
    \leq \frac{1}{2C}R^{-l-2}<\delta,
\end{align*}
for all large enough $R > 0$, by taking $m$ to be sufficiently large.
Choose $p>0$ to be sufficiently large such that $C^4(X) \hookrightarrow L^2_p(X)$. By the argument above, the assumptions of the quantitative inverse function theorem (see \cite[Theorem 2]{Has19}) are satisfied. By applying this theorem, there exists $(\chi,\phi)\in U$ such that $\theta_m+\chi=\theta$ and 
\begin{align*}
    \cR(\omega_{m,\phi})-R\frac{\tr_{\vv,\omega_{m,\phi}}(\theta)}{\ww(m_{\omega_{m,\phi}})}=R\underline{\theta}
\end{align*}
for all large enough $R>0$. The standard elliptic PDE theory shows that $\phi$ is smooth.
\end{enumerate}  
~\\

Combining the two cases of $t_0=0$ and $0<t_0<1$, we obtain the following openness property.
\begin{thm}
	Suppose $\cF(\phi_0,t_0)=0$ for some $t_0\in[0,1)$.
	Then for any $t\in [0,1)$ that is sufficiently close to  $t_0$, there exists $\phi(t)\in \cH^{4,\alpha}_K $ such that $\cF(\phi(t),t)=0$.
\end{thm}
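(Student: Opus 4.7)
The plan is to apply the implicit function theorem to the normalized map $\tcF$ in a neighborhood of $(\phi_0,t_0)$, treating the cases $t_0\in(0,1)$ and $t_0=0$ separately. Since $\int_X\cF(\phi,t)\omega_{\phi}^n=0$, any zero of $\tcF$ gives a zero of $\cF$, so it suffices to show $\tcF(\cdot,t)$ has a zero on $\cH^{4,\alpha}_{K,0}$ for $t$ close to $t_0$. The ellipticity and $(\cdot,\cdot)_{\ww,\phi_0}$-self-adjointness of $\cL_{(\phi_0,t_0)}$, together with the $K$-invariance of all constructions, reduce matters to a Fredholm dichotomy on the Banach spaces $C^{4,\alpha}(X)_{K,0}\to C^\alpha(X)_{K,0}$.

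For $t_0\in(0,1)$, the plan is to execute the three-step scheme already outlined. First, I would establish the Schauder-type bound $\|u\|_{C^{4,\alpha}}\leq C\|f\|_{C^\alpha}$ for $\cL_{(\phi_0,t_0)}u=f$ with $u\in C^{4,\alpha}_K(X)_{K,0}$, using that $\cL_{(\phi_0,t_0)}$ is a fourth-order uniformly elliptic operator (the leading term is $-t_0\vv(m_{\phi_0})/\ww(m_{\phi_0})\cdot \cD^*_{\phi_0}\cD_{\phi_0}$, which is strictly elliptic since $t_0>0$ and $\vv,\ww>0$), together with a Poincar\'e-type inequality to handle the normalization $\int_X u\,\omega_{\phi_0}^n=0$. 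Second, using the self-adjointness computation already carried out in the excerpt and the fact that its kernel on $C^{4,\alpha}_{K,0}$ is trivial (constants are excluded by the normalization, and $K$-invariant Hamiltonian vector fields exhausting the extremal part are removed by working modulo the torus action), a standard Fredholm argument gives that $\cL_{(\phi_0,t_0)}:C^{4,\alpha}(X)_{K,0}\to C^\alpha(X)_{K,0}$ is an isomorphism. Third, the implicit function theorem then yields a $C^{4,\alpha}$-family $\phi(t)$ with $\tcF(\phi(t),t)=0$ for $t$ near $t_0$.

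For $t_0=0$, the leading-order bi-Laplacian term vanishes, so $\cL_{(\phi_0,0)}=F_{\phi_0}$ is only second-order and its invertibility is not automatic. The plan, following Hashimoto's trick, is to replace $\cF$ by the shifted map $\cF_0$ with parameter $r\gg 0$: its linearization $\cL_{0,\phi_0}=-\cD^*_{\phi_0}\vv(m_{\phi_0})\cD_{\phi_0}/\ww(m_{\phi_0})+rF_{\phi_0}$ is fourth-order elliptic and self-adjoint. The key analytic input — which I expect to be the main obstacle — is showing that the first nonzero eigenvalue of $F_{\phi_0}$ on $C^{4,\alpha}(X)_{K,0}$ is \emph{strictly negative}. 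This will be verified via the explicit computation of the linearized operator in Appendix \ref{Linearization of opeartors}, where integration by parts against a positive $(1,1)$-form $\theta$ produces a manifestly non-positive quadratic form; strict negativity then follows by coupling with the Poincar\'e inequality on $C^{4,\alpha}_{K,0}$. Once this is established, for $r$ sufficiently large the first eigenvalue $\lambda_{1,r}$ of $\cL_{0,\phi_0}$ stays strictly negative, so $\cL_{0,\phi_0}$ is an isomorphism and the implicit function theorem again produces the desired deformation $\phi(t)$ near $t_0=0$.

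Combining the two regimes gives the theorem. The principal difficulties are thus (i) verifying triviality of the kernel of $\cL_{(\phi_0,t_0)}$ on the normalized space (which encodes the removal of torus automorphisms appearing in $\Aut_T(X)$), and (ii) the spectral sign estimate for $F_{\phi_0}$ at $t_0=0$; the remaining Schauder theory, Fredholm alternative, and implicit function theorem applications are standard and essentially identical to \cite{Ch18, Has19}, with the only new ingredient being that all pointwise bounds on the weights $\vv,\ww$ and on $\theta$ are available from the hypotheses of the continuity path.
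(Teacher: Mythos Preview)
Your proposal is correct and follows essentially the same approach as the paper: split into $t_0\in(0,1)$ and $t_0=0$, use the self-adjointness of $\cL_{(\phi_0,t_0)}$ together with Schauder estimates and the Fredholm alternative for $t_0>0$, and invoke Hashimoto's reparametrized map $\cF_0$ with large $r$ for $t_0=0$.

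One small correction to your reasoning: for $t_0\in(0,1)$ the triviality of $\ker\cL_{(\phi_0,t_0)}$ on $C^{4,\alpha}(X)_{K,0}$ does \emph{not} come from quotienting out torus automorphisms. It comes directly from the strict positivity of $\theta$: the quadratic form
\[
(\cL_{(\phi_0,t_0)}u,u)_{\ww,\phi_0}
=-t_0\int_X\vv|\cD_{\phi_0}u|^2\,\omega_{\phi_0}^n
-(1-t_0)\int_X\langle\theta,\d u\wedge\d^{\rm c}u\rangle_{\phi_0}\vv\,\omega_{\phi_0}^n
\]
is non-positive, and since $1-t_0>0$ and $\theta>0$ the second term forces $\d u=0$, hence $u=0$ by the normalization. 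Killing potentials would only enter the kernel at $t_0=1$, which is outside the range considered here. Similarly, at $t_0=0$ the reason $\cL_{0,\phi_0}$ is an isomorphism is that \emph{both} summands are non-positive and $rF_{\phi_0}$ is strictly negative on the normalized space (again by positivity of $\theta$), so the whole operator is negative definite; saying only that ``the first eigenvalue is negative'' does not by itself give invertibility unless you also use this non-positivity.
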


\subsection{Closedness}
\label{Closedness}

In this subsection, we will always assume the weighted Mabuchi functional $\MM_{\vv,\ww\cdot \ell_{\ext}}$ is $\mathbb{G}$-coercive. Recall that $\GG = K_\CC$ is a reductive Lie subgroup of ${\Aut}_T(X)$ and $K$ contains a maximal torus, and $\TT$ is the center of $\GG$.

\begin{prop}
\label{inv and lower bound of (v,w)-K-energy}
\begin{enumerate}[$(1)$]
    \item 
    \label{lower bound of (v,w)-K-energy}
    The weighted Mabuchi functional $\MM_{\vv,\ww\cdot \ell_{\ext}}$ is bounded from below. 
    Furthermore, $\MM_{\vv,\ww\cdot \ell_{\ext},t}$ is coercive for $0<t<1$.
    
    \item 
    \label{inv of (v,w)-K-energy under gp}
    The weighted Mabuchi functional $\MM_{\vv,\ww\cdot \ell_{\ext}}$ is invariant under the group action $\sigma\in \GG$. 
    \end{enumerate}
\end{prop}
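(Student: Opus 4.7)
The plan is to dispatch the two claims independently, both being largely reductions to established facts.

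For (1), the lower bound is immediate from the $\GG$-coercivity hypothesis $\MM_{\vv,\ww\cdot\ell_{\ext}}(\phi)\geq\delta\,\JJ_{\vv,\TT}(\phi)-C$ together with the observation $\JJ_{\vv,\TT}\geq 0$. The latter follows from the manifestly non-negative integrand formula for $\JJ_\vv$ recorded in Subsection~\ref{Weighted Monge-Ampere measures, space of potentials, and functionals} and is preserved by the infimum over $\TT$. For the coercivity of $\MM_{\vv,\ww\cdot\ell_{\ext},t}$ on $(0,1)$, I would use the additive decomposition
\begin{equation*}
\MM_{\vv,\ww\cdot\ell_{\ext},t}(\phi) \;=\; \MM_{\vv,\ww\cdot\ell_{\ext}}(\phi) \;+\; \frac{1-t}{t}\,\JJ^{\theta}_{\vv,\ww}(\phi),
\end{equation*}
and reduce the problem to proving a one-sided comparison $\JJ^{\theta}_{\vv,\ww}(\phi)\geq c\,\JJ_\vv(\phi)-C'$ with $c>0$. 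Since $\theta$ is smooth and strictly positive in $[\omega]$, there exist $0<c_1<c_2$ with $c_1\omega\leq\theta\leq c_2\omega$; a term-by-term comparison of the integrands of $\EE^{\theta}_\vv$ with those of $\EE^{\omega}_\vv$, noting that the Hamiltonian coupling $\langle d\vv(m_{\phi_t}),m_\theta\rangle$ is uniformly bounded and hence contributes only an additive constant, will produce the needed bound. Combined with the lower bound of $\MM_{\vv,\ww\cdot\ell_{\ext}}$, this gives coercivity.

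For (2), the $\GG$-invariance will be obtained by differentiating along one-parameter subgroups $\sigma_s=\exp(sV)\subset\GG$. For $\sigma\in\GG$, $\sigma^*\omega=\omega+dd^c\psi_\sigma$ with a $K$-invariant potential, and since $\sigma_s$ commutes with $T$ one has the equivariance $m_{\sigma_s^*\phi}=m_\phi\circ\sigma_s$. Direct differentiation then reduces $\tfrac{d}{ds}\big|_{s=0}\MM_{\vv,\ww\cdot\ell_{\ext}}(\sigma_s^*\phi)$ to the weighted $L^2$-pairing of the residue $\tfrac{S_\vv(\phi)}{\vv(m_\phi)}-\tfrac{\ww(m_\phi)}{\vv(m_\phi)}\ell_{\ext}(m_\phi)$ with the real Hamiltonian of $V$ against $\vv(m_\phi)\omega_\phi^n$. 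For $V$ in the semisimple part of $\mathfrak{g}$ this pairing vanishes by the standard vanishing of the (weighted) Futaki-type character on the semisimple part (compare \cite{Lah19, AJL23}), while for $V$ in the center $\mathfrak{t}\cap\mathfrak{g}$ it vanishes by the very defining property of $\ell_{\ext}$ as the affine function for which the weighted extremal Futaki invariant vanishes on $\mathfrak{t}$. Connectedness of $\GG$ upgrades infinitesimal invariance to the full group invariance.

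The main obstacle I anticipate is the comparison $\JJ^{\theta}_{\vv,\ww}\geq c\,\JJ_\vv-C'$ in part (1): the pointwise sandwich $c_1\omega\leq\theta\leq c_2\omega$ handles the leading Monge-Amp\`ere-type pairing of $\EE^\theta_\vv$ in a clean way, but the auxiliary moment-map coupling terms appearing in the very definition of $\JJ^{\theta}_{\vv,\ww}$ must be absorbed into an additive constant rather than scaled with $\JJ_\vv$, which requires carefully exploiting the uniform boundedness of $\vv$ and its derivatives on the compact polytope $P$.
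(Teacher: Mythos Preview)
Your approach to part (1) matches the paper's: both reduce to showing $\JJ^\theta_{\vv,\ww}\geq c\,\JJ_\vv-C'$ via $\theta\geq\eps\omega$ and then combine with the lower bound of $\MM_{\vv,\ww\cdot\ell_{\ext}}$. The paper makes the intermediate step slightly more explicit by invoking the identity $\JJ^\omega_{\vv,\ww}=(\II_\vv-\JJ_\vv)-\underline{\theta}\,\EE_\ww$ on normalized potentials, which absorbs the $\EE_\ww$-term together with the moment-map coupling in one stroke; your term-by-term integrand comparison arrives at the same conclusion, and the obstacle you flag (absorbing the Hamiltonian coupling into an additive constant) is exactly the right one to watch.

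For part (2) your route differs from the paper's. You propose to show the weighted Futaki character vanishes by splitting $\mathfrak{g}$ into its semisimple part (where Lie-algebra characters vanish) and its center (where the defining property of $\ell_{\ext}$ kills it). The paper instead observes that along any one-parameter subgroup $\sigma(t)\subset\GG$ the functional is affine in $t$ with slope equal to the weighted Futaki invariant $\FF_{\vv,\ww}(\xi)$, and since $\MM_{\vv,\ww\cdot\ell_{\ext}}$ was just shown to be bounded below, that slope is forced to be zero. This argument is more self-contained: it recycles part (1) and needs neither the semisimple/center decomposition nor the precise characterization of $\ell_{\ext}$. Your argument is also correct but imports more external structure (the vanishing of characters on the derived subalgebra and the specific normalization of $\ell_{\ext}$ from \cite{Lah19}); the paper's version would go through even if one only knew $\ell_{\ext}$ as \emph{some} fixed affine function for which $\GG$-coercivity holds.
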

\begin{proof}
    \begin{enumerate}
    \item 
    The lower bound is a direct consequence of $\GG$-coercivity.
    
    For $t\in [0,1)$, by the definition of $\MM_{\vv,\ww\cdot \ell_{\ext},t}$, it suffices to show that $\JJ_{\vv,\ww}^\theta$ is coercive. 
    We take a suitable choice of $\theta:=\omega+\ddbar\psi_{0}$ such that 
	\begin{align}
	\label{equ_t=0}
		\frac{\tr_{\vv,\omega}\theta}{\ww(m_\omega)}=\underline{\theta}.
	\end{align}
	Indeed, the equation \eqref{equ_t=0} can be rewritten as
	\begin{align*}
		\frac{\d^*(\vv(m_\omega)\d\psi_0)}{\ww(m_\omega)}
		=\frac{\tr_{\vv,\omega}(\omega)}{\ww(m_\omega)}-\underline{\theta}.
	\end{align*}
	Note that $\frac{\d^*(\vv(m_\omega)\d\psi_0)}{\ww(m_\omega)}$ a second order  $(\cdot,\cdot)_{\ww,\omega}$-self-adjoint elliptic operator.
	By standard elliptic theory it follows that there exist a solution $\psi_0\in C^\infty(X)_K$ of \eqref{equ_t=0}.
    
    Then $\theta\geq \eps\omega$ for some $\eps>0$. Recall that for any $\phi\in \cH_{K,0}$,
    \begin{align*}
        \JJ^\omega_{\vv,\ww}(\phi) = (\II_{\vv} - \JJ_{\vv})(\phi) - \underline{\theta} \EE_\ww(\phi) \geq C\cdot \JJ_{\vv}(\phi) - C
    \end{align*}
    Thus we have
    \begin{align*}
        \JJ_{\vv,\ww}^\theta(\phi)
        \geq\eps\JJ_{\vv,\ww}^\omega (\phi) \geq C\epsilon\cdot \JJ_\vv(\phi) - C,
    \end{align*}
    which shows that $\JJ^\theta_{\vv,\ww}$ is coercive.
    \item 
    Let $\sigma\in \TT$, then there exists a holomorphic vector field $\xi$ which generates a one-parameter subgroup $\{\sigma(t)\}_{t\in\RR}$ such that $\sigma(0)=\id$ and $\sigma(1)=\sigma$.
    By the definition of weighted Mabuchi functional and weighted Futaki invariant (see \cite[Section 6]{Lah19}), we have
    	\begin{align*}
    	   	\frac{\d}{\d t}\MM_{\vv,\ww\cdot \ell_{\ext}}(\sigma(t)^*\phi)
    	   	=\FF_{\vv,\ww}(\xi)=a,
    	\end{align*}
    where $a$ is a constant and $\FF_{\vv,\ww}$ is the weighted Futaki invariant.
    Since $\MM_{\vv,\ww\cdot \ell_{\ext}}$ is bounded from below, then $a=0$. It implies $\MM_{\vv,\ww\cdot \ell_{\ext}}(\sigma^*\phi)=\MM_{\vv,\ww\cdot \ell_{\ext}}(\phi)$. 
    \end{enumerate}
\end{proof}

\begin{prop}
\label{solution and minimizer of twisted (v,w)-K-energy}
    Suppose $\MM_{\vv,\ww\cdot \ell_{\ext},t}$ is coercive. Then there exists a solution $\varphi_t$ to \eqref{Chen-continuous path}, which is also the minimizer of $\MM_{\vv,\ww\cdot \ell_{\ext},t}$ for $t\leq 1$. 
\end{prop}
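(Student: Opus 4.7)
The plan is to obtain the smooth solution $\varphi_t$ by passing to the limit along a sequence of smooth solutions $\varphi_{s_k}$ with $s_k \nearrow t$. Such solutions exist on an initial subinterval containing $0$ by the openness argument of Subsection \ref{Openness}, so the heart of the matter is to extend to the given $t$ through uniform a priori estimates, and then identify the limit with a minimizer.

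\textbf{Step 1 (uniform energy bounds).} The coercivity of $\MM_{\vv,\ww\cdot\ell_{\ext},t}$ yields a uniform coercivity constant for $\MM_{\vv,\ww\cdot\ell_{\ext},s}$ on a neighborhood of $t$, because the $s$-dependence is affine in $\frac{1-s}{s}$ and $\JJ^{\theta}_{\vv,\ww}$ itself controls $\JJ_{\vv}$ from Proposition \ref{inv and lower bound of (v,w)-K-energy}\eqref{lower bound of (v,w)-K-energy}. Combined with the uniform upper bound $\MM_{\vv,\ww\cdot\ell_{\ext},s_k}(\varphi_{s_k}) \leq \MM_{\vv,\ww\cdot\ell_{\ext},s_k}(0) \leq C$ (which holds because $\varphi_{s_k}$ minimizes the twisted functional along the path, an inductive hypothesis propagated forward from $s=0$), this produces $\JJ_{\vv}(\varphi_{s_k}) \leq C$. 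If $\MM_{\vv,\ww\cdot\ell_{\ext},t}$ is only $\GG$-coercive (the relevant case when $t=1$), I would first replace $\varphi_{s_k}$ by $\sigma_k^*\varphi_{s_k}$ for appropriate $\sigma_k\in\TT$ so as to attain the infimum in the definition of $\JJ_{\vv,\TT}$; by Lemma \ref{sigma-action of couple system} the new potentials satisfy the modified system \eqref{modified couple system1}--\eqref{modified couple system2} with an auxiliary $f_*\in\Psh(X,(\tfrac{1-s_k}{s_k})\theta)$, $\sup_X f_* = 0$.

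\textbf{Step 2 (entropy bound and a priori estimates).} From Step 1 together with the uniform upper bound on $\MM_{\vv,\ww\cdot\ell_{\ext},s_k}(\varphi_{s_k})$, I extract a uniform bound on the weighted entropy $\HH_\vv(\varphi_{s_k})$, since $\EE_\vv^{\Ric}$, $\EE_{\ww\cdot\ell_{\ext}}$, $\EE_\vv^{\theta}$, $\EE_\ww$ are all controlled by $\JJ_\vv$ plus a constant. This entropy bound supplies the hypothesis $\int_X \tilde F\, e^{\tilde F}\omega_X^n \leq K_1$ needed for Theorem \ref{C^0-estimate}, yielding uniform $\sup|\varphi_{s_k}|$ and $\sup(\tilde F_{s_k}+f_*)$ (the integrability $\int_X e^{-pf_*}\omega^n < C(p)$ is available from Remark \ref{remark for xi(f_*)}/Proposition \ref{integrability of f_*} once $s_k$ is close enough to $t$, and on the whole compact range of $s$ away from $1$ the function $f_*$ is simply bounded). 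Plugging this into Theorem \ref{integral C^2} gives uniform $W^{2,p}$-bounds, and Theorem \ref{gradient of F} gives uniform bounds on $|\nabla^\phi(F+f_*-\log\vv(m_\phi))|_\phi$. Finally the higher-order estimates collected in the Appendix upgrade these to uniform $C^{k,\alpha}$ bounds.

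\textbf{Step 3 (limit and minimization).} By Arzel\`a--Ascoli, a subsequence of $\varphi_{s_k}$ converges in $C^\infty$ to a smooth $K$-invariant $\varphi_t$; passing to the limit in the equation shows that $\varphi_t$ (or $\sigma^*\varphi_t$ for a limit $\sigma\in\TT_{\CC}$ of the $\sigma_k$ in the $\GG$-coercive case) solves \eqref{Chen-continuous path}. To see that $\varphi_t$ is a minimizer of $\MM_{\vv,\ww\cdot\ell_{\ext},t}$ on $\cE^1_K(X,\omega)$, I invoke the convexity of $\MM_{\vv,\ww\cdot\ell_{\ext},t}$ along finite-energy geodesics (Proposition \ref{inv and lower bound of (v,w)-K-energy} together with the convexity of $\JJ_\vv$, $\JJ_{\vv,\ww}^\theta$ and $\HH_\vv$ from \cite{BDL17,Lah19,Lah23}): since $\varphi_t$ is a smooth critical point and the functional is convex and bounded below, it must realize the infimum.

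\textbf{Main obstacle.} The decisive difficulty is entirely concentrated in Step 2, namely producing the uniform a priori estimates for the weighted coupled system. The new feature relative to \cite{CC21a,CC21b} is the third-order derivative term $\frac{\vv_{,\alpha}}{\vv}J\xi^\alpha(F)$ appearing in \eqref{couple system2}, which forces the modified test functions and integral weights $e^{2a\log\vv}\omega_\phi^n$ used in Theorems \ref{integral C^2}--\ref{gradient of F}; the log-concavity assumption on $\vv$ enters precisely to kill the sign-indefinite curvature-type term $\bigl(\frac{\vv_{,\alpha\beta}}{\vv}-\frac{\vv_{,\alpha}\vv_{,\beta}}{\vv^2}\bigr)\langle\xi^\alpha,\xi^\beta\rangle_\phi$ in the $W^{2,p}$-estimate. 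The secondary difficulty is ensuring that the modification by the auxiliary $f_*$ used to compensate for the $\TT$-action does not spoil any of these estimates, which is exactly the reason the estimates of Section \ref{Apriori estimates} are formulated for the modified system.
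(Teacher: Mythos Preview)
Your core strategy---run the continuity method on $[0,t]$, use coercivity plus the minimizing property along the path to bound $\JJ_\vv$ and hence the entropy uniformly, feed this into the a priori estimates, bootstrap, and pass to the limit---is correct and is precisely the paper's approach; the paper's own proof is a two-line appeal to the $\Aut_0(X)=0$ case of \cite[Section 4.1]{CC21b} together with Theorem \ref{C^2+gradient of F} and Corollary \ref{bootstrapping}.

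Where your write-up overreaches is in grafting the $\GG$-coercive machinery onto this Proposition. The hypothesis is \emph{plain} coercivity of $\MM_{\vv,\ww\cdot\ell_{\ext},t}$, not $\GG$-coercivity, so there is no $\TT$-reduction, no pull-back by $\sigma_k$, no auxiliary $f_*$, and no call on Lemma \ref{sigma-action of couple system} or Proposition \ref{integrability of f_*}. One works directly with the unmodified system \eqref{couple system1}--\eqref{couple system2} (equivalently $f_*\equiv 0$), and for $s$ in a compact subinterval of $(0,1]$ the twisting form $\eta_s$ is uniformly controlled. This is exactly why the paper points to the Appendix rather than to Section \ref{Apriori estimates}: Theorem \ref{C^2+gradient of F} there delivers a \emph{pointwise} bound $\tr\omega_\phi\leq C$ (by adding $\tr\omega_\phi$ to the Moser test function, a trick available precisely because $f_*=0$), after which Corollary \ref{bootstrapping} bootstraps immediately and Arzel\`a--Ascoli applies. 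Your ``Main obstacle'' paragraph therefore misplaces the difficulty: the $f_*$ modification and the whole $\TT$-reduction argument are the content of Subsection \ref{Closedness}, where one passes from $\GG$-coercivity of $\MM_{\vv,\ww\cdot\ell_{\ext}}$ to a solution at $t=1$, and are \emph{not} part of the present Proposition.
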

\begin{proof}
    This corresponds to the case ${\Aut}_0(X)=0$ in \cite[Section 4.1]{CC21b}. 
    We have the higher order estimate of equations \eqref{couple system1}, \eqref{couple system2} (see Theorem \ref{C^2+gradient of F} and Corollary \ref{bootstrapping}). 
    Exactly, using the standard conitinuity method in \cite[Section 4.1]{CC21b} implies the existence of the solution to \eqref{couple system1}, \eqref{couple system2}.
\end{proof}

\begin{prop}
    There exists a sequence $\{t_i\}_{i\in\mathbb{N}}$, $t_i\to1$, satisfying
    \begin{align}
    \label{converg of twisted (v,w)-K-energy}
    \MM_{\vv,\ww\cdot\ell_\ext,t_i}(\varphi_i)
    \to \inf_{\phi\in\cE^1_{{K}}}\MM_{\vv,\ww\cdot \ell_{\ext}}(\phi)
    \end{align} 
    and 
    \begin{align}
    \label{converg of (v,w)-K-energy}
    	\MM_{\vv,\ww\cdot \ell_{\ext}}(\varphi_i)
    	\>\inf_{\phi\in\cE^1_{{K}}}\MM_{\vv,\ww\cdot \ell_{\ext}}(\phi)
    \end{align}
    where $\varphi_i = \varphi_{t_i}$ is the solution of \eqref{couple system1}, \eqref{couple system2} for $t=t_i<1$.
\end{prop}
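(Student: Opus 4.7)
The plan is a variational sandwich argument based on two facts: ($a$) $\varphi_t$ minimizes $\MM_{\vv,\ww\cdot\ell_\ext,t}$ on $\cE^1_K$ (Proposition \ref{solution and minimizer of twisted (v,w)-K-energy}); and ($b$) the decomposition
\[
\MM_{\vv,\ww\cdot\ell_\ext,t} \;=\; \MM_{\vv,\ww\cdot\ell_\ext} \;+\; \tfrac{1-t}{t}\,\JJ^{\theta}_{\vv,\ww},
\]
which splits the twisted functional into the untwisted one plus a vanishing correction. Given also a uniform lower bound $\JJ^{\theta}_{\vv,\ww}(\varphi_t) \geq -C$ along the path (to be justified at the end), both convergences will in fact hold along \emph{any} sequence $t_i \nearrow 1$.

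First, the upper bound: for an arbitrary fixed $\phi \in \cE^1_K$, the minimizing property of $\varphi_t$ gives
\[
\MM_{\vv,\ww\cdot\ell_\ext,t}(\varphi_t) \;\leq\; \MM_{\vv,\ww\cdot\ell_\ext,t}(\phi) \;=\; \MM_{\vv,\ww\cdot\ell_\ext}(\phi) + \tfrac{1-t}{t}\JJ^{\theta}_{\vv,\ww}(\phi),
\]
so letting $t \to 1$ (the correction vanishes for fixed $\phi$) and then taking $\inf_{\phi}$ yields
$\limsup_{t\to 1}\MM_{\vv,\ww\cdot\ell_\ext,t}(\varphi_t) \leq \inf_{\cE^1_K}\MM_{\vv,\ww\cdot\ell_\ext}$. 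For the matching lower bound, combine the trivial $\MM_{\vv,\ww\cdot\ell_\ext}(\varphi_t) \geq \inf\MM_{\vv,\ww\cdot\ell_\ext}$ with ($b$) and the assumed lower bound on $\JJ^{\theta}_{\vv,\ww}(\varphi_t)$ to obtain
\[
\MM_{\vv,\ww\cdot\ell_\ext,t}(\varphi_t) \;\geq\; \inf\MM_{\vv,\ww\cdot\ell_\ext} - \tfrac{1-t}{t}\,C,
\]
which proves \eqref{converg of twisted (v,w)-K-energy}. The convergence \eqref{converg of (v,w)-K-energy} is then extracted from the sandwich
\[
-\tfrac{1-t}{t}\,C \;\leq\; \MM_{\vv,\ww\cdot\ell_\ext,t}(\varphi_t) - \MM_{\vv,\ww\cdot\ell_\ext}(\varphi_t) \;=\; \tfrac{1-t}{t}\JJ^{\theta}_{\vv,\ww}(\varphi_t) \;\leq\; \MM_{\vv,\ww\cdot\ell_\ext,t}(\varphi_t) - \inf\MM_{\vv,\ww\cdot\ell_\ext},
\]
whose outer terms both tend to $0$ by what we just proved, together with $\MM_{\vv,\ww\cdot\ell_\ext}(\varphi_t) \geq \inf\MM_{\vv,\ww\cdot\ell_\ext}$.

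The main subtlety is justifying the uniform bound $\JJ^{\theta}_{\vv,\ww}(\varphi_t) \geq -C$. The functional $\JJ^{\theta}_{\vv,\ww}$ is not translation-invariant, whereas the values of $\MM_{\vv,\ww\cdot\ell_\ext,t}$ and $\MM_{\vv,\ww\cdot\ell_\ext}$ appearing in the statement are stable under the gauge freedom in the weighted Mabuchi functional; I would therefore first replace $\varphi_t$ by $\varphi_t + c_t$ with $c_t$ chosen so that, say, $\sup_X \varphi_t = 0$. Such a normalization preserves the minimizing property of $\varphi_t$ and does not affect the quantities in \eqref{converg of twisted (v,w)-K-energy}, \eqref{converg of (v,w)-K-energy}. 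Once the sup-normalization is in force, the chain $\JJ^{\theta}_{\vv,\ww}(\varphi_t) \geq \epsilon\,\JJ^{\omega}_{\vv,\ww}(\varphi_t) \geq c\epsilon\,\JJ_{\vv}(\varphi_t) - C \geq -C$ already extracted in the proof of Proposition \ref{inv and lower bound of (v,w)-K-energy}(\ref{lower bound of (v,w)-K-energy}) supplies the needed bound, completing the argument.
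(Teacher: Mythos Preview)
Your proof is correct and follows essentially the same sandwich strategy as the paper: minimizing property for the upper bound, the decomposition $\MM_{\vv,\ww\cdot\ell_\ext,t}=\MM_{\vv,\ww\cdot\ell_\ext}+\tfrac{1-t}{t}\JJ^\theta_{\vv,\ww}$ plus a lower bound on $\JJ^\theta_{\vv,\ww}(\varphi_t)$ for the lower bound, and then subtraction to get \eqref{converg of (v,w)-K-energy}.

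The only difference worth noting is how the lower bound on $\JJ^\theta_{\vv,\ww}(\varphi_t)$ is obtained. The paper exploits the specific choice of $\theta$ (namely $\tfrac{\tr_{\vv,\omega}\theta}{\ww(m_\omega)}=\underline{\theta}$), which makes $\phi=0$ a critical point of the convex functional $\JJ^\theta_{\vv,\ww}$ and hence a global minimizer; thus $\JJ^\theta_{\vv,\ww}(\varphi_t)\geq \JJ^\theta_{\vv,\ww}(0)$ with no normalization needed. You instead route through the coercivity inequality from Proposition~\ref{inv and lower bound of (v,w)-K-energy}\eqref{lower bound of (v,w)-K-energy}. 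Your detour through a sup-normalization is in fact unnecessary: by the very definition of $\underline{\theta}$ (which equates the topological integrals $\int_X \tr_{\vv,\cdot}(\theta)\,\omega_\cdot^n/n!$ and $\underline{\theta}\int_X \ww\,\omega_\cdot^n/n!$), the functional $\JJ^\theta_{\vv,\ww}$ is already invariant under additive constants. But the extra step does no harm, and the argument goes through either way.
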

\begin{proof}
	By Proposition \ref{solution and minimizer of twisted (v,w)-K-energy}, one has 
	$\MM_{\vv,\ww\cdot\ell_\ext,t_i}(\varphi_i)
	=\inf_{\cE^1_K}\MM_{\vv,\ww\cdot\ell_\ext,t_i}$.
	By Proposition \ref{inv and lower bound of (v,w)-K-energy} \eqref{lower bound of (v,w)-K-energy},  $\inf_{\phi\in\cE^1_{{K}}}\MM_{\vv,\ww\cdot \ell_{\ext}}(\phi)>-\infty$.
	Let $\varphi^\eps\in\cE^1_K$ such that $\MM_{\vv,\ww\cdot\ell_\ext}(\varphi^\eps)\leq\inf_{\phi\in\cE^1_{{K}}}\MM_{\vv,\ww\cdot \ell_{\ext}}(\phi)+\eps$.
	We obtain
	\begin{align}
	\label{limsup of twist (v,w)-K-energy}
		\limsup_{i\>\infty}\MM_{\vv,\ww\cdot\ell_\ext,t_i}(\varphi_i)
		\leq \limsup_{i\>\infty}\MM_{\vv,\ww\cdot\ell_\ext,t_i}(\varphi^\eps)
		=\MM_{\vv,\ww\cdot\ell_\ext}(\varphi^\eps)
		\leq\inf_{\phi\in\cE^1_{{K}}}\MM_{\vv,\ww\cdot \ell_{\ext}}(\phi)+\eps.
	\end{align}
	On the other hand,
	\begin{align*}
		\MM_{\vv,\ww\cdot\ell_\ext,t_i}(\varphi_i)
		=\MM_{\vv,\ww\cdot\ell_\ext}(\varphi_i)
		+\frac{1-t_i}{t_i} \cdot \JJ^{\theta}_{\vv,\ww}(\varphi_i)
		\geq\inf_{\phi\in\cE^1_{{K}}}\MM_{\vv,\ww\cdot \ell_{\ext}}(\phi)
		+\frac{1-t_i}{t_i}\JJ_{\vv,\ww}^\theta(0),
	\end{align*}
	where we take a suitable choice of $\theta:=\omega+\ddbar\psi_{0}$ such that 
	\begin{align*}
		\frac{\tr_{\vv,\omega}\theta}{\ww(m_\omega)}=\underline{\theta},
	\end{align*}
	of which the solution $\phi=0$ is the minimizer of $\JJ_{\vv,\ww}^\theta(\cdot)$.
	Then we obtain
	\begin{align*}
		\liminf_{i\>\infty}\MM_{\vv,\ww\cdot\ell_\ext,t_i}(\varphi_i)
		\geq\inf_{\phi\in\cE^1_{{K}}}\MM_{\vv,\ww\cdot \ell_{\ext}}(\phi).
	\end{align*}
	Together with \eqref{limsup of twist (v,w)-K-energy}, we obtain \eqref{converg of twisted (v,w)-K-energy}.
	
	The remaining part is the \eqref{converg of (v,w)-K-energy}.
	By \eqref{limsup of twist (v,w)-K-energy}, we have
	\begin{align*}
		\inf_{\phi\in\cE^1_{{K}}}\MM_{\vv,\ww\cdot \ell_{\ext}}(\phi)+\eps
		\geq& \MM_{\vv,\ww\cdot\ell_\ext}(\varphi_i)
		+\frac{1-t_i}{t_i} \cdot \JJ^{\theta}_{\vv,\ww}(\varphi_i) \\
		\geq&\MM_{\vv,\ww\cdot\ell_\ext}(\varphi_i)+\frac{1-t_i}{t_i}\JJ_{\vv,\ww}^\theta(0).
	\end{align*}
	Taking the limsup on the right hand side, we obtain \eqref{converg of (v,w)-K-energy}. 
\end{proof}

\begin{cor}
    There exists a constant $C>0$ such that $\JJ_{\vv,\mathbb{T}}(\varphi_i) < C$ for any $i\in\mathbb{N}$.
\end{cor}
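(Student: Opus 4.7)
The plan is to derive this bound by combining the $\GG$-coercivity assumption on $\MM_{\vv,\ww\cdot \ell_{\ext}}$ with the convergence statement \eqref{converg of (v,w)-K-energy} from the preceding proposition, and this should be essentially a one-step argument with no technical obstacles.

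First I would note that the convergence $\MM_{\vv,\ww\cdot \ell_{\ext}}(\varphi_i) \to \inf_{\phi\in\cE^1_{K}}\MM_{\vv,\ww\cdot \ell_{\ext}}(\phi)$ established in \eqref{converg of (v,w)-K-energy}, combined with Proposition \ref{inv and lower bound of (v,w)-K-energy}(1) which guarantees that this infimum is finite, implies that the sequence $\{\MM_{\vv,\ww\cdot \ell_{\ext}}(\varphi_i)\}_{i\in\NN}$ is uniformly bounded from above by some constant $C_1 > 0$.

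Next I would invoke the standing hypothesis of this subsection, namely the $\GG$-coercivity of $\MM_{\vv,\ww\cdot \ell_{\ext}}$: there exist constants $\delta > 0$ and $C_2 > 0$ such that for every $\phi \in \cE^1_K(X,\omega)$,
\begin{equation*}
    \MM_{\vv,\ww\cdot \ell_{\ext}}(\phi) \geq \delta \cdot \JJ_{\vv,\TT}(\phi) - C_2.
\end{equation*}
Applying this to $\phi = \varphi_i$ and rearranging gives
\begin{equation*}
    \JJ_{\vv,\TT}(\varphi_i) \leq \frac{1}{\delta}\bigl(\MM_{\vv,\ww\cdot \ell_{\ext}}(\varphi_i) + C_2\bigr) \leq \frac{C_1 + C_2}{\delta} =: C,
\end{equation*}
which is the desired uniform bound. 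Since the argument uses only the upper bound on $\MM_{\vv,\ww\cdot \ell_{\ext}}(\varphi_i)$ (available without any refined control) and the coercivity inequality directly, there is no meaningful obstacle to overcome; the real analytic content has already been absorbed into Proposition \ref{solution and minimizer of twisted (v,w)-K-energy} and the convergence statement.
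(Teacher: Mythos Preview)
Your proof is correct and follows essentially the same approach as the paper: the paper's proof is the one-line remark that the bound follows from the $\GG$-coercivity assumption together with \eqref{converg of (v,w)-K-energy}, and you have simply written out these two ingredients explicitly.
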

\begin{proof}
    It follows from the assumption of $\GG$-coercivity of weighted $K$-energy and \eqref{converg of (v,w)-K-energy}.
\end{proof}

Let $\phi_i = \sigma_i^*\varphi_i$, such that $\JJ_\vv(\phi_i) = \JJ_{\vv, \mathbb{T}}(\varphi_i)$.

\begin{lemma}
    There exists a constant $C>0$, such that $\HH_\vv(\phi_i) < C$ for any $i\in\mathbb{N}$.
\end{lemma}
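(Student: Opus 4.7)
The plan is to express $\HH_\vv(\phi_i)$ via the Mabuchi functional and then control each remaining term using the boundedness of $\JJ_\vv(\phi_i) = \JJ_{\vv,\TT}(\varphi_i)$. The three key inputs are: the $\GG$-invariance of $\MM_{\vv,\ww\cdot\ell_{\ext}}$, the convergence \eqref{converg of (v,w)-K-energy} along the continuity path, and the standard Lipschitz-type control of weighted energy functionals by the weighted $J$-functional.

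First I would observe that, since $\sigma_i \in \TT \subset \GG$, Proposition \ref{inv and lower bound of (v,w)-K-energy}(\ref{inv of (v,w)-K-energy under gp}) gives
\begin{align*}
    \MM_{\vv,\ww\cdot\ell_{\ext}}(\phi_i) = \MM_{\vv,\ww\cdot\ell_{\ext}}(\sigma_i^*\varphi_i) = \MM_{\vv,\ww\cdot\ell_{\ext}}(\varphi_i),
\end{align*}
so \eqref{converg of (v,w)-K-energy} yields a uniform upper bound $\MM_{\vv,\ww\cdot\ell_{\ext}}(\phi_i) \leq C$ (the infimum being finite by $\GG$-coercivity). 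Rearranging the definition of $\MM_{\vv,\ww\cdot\ell_{\ext}}$, one has
\begin{align*}
    \HH_\vv(\phi_i) = \MM_{\vv,\ww\cdot\ell_{\ext}}(\phi_i) + \EE_\vv^{\Ric(\omega)}(\phi_i) - \EE_{\ww\cdot\ell_{\ext}}(\phi_i) + C_0,
\end{align*}
so the task reduces to bounding $\EE_\vv^{\Ric(\omega)}(\phi_i)$ from above and $\EE_{\ww\cdot\ell_{\ext}}(\phi_i)$ from below.

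Both $\MM_{\vv,\ww\cdot\ell_{\ext}}$ and $\JJ_\vv$ are invariant under constant shifts $\phi \mapsto \phi + c$ (for the Mabuchi functional this is ensured precisely by the choice of $\ell_{\ext}$ as the Futaki-normalized affine function, so that the shift contributions of $\EE_\vv^{\Ric(\omega)}$ and $\EE_{\ww\cdot\ell_{\ext}}$ cancel; for $\JJ_\vv$ it is immediate from the definition). Hence I may replace $\phi_i$ by $\phi_i - \sup_X \phi_i$ and assume $\sup_X \phi_i = 0$ without altering the hypotheses or the conclusion. With this normalization, $L^1$-compactness of the set of normalized $\omega$-psh functions together with the bound $\JJ_\vv(\phi_i) \leq C$ gives $|\EE_\vv(\phi_i)| \leq C$, and then the uniform comparability of the weights $\vv$ and $\ww\cdot\ell_{\ext}$ (in the spirit of Remark \ref{v-top}) upgrades this to $|\EE_{\ww\cdot\ell_{\ext}}(\phi_i)| \leq C$. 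For $\EE_\vv^{\Ric(\omega)}(\phi_i)$ I would decompose $\Ric(\omega) = A\omega - \eta$ with $A \gg 1$ so that $\eta := A\omega - \Ric(\omega) \geq 0$, and apply the standard Lipschitz control of $\EE_\vv^\theta$ by $\d_1(\phi,0)$ (equivalently, by a linear function of $\JJ_\vv(\phi)$ under the chosen normalization) to each positive piece separately, using the weighted Finsler framework of \cite{BWN14, HL20, Lah23}.

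The main obstacle, to the extent there is one, is the clean invocation of the $\d_1$--$\JJ_\vv$ comparability and the Lipschitz bounds for the weighted energy functionals $\EE_\vv^\theta$ and $\EE_\ww$ on $\cE^1_K(X,\omega)$. These are classical in the unweighted case and were extended to the weighted setting in the cited references, so the main content of the proof is really the first observation—that $\GG$-invariance together with \eqref{converg of (v,w)-K-energy} transports the upper bound of the Mabuchi functional from the (uncontrolled) sequence $\varphi_i$ to the normalized sequence $\phi_i$—after which the entropy bound follows by routine estimates on the energy terms.
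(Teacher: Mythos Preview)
Your proposal is correct and follows essentially the same approach as the paper: use the $\GG$-invariance of $\MM_{\vv,\ww\cdot\ell_{\ext}}$ together with \eqref{converg of (v,w)-K-energy} to bound $\MM_{\vv,\ww\cdot\ell_{\ext}}(\phi_i)$, and then control the remaining energy terms by $\JJ_\vv(\phi_i)$. The only minor difference is that the paper bounds the translation-invariant combination $\JJ^{\Ric}_{\vv,\ww\cdot\ell_{\ext}}(\phi_i)=\EE_\vv^{\Ric}(\phi_i)-\EE_{\ww\cdot\ell_{\ext}}(\phi_i)$ directly by $C|\Ric|_\omega\,\JJ_\vv(\phi_i)$, avoiding your normalization step and the separate treatment of the two energy terms.
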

\begin{proof}
    Due to the $\GG$-invariance of weighted $K$-energy as in Proposition \ref{inv and lower bound of (v,w)-K-energy} \eqref{inv of (v,w)-K-energy under gp} and \eqref{converg of (v,w)-K-energy}, we have
    \begin{align*}
    	\sup_i \MM_{\vv,\ww\cdot \ell_{\ext}}(\phi_i)
       =\sup_i \MM_{\vv,\ww\cdot \ell_{\ext}}(\varphi_i)<\infty.
    \end{align*}  
    One has
    \begin{align*}
    	\sup_i|\JJ_{\vv,\ww\cdot\ell_\ext}^\Ric(\phi_i)|
    	\leq C|\Ric|_{\omega}\sup_i\JJ_\vv(\phi_i)<\infty.
    \end{align*}
    By the formula of weighted $K$-energy, we finish the proof.
\end{proof}

\begin{lemma}
\label{sigma-action of couple system}
    Let $u_i$ be the potential function which satisfies $\sigma_i^*\theta = \theta + \ddc u_i$, $\sup_X u_i = 0$. Then
    \begin{align}
         \vv(m_{\phi_i})\omega_{\phi_i}^n
		 =&e^{\tilde F_i}\omega^n, 
         \label{1st equ of phi_i}
		 \\
		\left(\Delta_{\phi_i}-\frac{\vv_{,\alpha}(m_{\phi_i})}{\vv(m_{\phi_i})} J\xi^\alpha\right)\tilde F_i
		 =&-\ell_{\ext}(m_{\phi_i})\frac{\ww(m_{\phi_i})}{\vv(m_{\phi_i})} 
	   +(1-\frac{1}{t})\frac{\tr_{\vv,\phi_i}(\theta_{u_i})}{\vv(m_{\phi_i})}
    \label{2nd equ of phi_i}\\
		&-(1-\frac{1}{t})\underline{\theta}\frac{\ww(m_{\phi_i})}{\vv(m_{\phi_i})} +\frac{\tr_{\vv,\phi_i}\Ric(\omega)}{\vv(m_{\phi_i})}. \nonumber 
    \end{align}
\end{lemma}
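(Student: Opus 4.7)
The plan is to obtain \eqref{1st equ of phi_i}--\eqref{2nd equ of phi_i} by pulling back the original coupled system \eqref{couple system1}--\eqref{couple system2} (satisfied by $\varphi_i$) via the automorphism $\sigma_i\in\TT$, and to absorb the resulting log-determinant correction into a redefinition of the auxiliary function. The first step is to record naturality: since $\sigma_i$ is central in $\GG$ it commutes with every $\xi^\alpha$ and with $J\xi^\alpha$ as a differential operator, and by definition $\omega_{\phi_i}=\sigma_i^*\omega_{\varphi_i}$, hence $m_{\phi_i}^{\xi^\alpha}=m_{\varphi_i}^{\xi^\alpha}\circ\sigma_i$, $\Delta_{\phi_i}(h\circ\sigma_i)=(\Delta_{\varphi_i}h)\circ\sigma_i$, and the same naturality holds for the weighted trace $\tr_{\vv,\phi}$. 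Writing $\sigma_i^*\omega=\omega+\ddc\rho_i$, I will set
\[
\tilde F_i := F_i\circ\sigma_i+\log\tfrac{(\sigma_i^*\omega)^n}{\omega^n},
\]
where $F_i$ denotes the auxiliary function solving \eqref{couple system1}--\eqref{couple system2} for $\varphi_i$.

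Pulling back \eqref{couple system1} then gives immediately $\vv(m_{\phi_i})\omega_{\phi_i}^n=e^{F_i\circ\sigma_i}(\sigma_i^*\omega)^n=e^{\tilde F_i}\omega^n$, which is \eqref{1st equ of phi_i}. For the second equation I pull back \eqref{couple system2}. The LHS becomes
\[
\Bigl(\Delta_{\phi_i}-\tfrac{\vv_{,\alpha}(m_{\phi_i})}{\vv(m_{\phi_i})}J\xi^\alpha\Bigr)(F_i\circ\sigma_i)=\Bigl(\Delta_{\phi_i}-\tfrac{\vv_{,\alpha}}{\vv}J\xi^\alpha\Bigr)\tilde F_i-\Bigl(\Delta_{\phi_i}-\tfrac{\vv_{,\alpha}}{\vv}J\xi^\alpha\Bigr)\log\tfrac{(\sigma_i^*\omega)^n}{\omega^n}.
\]
On the RHS, the terms involving $-\ell_{\ext}(m_{\varphi_i})\ww/\vv$ and $-(1-1/t)\underline{\theta}\,\ww/\vv$ pull back in the obvious way. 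For the $\theta$-term, the assumption $\sigma_i^*\theta=\theta+\ddc u_i=\theta_{u_i}$ together with naturality yields $\sigma_i^*\tr_{\vv,\varphi_i}(\theta)=\tr_{\vv,\phi_i}(\theta_{u_i})$. For the Ricci term, naturality of the Ricci form under biholomorphisms gives $\sigma_i^*\Ric(\omega)=\Ric(\sigma_i^*\omega)=\Ric(\omega)-\ddc\log\tfrac{(\sigma_i^*\omega)^n}{\omega^n}$, so the pulled-back Ricci contribution is $\frac{\tr_{\vv,\phi_i}\Ric(\omega)}{\vv(m_{\phi_i})}-\frac{\tr_{\vv,\phi_i}\bigl(\ddc\log\tfrac{(\sigma_i^*\omega)^n}{\omega^n}\bigr)}{\vv(m_{\phi_i})}$.

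The crucial cancellation is the identity already used in Subsection \ref{Weighted-cscK equations},
\[
\tfrac{\tr_{\vv,\phi}(\ddc\psi)}{\vv(m_\phi)}=\Bigl(\Delta_\phi-\tfrac{\vv_{,\alpha}(m_\phi)}{\vv(m_\phi)}J\xi^\alpha\Bigr)\psi,
\]
applied to $\psi=\log\tfrac{(\sigma_i^*\omega)^n}{\omega^n}$: it shows that the extra log-determinant term appearing on the LHS (from the shift $F_i\circ\sigma_i\mapsto\tilde F_i$) exactly cancels the extra term appearing on the RHS (from the transformation of the Ricci form). After this cancellation the identity collapses to \eqref{2nd equ of phi_i}.

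Main obstacle: there is no genuine analytic difficulty; the proof is a piece of equivariant bookkeeping. The care is in collecting, before beginning, the three simultaneous transformation laws $\sigma_i^*\omega_{\varphi_i}=\omega_{\phi_i}$, $\sigma_i^*\theta=\theta_{u_i}$, and $\sigma_i^*\Ric(\omega)=\Ric(\omega)-\ddc\log\tfrac{(\sigma_i^*\omega)^n}{\omega^n}$, and then recognizing that the single weighted identity relating $\tr_{\vv,\phi}(\ddc\psi)/\vv$ to $(\Delta_\phi-\tfrac{\vv_{,\alpha}}{\vv}J\xi^\alpha)\psi$ is precisely what makes the log-determinant correction cancel on both sides, thereby producing the claimed system.
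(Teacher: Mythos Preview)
Your proof is correct and follows essentially the same route as the paper's: pull back \eqref{couple system1}--\eqref{couple system2} by $\sigma_i$, define $\tilde F_i=F_i\circ\sigma_i+\log\frac{(\sigma_i^*\omega)^n}{\omega^n}$, and observe that the log-determinant correction introduced on the left exactly cancels the difference $\tr_{\vv,\phi_i}\bigl(\Ric(\sigma_i^*\omega)-\Ric(\omega)\bigr)/\vv$ on the right. The only cosmetic difference is that the paper re-derives the cancellation identity in the middle of the proof (citing \eqref{Jxi log volume}), whereas you invoke directly the identity $\tr_{\vv,\phi}(\ddc\psi)/\vv=(\Delta_\phi-\tfrac{\vv_{,\alpha}}{\vv}J\xi^\alpha)\psi$ already recorded in Subsection~\ref{Weighted-cscK equations}; this makes your write-up a bit cleaner.
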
	
	\begin{proof}
     Recall that $\phi_i=\sigma_i^*\varphi_i$. Then
    \begin{align*}
    	\sigma_i^*(\omega_{\varphi_i}^n)
    	=(\sigma_i^*\omega_{\varphi_i})^n
    	=\omega_{\phi_i}^n,\quad
    	\sigma_i^*(\vv(m_{\varphi_i})\omega_{\varphi_i}^n)
    	=\vv(m_{\phi_i})\omega_{\phi_i}^n
    \end{align*}
    and
    \begin{align*}
    	\sigma_i^*(e^{F_i}\omega^n)
    	=e^{F_i\circ\sigma_i}(\sigma_i^*\omega)^n.
    \end{align*}
    Taking pullback in the both sides of the first equation \eqref{couple system1},
    we obtain
    \begin{align*}
    	\vv(m_{\phi_i})\omega_{\phi_i}^n
    	=\sigma_i^*(e^{F_i}\omega^n)
    	=e^{\tilde F_i}\omega^n
    \end{align*}
    where $\tilde F_i=F_i\circ\sigma_i+\log(\frac{\sigma_i^*\omega^n}{\omega^n})$.
    
    Take $\sigma_i^*$-action on the second equation \eqref{couple system2}, one has
    \begin{align}
    \label{sigma_i-action of L.H.S.}
    	\text{L.H.S.}
    	=\sigma_i^*\left(\left(\Delta_{\varphi_i}-\frac{\vv_{,\alpha}(m_{\varphi_i})}{\vv(m_{\varphi_i})} J\xi^\alpha\right)F_i \right)
    	=\left(\Delta_{\phi_i}-\frac{\vv_{,\alpha}(m_{\phi_i})}{\vv(m_{\phi_i})} J\xi^\alpha\right)F_i\circ\sigma_i,
    \end{align}
    and
    \begin{align}
    \label{sigma_i-action of R.H.S.}
    	\text{R.H.S.}
    	=-\ell_{\ext}(m_{\phi_i})\frac{\ww(m_{\phi_i})}{\vv(m_{\phi_i})} 
    	+(1-\frac{1}{t})\frac{\tr_{\vv,\phi_i}(\sigma_i^*\theta)}{\vv(m_{\phi_i})}
    	-(1-\frac{1}{t})\underline{\theta}\frac{\ww(m_{\phi_i})}{\vv(m_{\phi_i})}
     +\frac{\tr_{\vv,\phi_i}\Ric(\sigma_i^*\omega)}{\vv(m_{\phi_i})}.
    \end{align}
    On the other hand,
    \begin{align}
    	&\tr_{\vv,\phi_i}(\Ric(\sigma_i^*\omega)-\Ric(\omega)) \nonumber \\
    	=&\vv(m_{\phi_i})\tr_{\phi_i}\left(\ddbar\log\frac{\omega^n}{\sigma_i^*\omega^n} \right)
    	+\langle\d\vv(m_{\phi_i}),m_{\Ric(\sigma_i^*\omega)}-m_{\Ric(\omega)}\rangle  \nonumber  \\
    	=&\vv(m_{\phi_i})\Delta_{\phi_i}\log\frac{\omega^n}{\sigma_i^*\omega^n}
    	+\vv_{,\alpha}(m_{\phi_i})J\xi^\alpha\left(\log\frac{\omega^n}{\sigma_i^*\omega^n}\right),
    	\label{tr_{v,phi_i}(diff of Ric)}
    \end{align}
    where we have used \eqref{Jxi log volume} for the last equality.
    Compare \eqref{sigma_i-action of L.H.S.}, \eqref{sigma_i-action of R.H.S.} and \eqref{tr_{v,phi_i}(diff of Ric)}, we derive \eqref{2nd equ of phi_i}.
	\end{proof}

\begin{prop}
\label{integrability of f_*}
     Let $f_i = \frac{1-t_i}{t_i}u_i$. Let $\alpha = \alpha (X, \omega)$ be the alpha-invariant of the K\"ahler class$[\omega]$. Then there exists $C>0$ such that $\int_X e^{-p\cdot f_i} \omega^n < C$ for any $p < \frac{t_i\alpha}{1-t_i}$.
     Moreover, we have $e^{-f_i}\>1$ in $L^p(\omega^n)$ as $t_i\>1$ for any $p<\infty$. 
\end{prop}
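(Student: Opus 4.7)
The strategy is to reduce both statements to the Tian $\alpha$-invariant estimate applied to $u_i$. Since $\theta$ is a $K$-invariant K\"ahler form in the class $[\omega]$, write $\theta = \omega + \ddc \psi_0$ with $\psi_0$ smooth (and $K$-invariant by averaging). Setting $v_i := u_i + \psi_0 - \sup_X(u_i + \psi_0)$ produces an $\omega$-psh function with $\sup_X v_i = 0$, and the difference $u_i - v_i$ is uniformly bounded on $X$ by a constant depending only on $\|\psi_0\|_{C^0}$. The definition of $\alpha = \alpha(X,\omega)$ gives $\int_X e^{-\alpha' v_i} \omega^n \leq C_{\alpha'}$ for every $\alpha' < \alpha$, and translating by the bounded quantity $u_i - v_i$ upgrades this to $\int_X e^{-\alpha' u_i} \omega^n \leq C'_{\alpha'}$ with $C'_{\alpha'}$ independent of $i$.

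The first assertion then follows by taking $\alpha' = p\,(1-t_i)/t_i$ in the above bound: the condition $\alpha' < \alpha$ is exactly $p < t_i \alpha/(1-t_i)$, so $\int_X e^{-p f_i}\, \omega^n = \int_X e^{-\alpha' u_i}\, \omega^n \leq C'_{\alpha'}$ is bounded by a constant depending on $p$ (which in fact stays controlled by $C'_{\alpha/2}$ once $t_i$ is close to $1$).

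For the $L^p$ convergence, the key additional ingredient is a uniform bound $\|u_i\|_{L^1(\omega^n)} \leq C$, which comes from the classical compactness of normalized $\omega$-psh functions applied to $\{v_i\}$ together with the uniform bound on $u_i - v_i$. Then $\|f_i\|_{L^1(\omega^n)} = \frac{1-t_i}{t_i}\|u_i\|_{L^1(\omega^n)} \to 0$, so $f_i \to 0$ in measure and hence $e^{-f_i} \to 1$ in measure. To upgrade this to $L^p$ convergence I would invoke Vitali's convergence theorem: since $f_i \leq 0$ gives $(e^{-f_i}-1)^p \leq e^{-p f_i}$, and the first assertion applied with $2p$ in place of $p$ (valid once $2p < t_i\alpha/(1-t_i)$) supplies a uniform $L^2$-bound on $e^{-pf_i}$, H\"older's inequality delivers the required equiintegrability. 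The combination yields $e^{-f_i} \to 1$ in $L^p(\omega^n)$ for every $p < \infty$.

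No step is genuinely hard; the only conceptually delicate point is verifying that the $\alpha$-invariant bound transfers cleanly from $\omega$-psh to $\theta$-psh functions with $i$-independent constants, which is handled by the explicit smooth potential $\psi_0$ and the normalization $\sup_X u_i = 0$. Everything thereafter is standard functional-analytic bookkeeping.
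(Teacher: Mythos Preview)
Your argument is correct. The first assertion matches the paper's proof, with the added care of explicitly passing from $\theta$-psh to $\omega$-psh via the fixed potential $\psi_0$; the paper simply invokes Tian's $\alpha$-invariant directly.

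For the second assertion your route is genuinely different and in fact more elementary. The paper obtains $\int_X |f_i|\,\omega^n \to 0$ by appealing to the Mabuchi-energy convergence established just before (the inequality chain for $\MM_{\vv,\ww\cdot\ell_\ext,t_i}(\varphi_i)$ forces $(1-t_i)\JJ_\vv(\varphi_i)\to 0$, from which the $L^1$ smallness of $f_i$ is extracted). You bypass this entirely: the uniform $L^1$ bound $\|u_i\|_{L^1}\leq C$ comes straight from the classical compactness of sup-normalized $\omega$-psh functions, so $\|f_i\|_{L^1}=\tfrac{1-t_i}{t_i}\|u_i\|_{L^1}\to 0$ follows immediately from $t_i\to 1$ without any input from the variational framework. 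The concluding step---convergence in measure plus uniform integrability via the $\alpha$-invariant bound and H\"older---is essentially the same in both arguments (the paper phrases it by splitting over a good set $A_\epsilon$, you invoke Vitali). Your approach is more self-contained for this lemma; the paper's has the virtue of reusing machinery already in place.
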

\begin{proof}
    Given $p>1$, suppose $t_i$ is sufficiently close to $1$ such that $p\frac{1-t_i}{t_i}<\alpha$, due to Tian's $\alpha$-invariant, then
    \begin{align*}
    	\int_Xe^{-pf_i}\omega^n
    	=\int_Xe^{-p\frac{1-t_i}{t_i}u_i}
    	<C.
    \end{align*}

    By \eqref{converg of twisted (v,w)-K-energy}, we have 
    \begin{align*}
    	\inf_{\phi\in\cE^1_{{K}}}\MM_{\vv,\ww\cdot \ell_{\ext}}(\phi)+\eps_i
    	\geq& \MM_{\vv,\ww\cdot \ell_{\ext},t_i}(\varphi_i)
    	=\MM_{\vv,\ww\cdot\ell_\ext}(\varphi_i)
		+\frac{1-t_i}{t_i} \cdot \JJ^{\theta}_{\vv,\ww}(\varphi_i)  \\
		\geq&\inf_{\phi\in\cE^1_{{K}}}\MM_{\vv,\ww\cdot \ell_{\ext}}(\phi)
		+\frac{1-t_i}{t_i}\JJ_\vv(\varphi_i).
    \end{align*}
    It follows that $(1-t_i)\JJ_\vv(\varphi_i)\>0$. In particular, $\int_X |f_i| \omega^n \rightarrow 0$.
    Then for any $\epsilon>0$, for $i$ sufficiently large, there exists $A_\epsilon\subset X$, such that $|f_i|\leq \epsilon$ on $A_\epsilon$, and $Vol_\omega(X\setminus A_\epsilon) \leq \epsilon$.
    Then 
    \begin{align*}
        \int_X |e^{-f_i} - 1|^p \omega^n &\leq C\cdot \big(\epsilon^p\cdot Vol_\omega(X) + \int_{X\setminus A_\epsilon} 1+e^{-p f_i} \omega^n \big) \\
        &\leq C \big( \epsilon + (\int_{X\setminus A_\epsilon} e^{-pqf_i}\omega^n)^{\frac{1}{q}}\cdot \epsilon^{\frac{1}{q^*}} \big) \\
        &\leq C \epsilon^{\frac{1}{q^*}}.
    \end{align*}
    Then $e^{-f_i}\>1$ in $L^p(\omega^n)$ as $t_i\>1$ for any $p<\infty$.
\end{proof}

Applying Theorem \ref{gradient of F} to equations \eqref{1st equ of phi_i}, \eqref{2nd equ of phi_i}, we obtain
\begin{prop} 
    By apriori estimates, $\phi_i\in W^{2,p}, \tilde F_i+f_i-\log(\vv(m_{\phi_i})) \in W^{1,p}$.
\end{prop}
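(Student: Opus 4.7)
The plan is to recognize that the coupled system \eqref{1st equ of phi_i}, \eqref{2nd equ of phi_i} for $(\phi_i,\tilde F_i)$ is a special instance of the modified system \eqref{modified couple system1}, \eqref{modified couple system2} studied in Section \ref{Apriori estimates}, and then simply verify the hypotheses of Theorems \ref{C^0-estimate}, \ref{integral C^2} and \ref{gradient of F} uniformly in $i$ as $t_i\to 1$.

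First I set up the dictionary. Taking $f_*:=f_i=\tfrac{1-t_i}{t_i}u_i$, a one-line algebra gives $(1-\tfrac{1}{t_i})\ddc u_i=-\ddc f_i$, so the term $(1-\tfrac{1}{t_i})\tr_{\vv,\phi_i}(\theta_{u_i})$ in \eqref{2nd equ of phi_i} splits as $(1-\tfrac{1}{t_i})\tr_{\vv,\phi_i}(\theta)-\tr_{\vv,\phi_i}(\ddc f_i)$, reproducing the structure of \eqref{modified couple system2} with $f_*=f_i$. Since $\sup_X u_i=0$ and $\tfrac{1-t_i}{t_i}>0$ one has $\sup_X f_i=0$, and Proposition \ref{integrability of f_*} furnishes $\int_X e^{-pf_i}\omega^n\le C(p)$ for any prescribed $p$ once $t_i$ is close enough to $1$. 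I also normalize $\sup_X\phi_i=0$, which is permissible by shifting $\tilde F_i$ accordingly, and I use that $\eta_{t_i}=(1-\tfrac{1}{t_i})\theta+\Ric(\omega)$ is uniformly bounded above and below (both $\theta$ and $\Ric(\omega)$ are fixed smooth forms, and $1-\tfrac{1}{t_i}\to 0$).

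Having verified these inputs, the apriori estimates apply in sequence. Theorem \ref{C^0-estimate} needs the entropy bound $\int_X\tilde F_i\,e^{\tilde F_i}\omega^n$, which is exactly $\HH_\vv(\phi_i)$ and has been shown to be uniformly bounded, together with the integrability of $e^{-pf_i}$ and the two-sided bound on $\eta_{t_i}$; it therefore yields a uniform $C^0$-bound on $\phi_i$ and uniform upper and lower bounds on $\tilde F_i+f_i$. Theorem \ref{integral C^2}, whose hypotheses are now exactly what we have (and where the standing log-concavity of $\vv$ is used), then gives a uniform $L^p$-bound on $n+\Delta\phi_i$, hence $\phi_i\in W^{2,p}$ with constants independent of $i$. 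Finally Theorem \ref{gradient of F} applies and its second conclusion delivers $\|\nabla^{\phi_i}(\tilde F_i+f_i)\|_{L^p(\omega^n)}\le C_p$; combined with $|\nabla^{\phi_i}\log\vv(m_{\phi_i})|_{\phi_i}^2\le C\tr\omega_{\phi_i}$ and the just-obtained $W^{2,p}$-control on $\phi_i$, this gives $\tilde F_i+f_i-\log(\vv(m_{\phi_i}))\in W^{1,p}$ uniformly in $i$.

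There is no conceptually hard step; the only subtlety is to make every constant uniform as $t_i\to 1$. The only place where uniformity could threaten is the integrability bound in Proposition \ref{integrability of f_*}, valid for $p<\tfrac{t_i\alpha}{1-t_i}$, but this threshold diverges as $t_i\to 1$, so any fixed $p$ is eventually admissible. Everything else is a mechanical assembly of Theorems \ref{C^0-estimate}, \ref{integral C^2} and \ref{gradient of F}.
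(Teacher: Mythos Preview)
Your proposal is correct and follows exactly the approach the paper intends: the paper's own ``proof'' is the single sentence ``Applying Theorem \ref{gradient of F} to equations \eqref{1st equ of phi_i}, \eqref{2nd equ of phi_i}, we obtain\ldots'', and you have simply unpacked this by verifying the hypotheses of Theorems \ref{C^0-estimate}, \ref{integral C^2}, \ref{gradient of F} via the dictionary $f_*=f_i$ and the preceding entropy bound. Two cosmetic remarks: shifting $\phi_i$ by a constant leaves \eqref{1st equ of phi_i} unchanged, so no compensating shift of $\tilde F_i$ is needed; and in the last step it is slightly more direct to use the main conclusion $\sup_X|\nabla^{\phi_i}W_i|_{\phi_i}\le C$ of Theorem \ref{gradient of F} together with $|\nabla W_i|^2\le|\nabla^{\phi_i}W_i|_{\phi_i}^2\,\tr\omega_{\phi_i}$ and the $L^p$ bound on $\tr\omega_{\phi_i}$, rather than going through the ``Furthermore'' clause and then subtracting $\log\vv$.
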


Hence we may take a subsequence of $\phi_i$ (without relabeled), and a function $\phi_*\in W^{2,p}$ for any $p<\infty$,
and another function $F_*\in W^{1,p}$ for any $p<\infty$, such that
\begin{align}
	&\phi_i\>\phi_* \text{ in } C^{1,\alpha} \text{ for any }0<\alpha<1 \text{ and }\ddbar\phi_i\>\ddbar\phi_* \text{ weakly in }L^p, \label{W^2,p convergence of phi} \\
	&\tilde F_i-\log\vv(m_{\phi_i})+f_i\>F_* \text{ in } C^{\alpha} \text{ for any }0<\alpha<1 \text{ and } 
	\label{W^1,p convergence} \\
	&\qquad\qquad\qquad\qquad\nabla(\tilde F_i-\log\vv(m_{\phi_i})+f_i)\>\nabla F_* \text{ weakly in }L^p. \nonumber
\end{align}
It follows from \cite[Lemma 4.22]{CC21b} that
\begin{align*}
	\omega_{\phi_i}^k\>\omega_{\phi_i}^k \text{ weakly in }L^p \text{ for any }1\leq k\leq n \text{ and } p<\infty.
\end{align*} 
\begin{prop}
	Let $\phi_*$, $F_*$ be the limits as above. Then $\phi_*$ is a weak solution of the weighted cscK equation in the following sense:
	\begin{enumerate}[$(1)$]
		\item $\omega_{\phi_*}^n=e^{F_*}\omega^n$,
		\item \label{2nd equ of limit func}
		For any $u\in C^\infty(X)$, we have
		\begin{align}
		   	\int_XF_*\ddbar u\wedge\omega_{\phi_*}^{n-1}
		   	=&\int_Xu\frac{\vv_{,\alpha}(m_{\phi_*})}{\vv(m_{\phi_*})}J\xi^\alpha(F_*)\omega_{\phi_*}^n	
		   	-\int_X\log\vv(m_{\phi_*})\ddbar u\wedge\omega_{\phi_*}^{n-1} \nonumber \\ 
		   	&-\int_Xu\left(\frac{\vv_{,\alpha}(m_{\phi_*})\vv_{,\beta}(m_{\phi_*})}{\vv(m_{\phi_*})^2}\langle\xi^\alpha,\xi^\beta\rangle_{\phi_*}  
    	+\ell_{\ext}(m_{\phi_*})\frac{\ww(m_{\phi_*})}{\vv(m_{\phi_*})}
    	 \right)\omega_{\phi_*}^n \nonumber \\
    	 &+\int_Xu\Ric(\omega)\wedge\omega_{\phi_*}^{n-1}
         -\int_Xu\frac{\vv_{,\alpha}(m_{\phi_*})}{\vv(m_{\phi_*})}\Delta_\omega m_\omega^{\xi^\alpha}\omega_{\phi_*}^n.
    \label{2nd equ of limit function F_*}
	  \end{align}
	\end{enumerate}
\end{prop}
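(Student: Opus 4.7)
The plan is to pass to the limit in \eqref{1st equ of phi_i}, \eqref{2nd equ of phi_i} along the subsequence underlying \eqref{W^2,p convergence of phi}, \eqref{W^1,p convergence}, exploiting Proposition \ref{integrability of f_*}. For assertion (1), I rewrite \eqref{1st equ of phi_i} as $\omega_{\phi_i}^n = e^{\tilde F_i - \log\vv(m_{\phi_i})}\omega^n$ and decompose
\[
\tilde F_i - \log\vv(m_{\phi_i}) = \bigl(\tilde F_i - \log\vv(m_{\phi_i}) + f_i\bigr) - f_i.
\]
The first summand converges uniformly to $F_*$ in $C^\alpha$ by \eqref{W^1,p convergence}, while $f_i = \frac{1-t_i}{t_i}u_i \to 0$ in every $L^p$ by Proposition \ref{integrability of f_*} combined with the uniform $L^p$-bound on $\theta$-psh functions with $\sup = 0$. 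Together with the uniform $C^0$-estimate of $\tilde F_i - \log\vv(m_{\phi_i})$ from Theorem \ref{C^0-estimate}, this yields $e^{\tilde F_i - \log\vv(m_{\phi_i})} \to e^{F_*}$ in $L^p$. Testing against an arbitrary continuous $h$ and using the weak convergence $\omega_{\phi_i}^n \to \omega_{\phi_*}^n$ from \cite[Lemma 4.22]{CC21b} concludes (1).

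For assertion (2), I multiply \eqref{2nd equ of phi_i} by $u\,\omega_{\phi_i}^n/n!$ and integrate over $X$; integration by parts on the Laplacian term gives
\[
\int_X u\,\Delta_{\phi_i}\tilde F_i\,\frac{\omega_{\phi_i}^n}{n!} = \int_X \tilde F_i\,\ddbar u\wedge\frac{\omega_{\phi_i}^{n-1}}{(n-1)!}.
\]
To avoid differentiating $\log\vv(m_{\phi_i})$ directly (whose gradient would involve second derivatives of $\phi_i$ available only weakly in $L^p$), I apply the algebraic identity $\frac{\vv_{,\alpha}}{\vv}J\xi^\alpha(\log\vv(m_\phi)) = -\frac{\vv_{,\alpha}\vv_{,\beta}}{\vv^2}\langle\xi^\alpha,\xi^\beta\rangle_\phi$ derived in the computation leading to \eqref{Jxi(F')}, yielding
\[
\frac{\vv_{,\alpha}}{\vv}J\xi^\alpha(\tilde F_i) = \frac{\vv_{,\alpha}}{\vv}J\xi^\alpha\bigl(\tilde F_i - \log\vv(m_{\phi_i}) + f_i\bigr) - \frac{\vv_{,\alpha}\vv_{,\beta}}{\vv^2}\langle\xi^\alpha,\xi^\beta\rangle_{\phi_i} - \frac{\vv_{,\alpha}}{\vv}J\xi^\alpha(f_i).
\]

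I then pass to the limit term by term using: strong $C^\alpha$-convergence of $\tilde F_i - \log\vv(m_{\phi_i}) + f_i \to F_*$ together with weak $L^p$-convergence of its gradient to $\nabla F_*$; the facts $f_i \to 0$ in $L^p$ and $J\xi^\alpha(f_i) = \frac{1-t_i}{t_i}J\xi^\alpha(u_i) \to 0$ in $L^q$ for $q<2$ (using the standard $W^{1,q}$-bound for $\theta$-psh potentials); uniform convergence $m_{\phi_i} \to m_{\phi_*}$ in $C^\alpha$ (hence uniform convergence of all weight factors $\vv, \ww, \ell_{\ext}$ evaluated at $m_{\phi_i}$); and weak $L^p$-convergence $\omega_{\phi_i}^k \to \omega_{\phi_*}^k$ for $1 \leq k \leq n$ from \cite[Lemma 4.22]{CC21b}. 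The $(1-1/t_i)$-terms on the right-hand side of \eqref{2nd equ of phi_i} vanish in the limit: $\int_X u\,\tr_{\vv,\phi_i}(\sigma_i^*\theta)\,\omega_{\phi_i}^n$ stays bounded uniformly in $i$ because $\sigma_i^*\theta = \theta + \ddbar u_i$ with $u_i$ uniformly bounded in $L^p$, and one integration by parts moves $\ddbar$ from $u_i$ onto the smooth $u$, while $1 - 1/t_i \to 0$. Expanding $\tilde F_i = (\tilde F_i - \log\vv(m_{\phi_i}) + f_i) + \log\vv(m_{\phi_i}) - f_i$ in the term $\int_X \tilde F_i\,\ddbar u\wedge\omega_{\phi_i}^{n-1}$ and collecting all contributions yields, after rearrangement, exactly \eqref{2nd equ of limit function F_*}. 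The main obstacle is handling products of two independently-weakly-convergent quantities, such as $\langle\xi^\alpha,\xi^\beta\rangle_{\phi_i}\cdot\omega_{\phi_i}^n$; this is resolved by viewing such an expression as a polynomial in $\omega_{\phi_i}$ evaluated on the fixed Killing vector fields, whose weak $L^p$-limit equals the corresponding polynomial in $\omega_{\phi_*}$ by \cite[Lemma 4.22]{CC21b}, and then pairing with the smooth bounded test function $u$.
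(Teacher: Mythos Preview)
Your proof is correct and follows essentially the same route as the paper: both arguments pass to the limit in the integrated form of \eqref{2nd equ of phi_i} by isolating $W_i=\tilde F_i+f_i-\log\vv(m_{\phi_i})$, using its $C^\alpha$-convergence to $F_*$ together with the weak $L^p$-convergence of $\omega_{\phi_i}^k$ from \cite[Lemma 4.22]{CC21b}, and killing the $(1-1/t_i)$-terms; the only organisational difference is that the paper rewrites the PDE so that $W_i$ appears on the left before integrating (absorbing the $\ddc u_i$-contribution into $\Delta_{\phi_i}f_i$), whereas you integrate first and split $\tilde F_i=W_i+\log\vv-f_i$ afterwards, handling the $\theta_{u_i}$-term directly by integration by parts. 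One small correction: Theorem \ref{C^0-estimate} bounds $\tilde F_i+f_i$ (equivalently $W_i$), not $\tilde F_i-\log\vv(m_{\phi_i})$ itself, so for (1) you should argue exactly as the paper does, namely $e^{\tilde F_i-\log\vv}=e^{W_i}e^{-f_i}$ with $e^{W_i}\to e^{F_*}$ uniformly and $e^{-f_i}\to 1$ in $L^p$ by Proposition \ref{integrability of f_*}.
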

\begin{proof}
	For each $i$, we have $\omega_{\phi_i}^n=e^{\tilde F_i-\log\vv(m_{\phi_i})}\omega^n$.
	By \eqref{W^2,p convergence of phi}, we have $\omega_{\phi_i}^n\>\omega_{\phi_*}^n$ converge weakly in $L^p$ for any $p<\infty$. 
	Note that 
	$e^{\tilde F_i-\log\vv(m_{\phi_i})}\omega^n
	=e^{\tilde F_i-\log\vv(m_{\phi_i})+f_i}e^{-f_i}\omega^n
	$.
	Since $\tilde F_i-\log\vv(m_{\phi_i})+f_i$ is uniformly bounded and converges to $F_*$, then $e^{F_i-\log\vv(m_{\phi_i})+f_i}\>e^{F_*}$ in $L^p$ for any $p<\infty$. 
	On the other hand, by Proposition \ref{integrability of f_*}, one has $e^{-f_i}\>1$ in $L^p$ for any $p<\infty$. 
    Thus, $e^{\tilde F_i-\log\vv(m_{\phi_i})}\>e^{F_*}$ in $L^p$ for any $p<\infty$. 
    We obtain the first equation $\omega_{\phi_*}^n=e^{F_*}\omega^n$.
    
    To obtain the second equation, by \eqref{Lap of logv(m_phi)},  we first re-write the equation \eqref{2nd equ of phi_i} to
    \begin{align}
    	&\Delta_{\phi_i}(\tilde F_i+f_i-\log\vv(m_{\phi_i})) \nonumber \\
    	=&\Delta_{\phi_i}(\tilde F_i+f_i)
    	-\Delta_{\phi_i}\log\vv(m_{\phi_i}) \nonumber \\
    	=&\frac{\vv_{,\alpha}(m_{\phi_i})}{\vv(m_{\phi_i})}J\xi^\alpha(\tilde F_i+f_i-\log\vv(m_{\phi_i}))
    	+\frac{\vv_{,\alpha}(m_{\phi_i})}{\vv(m_{\phi_i})}J\xi^\alpha(\log\vv(m_{\phi_i}))-\Delta_{\phi_i}\log\vv(m_{\phi_i}) \\
    	&-\ell_{\ext}(m_{\phi_i})\frac{\ww(m_{\phi_i})}{\vv(m_{\phi_i})} 
	   +(1-\frac{1}{t_i})\frac{\tr_{\vv,\phi_i}(\theta)}{\vv(m_{\phi_i})}
		-(1-\frac{1}{t_i})\underline{\theta}\frac{\ww(m_{\phi_i})}{\vv(m_{\phi_i})} +\frac{\tr_{\vv,\phi_i}\Ric(\omega)}{\vv(m_{\phi_i})}. \nonumber 
    \end{align}
    Denote $W_i=\tilde F_i+f_i-\log\vv(m_{\phi_i})$.
    For any $u\in C^\infty(X)$, one has
    \begin{align*}
    	&\int_XW_i\ddbar u\wedge\omega_{\phi_i}^{n-1} \\
    	=&\int_Xu\frac{\vv_{,\alpha}(m_{\phi_i})}{\vv(m_{\phi_i})}J\xi^\alpha(W_i)\omega_{\phi_i}^n
    	-\int_X\log\vv(m_{\phi_i})\ddbar  u\wedge\omega_{\phi_i}^{n-1}
    	 \\
    	&-\int_Xu
    	\left(\frac{\vv_{,\alpha}(m_{\phi_i})\vv_{,\beta}(m_{\phi_i})}{\vv(m_{\phi_i})^2}\langle\xi^\alpha,\xi^\beta\rangle_{\phi_i} 
    	+\ell_{\ext}(m_{\phi_i})\frac{\ww(m_{\phi_i})}{\vv(m_{\phi_i})}
    	+(1-\frac{1}{t_i})\underline{\theta}\frac{\ww(m_{\phi_i})}{\vv(m_{\phi_i})} \right)\omega_{\phi_i}^n \\
    	&+\int_Xu\left((1-\frac{1}{t_i})\frac{\tr_{\vv,\phi_i}(\theta)}{\vv(m_{\phi_i})}+\frac{\tr_{\vv,\phi_i}\Ric(\omega)}{\vv(m_{\phi_i})}\right)\omega_{\phi_i}^n.
    \end{align*}
    For the left hand side, since $W_i\>F_*$ strongly in $L^p$, $\omega_{\phi_i}^{n-1}\>\omega_{\phi_*}^{n-1}$ weakly in $L^p$ for any $p<\infty$, we obtain
    \begin{align*}
    	\int_XW_i\ddbar u\wedge\omega_{\phi_i}^{n-1}
    	\>\int_XF_*\ddbar u\wedge\omega_{\phi_*}^{n-1}.
    \end{align*} 
    For the right hand side, by \eqref{W^1,p convergence}, \eqref{W^2,p convergence of phi}, $\phi_i\>\phi_*$ in $C^{1,\alpha}$ for any $0<\alpha<1$. Then following the construction in the proof of \cite[Proposition 10]{HL20}, we have
    \begin{align*}
        \frac{\vv_{,\alpha}(m_{\phi_i})\vv_{,\beta}(m_{\phi_i})}{\vv(m_{\phi_i})^2}\omega_{\phi_i}^n
        \> \frac{\vv_{,\alpha}(m_{\phi_*})\vv_{,\beta}(m_{\phi_*})}{\vv(m_{\phi_*})^2}\omega_{\phi_*}^n
    \end{align*}
    converges weakly as signed measures. 
    In addition, $J\xi^\alpha(W_i)\in L^p$ and $\langle\xi^\alpha,\xi^\beta\rangle_{\phi_i}\in L^p$ for any $p<\infty$.
    Thus,
    we have 
    \begin{align*}
    \int_X u\frac{\vv_{,\alpha}(m_{\phi_i})}{\vv(m_{\phi_i})}J\xi^\alpha(W_i)\omega_{\phi_i}^n
    \>\int_Xu\frac{\vv_{,\alpha}(m_{\phi})}{\vv(m_{\phi})}J\xi^\alpha(F_*)\omega_{\phi_*}^n	
    \end{align*}
    and
    \begin{align*}
    &-\int_Xu\left(\frac{\vv_{,\alpha}(m_{\phi_i})\vv_{,\beta}(m_{\phi_i})}{\vv(m_{\phi_i})^2}\langle\xi^\alpha,\xi^\beta\rangle_{\phi_i}  
    	+\ell_{\ext}(m_{\phi_i})\frac{\ww(m_{\phi_i})}{\vv(m_{\phi_i})}
    	+(1-\frac{1}{t_i})\underline{\theta}\frac{\ww(m_{\phi_i})}{\vv(m_{\phi_i})} \right)\omega_{\phi_i}^n \\
    	&\longrightarrow 
    	-\int_Xu\left(\frac{\vv_{,\alpha}(m_{\phi_*})\vv_{,\beta}(m_{\phi_*})}{\vv(m_{\phi_*})^2}\langle\xi^\alpha,\xi^\beta\rangle_{\phi_*}  
    	+\ell_{\ext}(m_{\phi_*})\frac{\ww(m_{\phi_*})}{\vv(m_{\phi_*})}
    	 \right)\omega_{\phi_*}^n.	
    \end{align*}
    We also have
    \begin{align*}
    &\int_Xu\left((1-\frac{1}{t_i})\frac{\tr_{\vv,\phi_i}(\theta)}{\vv(m_{\phi_i})}+\frac{\tr_{\vv,\phi_i}\Ric(\omega)}{\vv(m_{\phi_i})}\right)\omega_{\phi_i}^n \\
    =&\int_Xu(1-\frac{1}{t_i})\theta\wedge\omega_{\phi_i}^{n-1}
    +\int_Xu\Ric(\omega)\wedge\omega_{\phi_i}^{n-1}	\\
    &+\int_Xu\left((1-\frac{1}{t_i})\frac{\langle\d\vv(m_{\phi_i}),m_\theta\rangle}{\vv(m_{\phi})}-\frac{\vv_{,\alpha}(m_{\phi_i})}{\vv(m_{\phi_i})}\Delta_\omega m_\omega^{\xi^\alpha}\right)\omega_{\phi_i}^n \\
    &\longrightarrow \int_Xu\Ric(\omega)\wedge\omega_{\phi_*}^{n-1}
    -\int_Xu\frac{\vv_{,\alpha}(m_{\phi_*})}{\vv(m_{\phi_*})}\Delta_\omega m_\omega^{\xi^\alpha}\omega_{\phi_*}^n
    \end{align*}
    and
    \begin{align*}
    	\int_X\log\vv(m_{\phi_i})\ddbar u\wedge\omega_{\phi_i}^{n-1}
    	\>
    	\int_X\log\vv(m_{\phi_*})\ddbar u\wedge\omega_{\phi_*}^{n-1}.
    \end{align*}
    Thus, we obtain \eqref{2nd equ of limit func}.
\end{proof}

\begin{lemma}
\label{C^2 of omega phi_*}
	There exists a constant $C>0$ such that $C^{-1}\omega\leq\omega_{\phi_*}\leq C\omega$.
\end{lemma}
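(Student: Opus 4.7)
The plan is to obtain pointwise $C^2$-control of $\phi_*$ by applying the a priori $C^2$-estimate from the Appendix (Theorem \ref{C^2+gradient of F}) to the limit solution. The crucial point is that at $t=1$ the degenerating terms involving $f_*$ and $\theta$ drop out of the continuity path, so $\phi_*$ solves an honest weighted-cscK equation, not a twisted one.

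First I collect the data available at the limit: by \eqref{W^2,p convergence of phi} and \eqref{W^1,p convergence} we have $\phi_* \in W^{2,p}$ and $F_* \in W^{1,p}$ for every $p<\infty$ (hence both are H\"older continuous); the bounds $\|\phi_*\|_{C^0}, \|F_*\|_{C^0} \leq C$ pass to the limit from Theorem \ref{C^0-estimate}; the Monge--Amp\`ere equation $\omega_{\phi_*}^n = e^{F_*}\omega^n$ holds strongly; and the weak form \eqref{2nd equ of limit function F_*} encodes a linear elliptic equation for $F_*$ whose coefficients depend on $\phi_*$. The gradient estimate of Theorem \ref{gradient of F}, which bounds $|\nabla^{\phi_i}(\tilde F_i+f_i-\log\vv(m_{\phi_i}))|_{\phi_i}$ uniformly in $i$, passes to the limit (as a bound on the weak-$L^p$ limit of the gradient) and, after the regularity step below, will be interpreted as a pointwise bound on $|\nabla^{\phi_*} F_*|_{\phi_*}$.

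The proof then proceeds in two stages. The regularity bootstrap comes first: viewing \eqref{2nd equ of limit function F_*} as
\[
\Delta_{\phi_*} F_* \;=\; \text{(first-order in } F_*) + \text{(bounded)},
\]
with $C^{\alpha}$ coefficients coming from $\phi_* \in W^{2,p} \hookrightarrow C^{1,\alpha}$, the $L^p$-theory and Schauder estimates give $F_* \in C^{1,\alpha}$; then Caffarelli's interior regularity for the Monge--Amp\`ere equation $\omega_{\phi_*}^n = e^{F_*}\omega^n$ (with H\"older RHS) upgrades $\phi_*$ to $C^{2,\alpha}$, and iterating gives the regularity needed to run the apriori arguments. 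With this in place, one applies the pointwise $C^2$-estimate from the Appendix to $\phi_*$; its inputs -- $\|\phi_*\|_{C^0}$, $\|F_*\|_{C^0}$, $\|\nabla^{\phi_*}(F_* - \log\vv(m_{\phi_*}))\|_{\phi_*}$, and the geometric data $\omega, \vv, \ww, \Ric(\omega)$ -- are all finite, yielding $\tr\omega_{\phi_*} \leq C$, that is $\omega_{\phi_*} \leq C\omega$. The lower bound $\omega_{\phi_*} \geq C^{-1}\omega$ follows from $\omega_{\phi_*}^n = e^{F_*}\omega^n$ with $F_*$ bounded, via the arithmetic--geometric mean inequality applied to the eigenvalues of $\omega_{\phi_*}$ with respect to $\omega$.

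The main obstacle is the regularity bootstrap: one must show that \eqref{2nd equ of limit function F_*} can be treated as a uniformly elliptic linear equation for $F_*$ before any pointwise $C^2$-bound on $\phi_*$ is known. This is where the $W^{2,p}$-estimate is essential -- it provides uniform ellipticity in the $L^p$-sense, which, combined with the $W^{1,p}$-regularity of $F_*$ and the fact that the weight $\vv_{,\alpha}/\vv$ and $J\xi^\alpha$-terms have $C^\infty$ dependence on $m_{\phi_*}$ (hence $C^{1,\alpha}$ as a function of $x$), suffices to run standard linear elliptic theory and close the bootstrap.
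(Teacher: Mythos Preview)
Your bootstrap is circular. You claim that the weak equation \eqref{2nd equ of limit function F_*} for $F_*$ has ``$C^{\alpha}$ coefficients coming from $\phi_* \in W^{2,p} \hookrightarrow C^{1,\alpha}$'', but the coefficients of $\Delta_{\phi_*}$ are the entries of $g_{\phi_*}^{i\bar j}$, which are rational functions of the \emph{second} derivatives of $\phi_*$; these are only in $L^p$, not $C^\alpha$. Worse, uniform ellipticity of $\Delta_{\phi_*}$ is precisely the two-sided bound $C^{-1}\omega\leq\omega_{\phi_*}\leq C\omega$ you are trying to prove, so neither Schauder nor $L^p$ elliptic theory applies to the second equation at this stage. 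The fallback of invoking Theorem~\ref{C^2+gradient of F} directly on $\phi_*$ has the same defect: that theorem is an \emph{a priori} estimate whose proof (Moser iteration, integration by parts of third-order terms) requires a smooth solution, and you cannot pass it to the limit from the $\phi_i$ because along the twisted path only the $W^{2,p}$ estimate of Theorem~\ref{integral C^2} is available, not a pointwise $C^2$ bound.

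The paper avoids this circularity by ignoring the second equation entirely and using only $\omega_{\phi_*}^n=e^{F_*}\omega^n$ together with $F_*\in W^{1,p}$. One approximates $F_*$ by smooth uniformly bounded $G_k\to F_*$ in $W^{1,p}$, solves $\omega_{\psi_k}^n=e^{G_k}\omega^n$ by Yau, and invokes the Chen--He regularity theorem \cite[Theorem 1.1]{CH12}, which gives a uniform $W^{3,p}$ bound on $\psi_k$ depending only on $\|G_k\|_{W^{1,p}}$. Passing to a $W^{2,p}$-convergent subsequence and using uniqueness of the Monge--Amp\`ere equation yields $\phi_*=\psi_*+c\in W^{3,p}\hookrightarrow C^{2,\alpha}$, whence $\omega_{\phi_*}\leq C\omega$; the lower bound then follows from $F_*\geq -C$. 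The key point you missed is that the $W^{1,p}$ regularity of $F_*$ already suffices, via \cite{CH12}, to get $C^{2,\alpha}$ regularity of $\phi_*$ from the Monge--Amp\`ere equation alone.
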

\begin{proof}
	We have obtained that $F_*\in W^{1,p}$ for any $p<\infty$, then we may find a sequence $G_k\in C^\infty(X)$, uniformly bounded, and $G_k\>F_*$ in $W^{1,p}$.
	Let $\psi_k$ be the solution of $\omega_{\psi_k}^n=e^{G_k}\omega^n$ with $\sup_X\psi_k=0$.
	By \cite[Theorem 1.1]{CH12}, for any $p<\infty$, one has
	\begin{align*}
		\sup_k\|\psi_k\|_{W^{3,p}}<\infty.
	\end{align*}
	Up to a subsequence, we have that there exists $\psi_*\in W^{3,p}$ such that $\psi_k\>\psi_*$ in $W^{2,p}$ for any finite $p$.
	It follows that $\omega_{\psi_*}^n=e^{F_*}\omega^n$.
	By uniqueness of Monge-Amp\`ere equations, we obtain that $\phi_*=\psi_*+c$ for some constant $c$.
	Hence $\omega_{\phi_*}=\omega_{\psi_*}\leq C\omega$ and $\omega_{\phi_*}=\omega_{\psi_*}\geq C^{-1}\omega$ follows from $F_*$ is bounded from below. 
\end{proof}

\begin{cor}
	$\phi_*$ is the smooth solution to the weighted cscK equation \eqref{weighted-csck}.
\end{cor}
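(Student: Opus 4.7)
The plan is a standard elliptic bootstrap starting from the $C^{1,1}$-type information of Lemma \ref{C^2 of omega phi_*} together with the weak second equation \eqref{2nd equ of limit function F_*}. Since Lemma \ref{C^2 of omega phi_*} gives $C^{-1}\omega\le\omega_{\phi_*}\le C\omega$, the operator $\Delta_{\phi_*}$ is uniformly elliptic on $X$, and since $\phi_*\in W^{2,p}$ for every $p<\infty$, its coefficients lie in $C^{0,\alpha}$ for every $\alpha\in(0,1)$.

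My first step is to recast \eqref{2nd equ of limit function F_*} as a linear elliptic equation for $G_*:=F_*+\log\vv(m_{\phi_*})$. Transferring the $\log\vv(m_{\phi_*})\,\ddbar u\wedge\omega_{\phi_*}^{n-1}$ term to the left, using the identity $\ddbar u\wedge\omega_{\phi_*}^{n-1}=\tfrac{1}{n}(\Delta_{\phi_*}u)\,\omega_{\phi_*}^n$, and rewriting $J\xi^\alpha F_* = J\xi^\alpha G_* - J\xi^\alpha\log\vv(m_{\phi_*})$, the weak identity becomes the distributional equation
\begin{align*}
\Delta_{\phi_*}G_* - \frac{\vv_{,\alpha}(m_{\phi_*})}{\vv(m_{\phi_*})}J\xi^\alpha G_* = h,
\end{align*}
where $h$ is a bounded measurable function built from the smooth data $\vv,\ww,\ell_{\ext},\Ric(\omega)$ and $m_{\phi_*}$. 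By standard Calder\'on--Zygmund $W^{2,p}$ theory applied to this uniformly elliptic operator with $C^{0,\alpha}$ coefficients and bounded right-hand side, I expect $G_*\in W^{2,p}$ for every $p<\infty$, hence $G_*\in C^{1,\alpha}$ and therefore $F_*\in C^{1,\alpha}$.

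Next, I will bootstrap on the Monge--Amp\`ere equation $\omega_{\phi_*}^n=e^{F_*}\omega^n$. With $F_*\in C^{1,\alpha}$ and the two-sided bound on $\omega_{\phi_*}$, Caffarelli's interior Schauder estimates upgrade $\phi_*$ to $C^{3,\alpha}$; re-inserting this improved regularity into the linear equation for $G_*$ (whose coefficients and right-hand side are now $C^{1,\alpha}$) pushes $F_*$ to $C^{2,\alpha}$ by Schauder theory, which in turn gives $\phi_*\in C^{4,\alpha}$, and so on. Iterating this pair yields $\phi_*,F_*\in C^\infty$. Once everything is classical, the distributional identities become pointwise PDEs, and the coupled system they form is exactly what one obtains by letting $t_i\to 1$ in \eqref{couple system1}--\eqref{couple system2}: the $(1-1/t_i)$-terms vanish, leaving the equations equivalent to the weighted cscK equation \eqref{weighted-csck}. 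The main subtlety I anticipate is in the first step, where the first-order term $\tfrac{\vv_{,\alpha}}{\vv}J\xi^\alpha G_*$ and the coupling to $\log\vv(m_{\phi_*})$ through $G_*$ must be handled simultaneously; but since both involve at most one fewer derivative than the leading operator at each stage, the bootstrap loop closes without difficulty.
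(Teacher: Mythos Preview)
Your overall strategy---bootstrap by alternating between the linear second equation and the Monge--Amp\`ere equation---is exactly the paper's, but your first step contains a genuine error. You claim that ``since $\phi_*\in W^{2,p}$ for every $p<\infty$, its coefficients lie in $C^{0,\alpha}$.'' This is false: the coefficients of $\Delta_{\phi_*}$ are built from $g_{\phi_*}^{i\bar j}$, which depend on the \emph{second} derivatives of $\phi_*$. From $\phi_*\in W^{2,p}$ you only get $\partial\bar\partial\phi_*\in L^p$, so the coefficients are merely bounded measurable, not $C^{0,\alpha}$. With only $L^\infty$ coefficients in non-divergence form you cannot invoke Calder\'on--Zygmund $W^{2,p}$ theory, and your bootstrap does not start.

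The fix is already contained in the proof of Lemma~\ref{C^2 of omega phi_*}, which you cite only for the two-sided metric bound: that proof actually establishes the stronger conclusion $\phi_*\in W^{3,p}$ for every $p<\infty$ (via the Chen--He estimate applied to $\omega_{\phi_*}^n=e^{F_*}\omega^n$ with $F_*\in W^{1,p}$). With $\phi_*\in W^{3,p}$ you do get $\omega_{\phi_*}\in C^{0,\alpha}$, and then the second equation has $C^{0,\alpha}$ coefficients and $C^{0,\alpha}$ right-hand side (since terms like $\tr_{\phi_*}\Ric(\omega)$ and $\langle\xi^\alpha,\xi^\beta\rangle_{\phi_*}$ are now H\"older). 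At this point the paper applies Schauder directly to obtain $F_*\in C^{2,\alpha}$, then quotes the bootstrap of Corollary~\ref{bootstrapping}; your route through $W^{2,p}$ for $G_*$ and then Evans--Krylov/Caffarelli on the Monge--Amp\`ere equation would also work, but only after you have secured the $W^{3,p}$ starting regularity.
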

\begin{proof}
	In the proof of Lemma \ref{C^2 of omega phi_*}, we have obtain that $\phi_*\in W^{3,p}$ for any $p<\infty$.
	Hence $\omega_{\phi_*}\in C^\alpha$ for any $0<\alpha<1$.
	By \eqref{2nd equ of limit function F_*} and Schauder estimate, we derive $F_*\in C^{2,\alpha}$ for any $0<\alpha<1$. 
	Then the higher regularity follows from bootstrap (see Corollary \ref{bootstrapping}). 
\end{proof}

\section{Appendix}
\label{Appendix}

\subsection{Apriori estimates for weighted cscK equation}
In this subsection, we provide the apriori estimates required to show that the weighted-cscK metric is smooth.

We consider the weighted cscK equations \eqref{couple system1 weighted cscK intro}, \eqref{couple system2 weighted cscK intro}.
Applying Theorem \ref{integral C^2} with  $f_*=0$ induces 
\begin{prop}
\label{integral C^2 for weighted-cscK}
	Assume $\vv$ is log-concave. Let $\phi$ be a solution of \eqref{couple system1 weighted cscK intro}, \eqref{couple system2 weighted cscK intro}, for any $p\geq1$, then
	\begin{align*}
		\|\tr\omega_\phi\|_{L^p(\omega^n)}\leq C=C(n, p, \vv, \ww, \ell_\ext, A_1, B, C_0),
	\end{align*}
	where $A_1$, $B$, $C_0$ are constant
    \begin{align*}
		\mathrm{Bisec}(\omega)\geq -B,\quad 
		\Ric(\omega)\leq A_1\omega, \quad
		\| F\|_{L^\infty}+\|\phi\|_{L^\infty}\leq C_0.
	\end{align*}
    
\end{prop}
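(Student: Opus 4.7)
The strategy is to specialize Theorem \ref{integral C^2} to the present setting. Observe that setting $t=1$ and $f_*=0$ in the modified continuity path system \eqref{csck-mt} recovers the weighted-cscK equations \eqref{couple system1 weighted cscK intro}, \eqref{couple system2 weighted cscK intro}: the factor $(1-\tfrac{1}{t})$ vanishes, so $\eta_t$ reduces to $\Ric(\omega)$, while $\tilde{f}_t$ collapses to the bounded expression $-\ell_{\ext}\tfrac{\ww}{\vv}-\tfrac{\vv_{,\alpha}}{\vv}\Delta_\omega m_\omega^{\xi^\alpha}$, and all terms involving $f_*$ and $\xi^\alpha(f_*)$ in the second equation disappear. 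Hence any smooth solution $\phi$ of \eqref{couple system1 weighted cscK intro}, \eqref{couple system2 weighted cscK intro} is automatically a solution of \eqref{csck-mt} with the specialized data.

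With this identification in hand, the plan is to verify the hypotheses of Theorem \ref{integral C^2}. The log-concavity of $\vv$ is assumed; the bound $\Ric(\omega)\le A_1\omega$ provides the upper bound on $\eta_t=\Ric(\omega)$ required when controlling $\tr_\phi\eta_t$ in inequality \eqref{Lap_phi(tilde F+bphi)}; the curvature bound $\mathrm{Bisec}(\omega)\ge -B$ is precisely the input needed for Lemma \ref{Lap_phi log tr}; and $\|F\|_{L^\infty}+\|\phi\|_{L^\infty}\le C_0$ supplies the required $C^0$-bounds (note that with $f_*=0$, the integrability hypothesis $\int_X e^{-pf_*}\omega^n<C(p)$ is trivially satisfied for every $p$, and $\|F+f_*\|_{C^0}=\|F\|_{C^0}$). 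Applying Theorem \ref{integral C^2} together with the remark immediately following it then yields $\int_X(n+\Delta\phi)^p\,\omega^n\le C$ with $C=C(n,p,\vv,\ww,\ell_{\ext},A_1,B,C_0)$, and since $\tr\omega_\phi=n+\Delta\phi$, this is exactly the claimed estimate. There is no substantive obstacle, since all the real work — the delicate test-function and integration-by-parts argument exploiting log-concavity — was carried out in the proof of Theorem \ref{integral C^2}; the present proposition is a direct specialization.
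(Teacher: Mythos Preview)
Your proposal is correct and follows the same approach as the paper: the paper's own proof consists of the single sentence ``Applying Theorem \ref{integral C^2} with $f_*=0$ induces'' the proposition, and you have carried out exactly this specialization while spelling out explicitly why each hypothesis of Theorem \ref{integral C^2} is met under the stated assumptions on $A_1$, $B$, $C_0$.
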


Adapting the similar argument in the proof of Theorem \ref{gradient of F}, we obtain the gradient estimate of $F$ and $C^2$-estimate of $\phi$ for equations \eqref{couple system1 weighted cscK intro}, \eqref{couple system2 weighted cscK intro}.
\begin{thm}
\label{C^2+gradient of F}
	Assume $\vv$ is log-concave.
	Let $\phi$ be a solution of \eqref{couple system1 weighted cscK intro}, \eqref{couple system2 weighted cscK intro},  then
	\begin{align*}
		\max_X(|\nabla^\phi F|_\phi+\tr\omega_\phi)\leq C,
	\end{align*}
	where $C$ is a constant depending on $\omega$, $\vv$, $\ww$, $\|\phi\|_{C^0}$, $\|F\|_{C^0}$, $\|\tr\omega_\phi\|_{L^p}$ for some $p$.
\end{thm}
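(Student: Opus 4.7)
The plan is to specialize the estimates from Section \ref{Apriori estimates} to the case $f_{*}\equiv 0$ (that is, to the actual weighted-cscK equation \eqref{couple system1 weighted cscK intro}, \eqref{couple system2 weighted cscK intro} rather than the continuity path), and then upgrade the integral $C^{2}$ bound to a pointwise one via maximum principle. Concretely, Proposition \ref{integral C^2 for weighted-cscK} already gives $\|\tr\omega_{\phi}\|_{L^{p}(\omega^{n})}\leq C$ for every finite $p$, so only the pointwise gradient estimate on $F$ and the pointwise bound on $\tr\omega_{\phi}$ remain.

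For the gradient bound on $F$, I would repeat verbatim the Bochner/Moser-iteration argument of Theorem \ref{gradient of F} with $f_{*}\equiv 0$. Setting $W:=F-\log\vv(m_{\phi})$, the test function is
\[
u \;=\; e^{W/2 - \log\vv(m_{\phi})} |\nabla^{\phi} W|_{\phi}^{2} + 1.
\]
The computation in \eqref{Lap for the new barrier modified}–\eqref{Laplace u modified} goes through unchanged; integrating $\Delta_{\phi}(u^{2p+1})$ against the weighted volume $e^{2\log\vv(m_{\phi})}\omega_{\phi}^{n}$ and applying integration by parts twice (as in \eqref{integ of u^{2p+1} modified}–\eqref{4th estimate of integ by parts modified}) absorbs the awkward first-order term $\tfrac{\vv_{,\alpha}}{\vv}J\xi^{\alpha}(W)$, producing the iteration inequality
\[
\int_{X} p\,u^{2p-1} |\nabla^{\phi} u|_{\phi}^{2}\, e^{2\log\vv(m_{\phi})}\,\omega_{\phi}^{n} \;\leq\; C(p+1) \int_{X} (\tr\omega_{\phi})^{3(n-1)} u^{2p+1}\, e^{2\log\vv(m_{\phi})}\,\omega_{\phi}^{n}.
\]
Moser iteration together with the $L^{p}$ bound on $\tr\omega_{\phi}$ from Proposition \ref{integral C^2 for weighted-cscK} then yields $\|u\|_{L^{\infty}}\leq C$, and combining with $|\nabla^{\phi}\log\vv(m_{\phi})|_{\phi}^{2}\leq C\tr\omega_{\phi}$ will give the pointwise bound on $|\nabla^{\phi} F|_{\phi}$ once $\tr\omega_{\phi}$ is pointwise controlled.

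For the pointwise $C^{2}$ bound on $\phi$, I would apply a maximum-principle argument to a quantity of the form
\[
H \;=\; \log\tr\omega_{\phi} \;-\; A\bigl(F+\log\vv(m_{\phi})\bigr) \;-\; b\phi
\]
with $A,b$ sufficiently large. Lemma \ref{Lap_phi log tr} lower-bounds $\Delta_{\phi}\log\tr\omega_{\phi}$, the identity \eqref{Lap of F+logv(m_phi)} (specialized to $f_{*}\equiv 0$, $\eta_{t}=\Ric(\omega)$) controls $\Delta_{\phi}(F+\log\vv)$, the Hessian cross-term $\tfrac{\vv_{,\alpha\beta}}{\vv}\langle\xi^{\alpha},\xi^{\beta}\rangle_{\phi}$ is bounded via \eqref{upper bound of xi pair}, and $\Delta_{\phi}(\log\vv(m_{\phi}))$ is handled by \eqref{Lap of logv(m_phi) and Jxi(F')}. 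At an interior maximum point the term $b\,\tr_{\phi}\omega$ dominates, and the Cauchy–Schwarz/arithmetic–geometric inequality $\tr\omega_{\phi}\leq \tfrac{1}{(n-1)!}(\tr_{\phi}\omega)^{n-1}\tfrac{\omega_{\phi}^{n}}{\omega^{n}}$ yields an a priori pointwise bound on $\tr\omega_{\phi}$ at that point, hence globally.

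The main obstacle is the same one encountered throughout Section \ref{Apriori estimates}: the third-order term $\tfrac{\vv_{,\alpha}}{\vv}J\xi^{\alpha}(F)$ in the linearized equation for $F$, which is absent from the classical Chen--Cheng setup. The essential device for handling it is already present in the proof of Theorem \ref{gradient of F}: the combined use of the test-function weight $e^{-\log\vv(m_{\phi})}$ and the weighted measure $e^{2\log\vv(m_{\phi})}\omega_{\phi}^{n}$, which produces the cancellation $-2\nabla^{\phi}\log\vv\cdot_{\phi}\nabla^{\phi}(\cdots)$ needed to absorb the bad first-order term via the estimate $|J\xi^{\alpha}(W)|\leq C(\tr\omega_{\phi})^{1/2}|\nabla^{\phi}W|_{\phi}$ from \eqref{upperbound of xi(W)}. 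Once this trick is in place, the remainder of the argument is standard.
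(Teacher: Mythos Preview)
Your plan for the gradient estimate is fine and matches Theorem~\ref{gradient of F} with $f_*\equiv 0$; the issue is the second half, where you propose a stand-alone maximum principle on
\[
H=\log\tr\omega_\phi - A\bigl(F+\log\vv(m_\phi)\bigr)-b\phi.
\]
If you apply Lemma~\ref{Lap_phi log tr}, you are left with the term $\dfrac{\Delta(F-\log\vv(m_\phi))}{\tr\omega_\phi}$ on the right-hand side, where $\Delta$ is the Laplacian of the \emph{fixed} metric $\omega$. Nothing you have at this stage bounds $\Delta F'$ from below: the $C^0$ and $|\nabla^\phi F'|_\phi$ bounds do not control second derivatives of $F'$ with respect to $\omega$, and the identities \eqref{Lap of F+logv(m_phi)}, \eqref{Lap of logv(m_phi) and Jxi(F')} that you cite only compute $\Delta_\phi$-Laplacians, not $\Delta$-Laplacians. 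So at the putative maximum of $H$ the inequality you would need simply does not close, and the argument stalls exactly here. (The $\xi$-pair terms can indeed be handled via log-concavity, as you say; the obstruction is purely the $\Delta F'$ term.)

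The paper avoids this by \emph{not} separating the two estimates. It puts $\tr\omega_\phi$ into the same test function,
\[
u=e^{\frac{F'}{2}-\log\vv(m_\phi)}|\nabla^\phi F'|_\phi^2+\tr\omega_\phi+1,
\]
and runs a single Moser iteration. The reason this works is a coupling: the Chen--Cheng inequality $\Delta_\phi\tr\omega_\phi\ge -C(\tr\omega_\phi)^n+\dfrac{|\phi_{p\bar q i}|^2}{(1+\phi_{p\bar p})(1+\phi_{q\bar q})}+\Delta F'-R(g)$ produces $\Delta F'$ with no denominator, and Young's inequality then absorbs $|\Delta F'|$ into the Hessian term $\dfrac{|F'_{i\bar j}|^2}{(1+\phi_{i\bar i})(1+\phi_{j\bar j})}$ coming from the Bochner computation for $|\nabla^\phi F'|_\phi^2$. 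Thus the gradient part of $u$ supplies exactly the positive second-order term that the $\tr\omega_\phi$ part needs; decoupling them destroys this cancellation. Once you make this change, the rest of your write-up (weighted measure $e^{2\log\vv}\omega_\phi^n$, integration by parts to kill the $\tfrac{\vv_{,\alpha}}{\vv}J\xi^\alpha$ terms, Moser iteration against $\|\tr\omega_\phi\|_{L^p}$) goes through and yields both bounds simultaneously.
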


\begin{proof}
Comparing with the setting in the proof of Theorem \ref{gradient of F}, $f_*=0$.
Applying $f_*=0$ in \eqref{Lap for the new barrier modified}, we have
	\begin{align}
		&\Delta_\phi(e^{-\log\vv(m_\phi)} \cdot e^{\frac{F'}{2}}|\nabla^\phi F'|_\phi^2)
		 \nonumber \\
		 \geq&e^{\frac{F'}{2}-\log\vv(m_\phi)}|\nabla^\phi F'|_\phi^2|\nabla^\phi\log\vv(m_\phi)|_\phi^2
		 -2e^{-\log\vv(m_\phi)}\nabla^\phi\log\vv(m_\phi)\cdot_\phi\nabla^\phi(e^{\frac{F'}{2}}|\nabla^\phi F'|_\phi^2) \nonumber \\
		 &+e^{\frac{F'}{2}-\log\vv(m_\phi)}2\nabla^\phi F'\cdot_\phi\nabla^\phi\Delta_\phi F' 
         -C_{\vv,\xi}(1+(\tr\omega_\phi)^{n-1})e^{\frac{F'}{2}-\log\vv(m_\phi)}|\nabla^\phi F'|_\phi^2 \nonumber \\
		 &+\frac{|F'_{i\bar j}|^2}{(1+\phi_{i\bar i})(1+\phi_{j\bar j})}e^{\frac{F'}{2}-\log\vv(m_\phi)} 
		 +2\frac{\vv_{,\alpha}(m_\phi)}{\vv(m_\phi)}J\xi^\alpha(F')e^{\frac{F'}{2}-\log\vv(m_\phi)}|\nabla^\phi F'|_\phi^2.
		 \label{Lap for the new barrier}
	\end{align}
	
	By \cite[formula (3.6)]{CC21a},
	\begin{align*}
		\Delta_\phi\tr\omega_\phi
		\geq-C(\tr\omega_\phi)^n +\frac{\sum_i|\phi_{p\bar q i}|^2}{(1+\phi_{p\bar p})(1+\phi_{q\bar q})} +\Delta F'-R(g).
	\end{align*}
	By Young inequality,
	\begin{align*}
		|\Delta F'|
		\leq&\frac{\delta}{2}\frac{|F'_{i\bar i}|^2}{(1+\phi_{i\bar i})^2}+(2\delta)^{-1}(1+\phi_{i\bar i})^2  \\
		\leq&\frac{\delta}{2}\frac{|F'_{i\bar i}|^2}{(1+\phi_{i\bar i})^2}+n(2\delta)^{-1}(\tr\omega_\phi)^2.
	\end{align*}
	
	Note that, by the arithmetic-geometric inequality, we have the lower bound
	\begin{align*}
		\tr\omega_\phi
		\geq ne^{\frac{F'}{n}}
		\geq ne^{-\frac{\|F'\|_{C_0}}{n}}>0.
	\end{align*} 
	Then there exists a constant $C$ such that $1\leq C(\tr\omega_\phi)^{n-1}$.
	Now, by taking a suitable constant $\delta$ and \eqref{Lap for the new barrier}, one has
	\begin{align*}
		&\Delta_\phi(e^{-\log\vv(m_\phi)} \cdot e^{\frac{F'}{2}}|\nabla^\phi F'|_\phi^2 +\tr\omega_\phi) \\
		\geq&e^{\frac{F'}{2}-\log\vv(m_\phi)}|\nabla^\phi F'|_\phi^2|\nabla^\phi\log\vv(m_\phi)|_\phi^2
		 -2e^{-\log\vv(m_\phi)}\nabla^\phi\log\vv(m_\phi)\cdot_\phi\nabla^\phi(e^{\frac{F'}{2}}|\nabla^\phi F'|_\phi^2) \\
		 &+e^{\frac{F'}{2}-\log\vv(m_\phi)}2\nabla^\phi F'\cdot_\phi\nabla^\phi\Delta_\phi F' 
		 -C_{\vv,\xi}(\tr\omega_\phi)^{n-1}(e^{\frac{F'}{2}-\log\vv(m_\phi)}|\nabla^\phi F'|_\phi^2+\tr\omega_\phi) \\
		 &		 +2\frac{\vv_{,\alpha}(m_\phi)}{\vv(m_\phi)}J\xi^\alpha(F')(e^{\frac{F'}{2}-\log\vv(m_\phi)}|\nabla^\phi F'|_\phi^2+\tr\omega_\phi)
		 -2\frac{\vv_{,\alpha}(m_\phi)}{\vv(m_\phi)}J\xi^\alpha(F')\tr\omega_\phi.
	\end{align*}
	Set
	\begin{align*}
		u=e^{\frac{F'}{2}-\log\vv(m_\phi)}|\nabla^\phi F'|_\phi^2+\tr\omega_\phi+1,
	\end{align*}
	then we have
	\begin{align}
		\Delta_\phi u
		\geq&-C_{\vv,\xi}(\tr\omega_\phi)^{n-1}u +2e^{\frac{F'}{2}-\log\vv(m_\phi)}\nabla^\phi F'\cdot_\phi\nabla^\phi\Delta_\phi F' +2\frac{\vv_{,\alpha}(m_\phi)}{\vv(m_\phi)}J\xi^\alpha(F')u \nonumber \\
		&+e^{\frac{F'}{2}-\log\vv(m_\phi)}|\nabla^\phi F'|_\phi^2|\nabla^\phi\log\vv(m_\phi)|_\phi^2
		 -2e^{-\log\vv(m_\phi)}\nabla^\phi\log\vv(m_\phi)\cdot_\phi\nabla^\phi(e^{\frac{F'}{2}}|\nabla^\phi F'|_\phi^2).
		 \label{Laplace u}
	\end{align}
	For any $p>0$, 
	\begin{align*}
		\frac{1}{2p+1}\Delta_\phi(u^{2p+1})
		=u^{2p}\Delta_\phi u+2pu^{2p-1}|\nabla^\phi u|_\phi^2.
	\end{align*}
	Taking the integration, one has
	\begin{align}
	\label{integ formula initial}
		&\int_X2pu^{2p-1}|\nabla^\phi u|_\phi^2e^{2\log\vv(m_\phi)}\omega_\phi^n \nonumber \\
		=&\int_Xu^{2p}(-\Delta_\phi u)e^{2\log\vv(m_\phi)}\omega_\phi^n
		+\frac{1}{2p+1}\int_X\Delta_\phi(u^{2p+1})e^{2\log\vv(m_\phi)}\omega_\phi^n. 
	\end{align}
	On the one hand, by \eqref{integ of u^{2p+1} modified},
	\begin{align}
	\label{integ of u^{2p+1}}
		\frac{1}{2p+1}\int_X\Delta_\phi(u^{2p+1})e^{2\log\vv(m_\phi)}\omega_\phi^n 
        =&-2\int_Xu^{2p}\nabla^\phi(e^{\frac{F'}{2}}|\nabla^\phi F'|_\phi^2)\cdot_\phi\nabla^\phi\log\vv(m_\phi)e^{\log\vv(m_\phi)}\omega_\phi^n \nonumber \\
		&+2\int_Xu^{2p}e^{\frac{F'}{2}}|\nabla^\phi F'|_\phi^2|\nabla^\phi\log\vv(m_\phi)|_\phi^2e^{\log\vv(m_\phi)}\omega_\phi^n \nonumber \\
    &-2\int_Xu^{2p}\nabla^\phi\tr\omega_\phi\cdot_\phi\nabla^\phi\log\vv(m_\phi)e^{2\log\vv(m_\phi)}\omega_\phi^n. 
	\end{align}
	We estimate the third term on the right hand side of \eqref{integ of u^{2p+1}}
	\begin{align}
	\label{3rd term on integ of Lap u^{2p+1}}
		&-2\int_Xu^{2p}\nabla^\phi\tr\omega_\phi\cdot_\phi\nabla^\phi\log\vv(m_\phi)e^{2\log\vv(m_\phi)}\omega_\phi^n \nonumber \\
		=&2\int_X2pu^{2p-1}\tr\omega_\phi\nabla^\phi u\cdot_\phi\nabla^\phi\log\vv(m_\phi)e^{2\log\vv(m_\phi)}\omega_\phi^n \nonumber \\
		&+2\int_Xu^{2p}\tr\omega_\phi\Delta_\phi\log\vv(m_\phi)e^{2\log\vv(m_\phi)}\omega_\phi^n 
		+4\int_Xu^{2p}\tr\omega_\phi|\nabla^\phi\log\vv(m_\phi)|_\phi^2e^{2\log\vv(m_\phi)}\omega_\phi^n \nonumber \\
		\leq&\frac{p}{2}\int_Xu^{2p-1}|\nabla^\phi u|_\phi^2e^{2\log\vv(m_\phi)}\omega_\phi^n
		+\int_X8pu^{2p-1}|\nabla^\phi\log\vv(m_\phi)|_\phi^2(\tr\omega_\phi)^2e^{2\log\vv(m_\phi)}\omega_\phi^n \nonumber \\
		&+2\int_Xu^{2p}\tr\omega_\phi\left(-\frac{\vv_{,\alpha}(m_\phi)}{\vv(m_\phi)}J\xi^\alpha(F') +\frac{\vv_{,\alpha}(m_\phi)}{\vv(m_\phi)}\Delta_\omega(m_\omega^{\xi^\alpha})\right.   \nonumber \\
	    &\left.+\frac{\vv_{,\alpha\beta}(m_\phi)}{\vv(m_\phi)}\langle\xi^\alpha,\xi^\beta\rangle_\phi
	     -\frac{\vv_{,\alpha}(m_\phi)\vv_{,\beta}(m_\phi)}{\vv(m_\phi)^2}\langle\xi^\alpha,\xi^\beta\rangle_\phi\right)e^{2\log\vv(m_\phi)}\omega_\phi^n \nonumber \\
		&+4\int_Xu^{2p}\tr\omega_\phi|\nabla^\phi\log\vv(m_\phi)|_\phi^2e^{2\log\vv(m_\phi)}\omega_\phi^n. \nonumber \\
		\leq&\frac{p}{2}\int_Xu^{2p-1}|\nabla^\phi u|_\phi^2e^{2\log\vv(m_\phi)}\omega_\phi^n +C_{\vv,\xi}(p+1)\int_Xu^{2p+1}(\tr\omega_\phi)^{2n} e^{2\log\vv(m_\phi)}\omega_\phi^n, 
	\end{align}
	where we have used \eqref{Lap of logv(m_phi) and Jxi(F')} for the second inequality, \eqref{upper bound of xi pair},
	\begin{align*}
            |J\xi^\alpha(F')|
            \leq C|\nabla F'|
            \leq C(\tr\omega_\phi)^{\frac{1}{2}}|\nabla^\phi F'|
            \leq C(\tr\omega_\phi)^{\frac{1}{2}}C\sqrt{u} 
         \leq C(\tr\omega_\phi)^{\frac{1}{2}}u
         \leq C(\tr\omega_\phi)^{n-1}u,
        \end{align*}
    and \eqref{gradient of logv(m_phi)} for the third inequality.

	On the other hand, by \eqref{Laplace u},
	\begin{align}
		&\int_Xu^{2p}(-\Delta_\phi u)e^{2\log\vv(m_\phi)}\omega_\phi^n \nonumber \\
		=&C\int_Xu^{2p+1}(\tr\omega_\phi)^{n-1}e^{2\log\vv(m_\phi)}\omega_\phi^n
		-2\int_Xu^{2p+1}\frac{\vv_{,\alpha}(m_\phi)}{\vv(m_\phi)}J\xi^\alpha(F')e^{2\log\vv(m_\phi)}\omega_\phi^n \nonumber \\
		&-\int_Xu^{2p} e^{\frac{F'}{2}}|\nabla^\phi F'|_\phi^2|\nabla^\phi\log\vv(m_\phi)|_\phi^2e^{\log\vv(m_\phi)}\omega_\phi^n  \nonumber \\
		 &+2\int_Xu^{2p} \nabla^\phi\log\vv(m_\phi)\cdot_\phi\nabla^\phi(e^{\frac{F'}{2}}|\nabla^\phi F'|_\phi^2)e^{\log\vv(m_\phi)}\omega_\phi^n \nonumber \\
		 &-2\int_Xu^{2p} e^{\frac{F'}{2}}\nabla^\phi F'\cdot_\phi\nabla^\phi\Delta_\phi F' e^{\log\vv(m_\phi)}\omega_\phi^n.
		 \label{integ of Laplace u}
	\end{align}
    Applying the same computation as in \eqref{integ by part in CC21 modified}, \eqref{1st estimate of integ by part modified}, \eqref{2nd estimate of integ by parts modified}, \eqref{3rd estimate of integ by parts modified} and \eqref{4th estimate of integ by parts modified} implies
	\begin{align}
    \label{double-Laplace}
		&-2\int_Xu^{2p} e^{\frac{F'}{2}}\nabla^\phi F'\cdot_\phi\nabla^\phi\Delta_\phi F' e^{\log\vv(m_\phi)}\omega_\phi^n \nonumber \\
		\leq& \int_Xpu^{2p-1}|\nabla^\phi u|_\phi^2e^{2\log\vv(m_\phi)}\omega_\phi^n 
		+C(p+1)\int_X(\tr\omega_\phi)^{3(n-1)}u^{2p+1}e^{\log\vv(m_\phi)}\omega_\phi^n \nonumber \\
		&+2\int_Xu^{2p+1}\frac{\vv_{,\alpha}(m_\phi)}{\vv(m_\phi)}J\xi^\alpha(F')e^{2\log\vv(m_\phi)}\omega_\phi^n
		+\int_Xu^{2p+1}e^{2\log\vv(m_\phi)}\omega_\phi^n
	\end{align}
	Therefore, by \eqref{integ formula initial}, \eqref{integ of u^{2p+1}}, \eqref{3rd term on integ of Lap u^{2p+1}}, \eqref{integ of Laplace u} together with the above inequality, we deduce
	\begin{align*}
		\int_X p u^{2p-1}|\nabla^\phi u|_\phi^2e^{2\log\vv(m_\phi)}\omega_\phi^n 
		\leq
		C(p+1)\int_X(\tr\omega_\phi)^{3(n-1)}u^{2p+1}e^{\log\vv(m_\phi)}\omega_\phi^n.
	\end{align*}
	It follows that
	\begin{align*}
		\int_X|\nabla^\phi(u^{p+\frac{1}{2}})|_\phi^2\omega^n
		\leq\frac{C_{\vv,\xi}(p+\frac{1}{2})^2(p+1)}{p}\int_XC_\xi(\tr\omega_\phi)^{3(n-1)}u^{2p+1}\omega_\phi^n.
	\end{align*}
    Then applying the standard Moser iteration steps as in the proof of Theorem \ref{gradient of F} and $f_*=0$ in Lemma \ref{L^2 bound of gradient W squre modified}, we have $\|u\|_{C^0} \leq C$, which implies that $|\nabla^\phi F'|_\phi^2+\tr\omega_\phi\leq C$.

    Note that 
    \begin{align*}
        |\nabla^\phi F|_\phi^2
        \leq|\nabla^\phi F'|_\phi^2+|\nabla^\phi\log\vv(m_\phi)|_\phi^2
        \leq C.
    \end{align*}
    We finish the proof.
\end{proof}

Once we have the $C^2$-estimate, then, by standard bootstraping argument, we obtain the higher order estimates.
\begin{cor}
\label{bootstrapping}
	Suppose $C^{-1}\omega\leq\omega_\phi\leq C\omega$ for some constant $C>0$, then all higher derivatives can be estimated in terms of $C$.
\end{cor}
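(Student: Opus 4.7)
The plan is a standard two-equation elliptic bootstrap using \eqref{couple system1 weighted cscK intro}--\eqref{couple system2 weighted cscK intro} alternately. The assumption $C^{-1}\omega \le \omega_\phi \le C\omega$ gives $\phi\in W^{2,\infty}$ and, crucially, makes the complex Monge--Amp\`ere equation \eqref{couple system1 weighted cscK intro} uniformly elliptic. Moreover each moment component $m_\phi^{\xi^\alpha}=m_\omega^{\xi^\alpha}+\mathrm{d}^c\phi(\xi^\alpha)$ lies in $L^\infty$, so $F=\log(\vv(m_\phi)\omega_\phi^n/\omega^n)$ is already uniformly bounded.

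The first move I would make is to upgrade $F$ from $L^\infty$ to H\"older. In \eqref{couple system2 weighted cscK intro} the operator
\[
L_\phi \;:=\; \Delta_\phi \;-\; \frac{\vv_{,\alpha}(m_\phi)}{\vv(m_\phi)}\,J\xi^\alpha
\]
is a uniformly elliptic second-order operator with bounded measurable coefficients (principal part $g_\phi^{j\bar k}$, drift coefficient smooth in $m_\phi$), and its right-hand side depends on $m_\phi$ and $\omega_\phi$ only through bounded quantities, hence is in $L^\infty$. The Krylov--Safonov $C^\alpha$ estimate for non-divergence form equations then gives $F\in C^\alpha$ for some $\alpha\in(0,1)$.

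With $F\in C^\alpha$ in hand, I would next run one round of nonlinear Monge--Amp\`ere regularity. Rewriting \eqref{couple system1 weighted cscK intro} as $\omega_\phi^n=(e^F/\vv(m_\phi))\,\omega^n$, the right-hand side is $C^\alpha$, since $\nabla\phi$ is Lipschitz and $\vv$ is smooth on $P$. Under the assumed uniform ellipticity, the complex Evans--Krylov theorem (equivalently, Caffarelli's interior $C^{2,\alpha}$ regularity for the complex Monge--Amp\`ere equation, adapted to the K\"ahler setting) upgrades $\phi$ to $C^{2,\alpha}$. Once this is done the remaining argument is purely linear: assuming inductively $\phi\in C^{k,\alpha}$ and $F\in C^{k-2,\alpha}$, the coefficients of $L_\phi$ and the right-hand side of \eqref{couple system2 weighted cscK intro} lie in $C^{k-2,\alpha}$, so Schauder promotes $F$ to $C^{k,\alpha}$; plugging this back into \eqref{couple system1 weighted cscK intro} the right-hand side becomes $C^{k,\alpha}$, and Schauder for the linearization of the Monge--Amp\`ere operator (whose leading part is the uniformly elliptic $\Delta_\phi$ acting on $\phi$) yields $\phi\in C^{k+2,\alpha}$. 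Iterating gives $\phi\in C^\infty$ with all norms quantitatively controlled in terms of $C$.

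The one non-routine step is the first $C^{2,\alpha}$ bound for $\phi$, where we jump from merely $L^\infty$ Hessian bounds and $C^\alpha$ right-hand side to $C^{2,\alpha}$ regularity of the potential; this is exactly where the complex Evans--Krylov/Caffarelli input is indispensable, and it is where I would expect the main subtlety to lie. Every subsequent step is mechanical linear Schauder, so once that foothold is secured the induction runs without difficulty.
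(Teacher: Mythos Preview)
Your proposal is correct and follows essentially the same route as the paper: Krylov--Safonov on \eqref{couple system2 weighted cscK intro} to get $F\in C^\alpha$, Evans--Krylov on \eqref{couple system1 weighted cscK intro} to get $\phi\in C^{2,\alpha}$, then an alternating linear bootstrap. The paper is slightly more explicit in writing out the differentiated Monge--Amp\`ere equation and mixes in $L^p$ (Calder\'on--Zygmund) estimates at one stage, whereas you run a pure Schauder induction; both are standard and lead to the same conclusion.
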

\begin{proof}
	By the assumption, then  \eqref{couple system2} is uniformly elliptic with bounded right hand side.
	It follows that $\|F\|_{C^\alpha}\leq C_1$ where $\alpha$ and $C_1$ depend on $C$.
	Then going back to \eqref{couple system1}, we conclude that $\|\phi\|_{C^{2,\alpha'}}\leq C_2$ for $\alpha'<\alpha$, thanks to Evans-Krylov estimate.
	Hence the coefficients of the elliptic operator of \eqref{couple system2} are in $C^{\alpha'}$.
	By \cite[Theorem 8.32]{GT77}, we may obtain $\|F\|_{C^{1,\alpha'}}\leq C_3$.
	
	By differentiating both sides of \eqref{couple system1}, we obtain the linear elliptic equation with $C^{\alpha'}$ coefficient
	\begin{align}
	\label{diff of couple system1}
		\Delta_\phi(\p_k\phi)-\frac{\vv_{,\alpha}(m_\phi)}{\vv(m_\phi)}\omega_\phi(\xi^\alpha,\p_k)
		=\p_kF-g_\phi^{i\bar j}(\p_kg_{i\bar j})+g^{i\bar j}(\p_kg_{i\bar j}).
	\end{align}
	By Schauder estimate, we deduce $\|\phi\|_{C^{3,\alpha'}}\leq C_4$. 
	Going back to \eqref{couple system2} again, the coefficient are in $C^\gamma$ for any $0<\gamma<1$, hence we have $\|F\|_{C^{1,\gamma}}\leq C_5$.
	Now, the coefficients of \eqref{diff of couple system1} are in $C^\gamma$ for any $0<\gamma<1$. 
	Thus we derive $\|\phi\|_{C^{3,\gamma}}\leq C_6$ for any $0<\gamma<1$.
	
	Now the second equation \eqref{couple system2} has $C^{1,\gamma}$ coefficients and bounded right hand side, the classical $L^p$ estimate (\cite[Theorem 9.11]{GT77}) implies $F\in W^{2,p}$ for any finite $p$.
	Differentiating the equation \eqref{diff of couple system1}, we obtain a linear elliptic equation in terms of the second derivatives of $\phi$, which has $C^{2,\gamma}$ coefficients and $L^p$ right hand side.
	Thus $\phi\in W^{4,p}$.
	The bootstrapping argument concludes the higher order estimates of $\phi$ and $F$.
\end{proof}

\subsection{Linearization of operators}
\label{Linearization of opeartors}
In this subsection, we present the details of the computation for the linearization of operators required in Subsection \ref{Openness}. 
The computation basically follows \cite{Has19, Lah19}.

Let $\cR = \frac{\rm Scal_\vv}{\ww} - \ell_\ext$, $\cT = \frac{\tr_{\vv,\phi}\theta}{\ww} = \frac{\vv\cdot \tr_\phi\theta + \langle d\vv, m_\theta \rangle}{\ww}$. Locally we can write $\theta = \ddc \psi$. 
Then $m_\theta^\alpha = -J\xi^\alpha(\psi)$, and $\langle \d\vv, m_\theta \rangle = -\sum_\alpha \vv_{,\alpha} J\xi^\alpha(\psi)$.
Let $\cF_0 = \cR - R \cT$ for some $R>0$ large enough.

Then
\begin{align*}
     \frac{d}{dt}|_{t=0}\cF_0 (\dot{\phi}) 
     &= \frac{-\cD^*(\vv \cD \dot{\phi})}{\ww} + \langle \partial\cR, \bar\partial\dot{\phi} \rangle_\phi - R\frac{d}{dt}\cT \\
    &= \frac{-\cD^*(\vv \cD \dot{\phi})}{\ww} + \langle \partial\cF, \bar\partial\dot{\phi} \rangle_\phi 
    -R\Big( \frac{d}{dt}\cT - \langle \partial\cT, \bar\partial\dot{\phi} \rangle_\phi \Big) .
\end{align*}

By direct computation,
\begin{align*}
    &-\frac{d}{dt}\cT = \frac{\cT}{\ww}\langle \partial\ww, \bar\partial\dot{\phi} \rangle_\phi - \frac{1}{\ww}\frac{d}{dt}\Big( \vv\cdot \tr_\phi\theta - \sum_\alpha \vv_{,\alpha} J\xi^\alpha(\psi) \Big), \\
    &-\frac{1}{\ww}\frac{d}{dt}\Big( \vv\cdot \tr_\phi\theta - \sum_\alpha \vv_{,\alpha} J\xi^\alpha(\psi) \Big) \\
    =& \frac{1}{\ww}\Big( -\langle \partial\vv, \bar\partial\dot{\phi} \rangle_\phi \cdot \tr_\phi\theta + \vv \langle \theta, \ddbar\dot{\phi} \rangle_\phi + \sum_\alpha \langle \partial \vv_{,\alpha}, \bar\partial\dot{\phi} \rangle_\phi J\xi^\alpha(\psi) \Big).
\end{align*}
Then 
\begin{align*}
    -\frac{d}{dt}\cT = \frac{\cT}{\ww}\langle\partial\ww,\bar\partial\dot{\phi}\rangle_\phi + \frac{1}{\ww}\Big( -\langle\partial\vv,\bar\partial\dot{\phi}\rangle_\phi\cdot \tr_\phi\theta + \vv\langle \theta,\ddbar\dot{\phi} \rangle_\phi + \sum_\alpha \langle \partial\vv_{,\alpha},\bar\partial\dot{\phi}\rangle_\phi J\xi^\alpha(\psi) \Big).
\end{align*}
And 
\begin{align*}
    \langle \partial\cT, \bar\partial\dot{\phi} \rangle_\phi = \frac{-\cT}{\ww}\langle \partial\ww,\bar\partial\dot{\phi}\rangle_\phi + \frac{\tr_\phi\theta}{\ww}\langle\partial\vv,\bar\partial\dot{\phi} \rangle_\phi + \frac{\vv}{\ww}\langle \partial(\tr_\phi\theta),\bar\partial\dot{\phi} \rangle_\phi - \frac{1}{\ww}\langle \partial \big(\sum_\alpha \vv_{,\alpha} J\xi^\alpha(\psi)\big), \bar\partial\dot{\phi} \rangle_\phi.
\end{align*}

Thus
\begin{align*}
    \langle \partial\cT,\bar\partial\dot{\phi} \rangle_\phi - \frac{d}{dt}\cT
    = \frac{\vv}{\ww}\langle \theta, \ddbar\dot{\phi} \rangle_\phi + \frac{\vv}{\ww}\langle \partial(\tr_\phi\theta),\bar\partial\dot{\phi} \rangle_\phi -  \frac{1}{\ww} \sum_\alpha \vv_{,\alpha}\langle \partial\big(J\xi^\alpha(\psi)\big), \bar\partial\dot{\phi} \rangle_\phi.
\end{align*}

Let $f$ be a test function. Then
\begin{align*}
    & \int_X \big( \langle \partial\cT,\bar\partial\dot{\phi} \rangle_\phi - \frac{d}{dt}\cT \big) \cdot f \cdot \ww\frac{\omega_\phi^n}{n!} \\
    =& \int_X \vv \langle \theta, \ddbar\dot{\phi} \rangle_\phi \cdot f \cdot \frac{\omega_\phi^n}{n!}
    + \int_X \vv \cdot \langle \partial (\tr_\phi\theta), \bar\partial\dot{\phi} \rangle_\phi \cdot f \cdot \frac{\omega_\phi^n}{n!} \\
    &- \int_X \sum_\alpha \vv_{,\alpha} \langle \partial\big( J\xi^\alpha(\psi) \big), \bar\partial\dot{\phi} \rangle_\phi \cdot f \cdot \frac{\omega_\phi^n}{n!},
\end{align*}
where $-\sum_\alpha \vv_{,\alpha} \langle \partial\big(J\xi^\alpha(\psi) \big), \bar\partial\dot{\phi} \rangle_\phi = \langle \theta, \partial\vv \wedge \bar\partial\dot{\phi} \rangle$.

For two $(1,1)$-forms $\alpha, \beta$, we have the formula 
$$n(n-1)\alpha\wedge \beta \wedge \omega_\phi^{n-2} = \big( \tr_\phi\alpha \cdot \tr_\phi\beta - \langle \alpha, \beta \rangle_\phi \big) \omega_\phi^n. $$
Apply this formula, we have
\begin{align*}
    & \int_X \vv \langle \theta, \ddbar\dot{\phi} \rangle_\phi \cdot f \cdot \frac{\omega_\phi^n}{n!} \\
    &= \int \vv \cdot \tr_\phi\theta \cdot \Delta_\phi(\dot{\phi}) \cdot f \frac{\omega_\phi^n}{n!} - \int_X \vv\cdot f \cdot \theta\wedge\ddbar\dot{\phi} \wedge \frac{\omega_\phi^{n-2}}{(n-2)!},
\end{align*}
Among which,
\begin{align*}
&  \int \vv \cdot \tr_\phi\theta \cdot \Delta_\phi(\dot{\phi}) \cdot f \frac{\omega_\phi^n}{n!} 
= \int_X \vv\cdot\tr_\phi\theta \cdot f \cdot \ddbar\dot{\phi} \wedge \frac{\omega_\phi^{n-1}}{(n-1)!} \\
=& -\int_X \vv\cdot \tr_\phi\theta \cdot \partial f \wedge \bar\partial \dot{\phi} \wedge \frac{\omega_\phi^{n-1}}{(n-1)!} - \int_X \tr_\phi\theta \cdot f \cdot \partial\vv \wedge \bar\partial\dot{\phi} \wedge \frac{\omega_\phi^{n-1}}{(n-1)!} \\
&- \int_X \vv \cdot f \cdot \partial(\tr_\phi\theta)\wedge \bar\partial\dot{\phi} \wedge \frac{\omega_\phi^{n-1}}{(n-1)!},
\end{align*}

and we also have
\begin{align*}
    & -\int_X \vv \cdot f \cdot (\theta\wedge \ddbar\dot{\phi}) \wedge \frac{\omega_\phi^{n-2}}{(n-2)!}\\
    =& \int_X \vv \cdot \partial f \wedge \bar\partial\dot{\phi} \wedge \theta \wedge \frac{\omega_\phi^{n-2}}{(n-2)!}
    +\int_X f\cdot (\partial\vv \wedge \bar\partial\dot{\phi}) \wedge\theta\wedge\frac{\omega_\phi^{n-2}}{(n-2)!}\\
    =& \int_X \vv \cdot \partial f \wedge \bar\partial\dot{\phi} \wedge \theta \wedge \frac{\omega_\phi^{n-2}}{(n-2)!} 
    + \int_X f \cdot \tr_\phi(\partial \vv \wedge \bar\partial\dot{\phi}) \cdot \tr_\phi \theta \frac{\omega_\phi^n}{n!} - \int_X f \langle \partial \vv \wedge \bar\partial\dot{\phi}, \theta \rangle_\phi \frac{\omega_\phi^n}{n!}.
\end{align*}

Therefore
\begin{align*}
    & \int_X \Big( \langle \partial\cT, \bar\partial\dot{\phi} \rangle_\phi - \frac{d}{dt} \cT \Big) \cdot f \cdot \ww \cdot \frac{\omega_\phi^n}{n!} \\
    =& \int_X -\vv \cdot \tr_\phi\theta \cdot \tr_\phi (\partial f \wedge \bar\partial\dot{\phi}) \frac{\omega_\phi^n}{n!} + \int_X \vv \cdot \partial f \wedge \bar\partial\dot{\phi} \wedge \theta \wedge \frac{\omega_\phi^{n-2}}{(n-2)!} \\
    =& -\int_X \vv \cdot \langle \theta, \partial f\wedge \bar\partial\dot{\phi} \rangle \frac{\omega_\phi^n}{n!}.
\end{align*}

By the computation above, we can see that 
\begin{lemma}
\label{self-adjoint}
The linearized operator
    $\frac{d}{dt} \cT - \langle \partial\cT, \bar\partial\cdot \rangle_\phi$
is a $(\cdot,\cdot)_{\ww,\phi}$- self-adjoint second order elliptic linear operator. In particular, if $\theta$ is positive, then the operator has negative first eigenvalue for $R>0$.
\end{lemma}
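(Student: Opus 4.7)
The plan is to observe that the lengthy calculation immediately preceding the lemma has essentially already done the work: it reduced the matrix element $(L\dot\phi,f)_{\ww,\phi}$, where $L := \frac{d}{dt}\cT - \langle\partial\cT,\bar\partial\cdot\rangle_\phi$, to the bilinear form
\[
	B(f,\dot\phi) := -\int_X \vv \cdot \langle \theta, \partial f \wedge \bar\partial\dot\phi \rangle_\phi \frac{\omega_\phi^n}{n!}.
\]
So the remaining task is twofold: (i) show that $B$ is symmetric in $(f,\dot\phi)$, which gives self-adjointness, and (ii) identify the principal symbol of $L$ to verify second-order ellipticity.

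For (i), the cleanest route is to pass to the real $d$-$d^c$ pairing used in the main text. Writing $d^c u = \sqrt{-1}(\bar\partial u - \partial u)$, a direct expansion gives
\[
	df \wedge d^c \dot\phi = \sqrt{-1}\bigl(\partial f \wedge \bar\partial\dot\phi + \partial\dot\phi \wedge \bar\partial f\bigr) - \sqrt{-1}\,\partial f \wedge \partial \dot\phi + \sqrt{-1}\,\bar\partial f \wedge \bar\partial\dot\phi,
\]
whose $(1,1)$-type piece, the only piece that $\theta$ can see because $\theta$ is itself a $(1,1)$-form, is manifestly symmetric in $(f,\dot\phi)$. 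The paper's integration-by-parts argument can equally well be run with $df\wedge d^c\dot\phi$ (which carries the same $(1,1)$-content as $\partial f \wedge \bar\partial\dot\phi$ once paired against $\theta$) in place of $\partial f \wedge \bar\partial\dot\phi$, producing (up to a factor of $\sqrt{-1}$) the bilinear form $\int_X \vv\langle\theta, df \wedge d^c \dot\phi\rangle_\phi\, \omega_\phi^n/n!$. Symmetry in $(f,\dot\phi)$ is then automatic from the above expansion, yielding $(L\dot\phi, f)_{\ww,\phi} = (\dot\phi, Lf)_{\ww,\phi}$, which is the desired self-adjointness.

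For (ii), the only term producing second derivatives of $\dot\phi$ in $L\dot\phi$ comes from the variation of $\tr_\phi\theta$ inside $\frac{d}{dt}\cT$: a direct computation gives
\[
	\frac{d}{dt}\bigl(\tr_\phi\theta\bigr)(\dot\phi) = -g_\phi^{j\bar l}\,\dot\phi_{l\bar m}\,g_\phi^{m\bar k}\,\theta_{j\bar k} = -\langle\theta, \ddbar\dot\phi\rangle_\phi.
\]
Every other piece of $\frac{d}{dt}\cT$ (coming from the $\dot\phi$-dependence of $\vv(m_\phi)$, $\ww(m_\phi)$, and $\sum_\alpha \vv_{,\alpha}J\xi^\alpha(\psi)$) involves $\dot\phi$ only through first derivatives, and the drift $\langle\partial\cT,\bar\partial\dot\phi\rangle_\phi$ is plainly first-order as well. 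Hence the principal symbol of $L$ at a covector $\xi$ is proportional to $\frac{\vv}{\ww}\theta(\xi,\bar\xi)$, viewing $\theta$ as a Hermitian form on $T^{1,0}X$, and this is non-degenerate because $\theta$ is a strictly positive $(1,1)$-form and $\vv,\ww > 0$. This confirms that $L$ is second-order elliptic.

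There is no genuine obstacle in the proof: the heavy lifting, a sequence of integrations-by-parts carefully tracking the four pieces making up $\cT$, was already carried out before the lemma statement. The only delicate point is that $\langle\theta, \partial f \wedge \bar\partial\dot\phi\rangle_\phi$ is not itself pointwise symmetric in $(f,\dot\phi)$, and one has to symmetrize (equivalently, pass to the real $d$-$d^c$ pairing, or use that the integrand's complex conjugate corresponds to swapping $f$ and $\dot\phi$) before the symmetry of the integrated bilinear form becomes manifest.
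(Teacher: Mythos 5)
Your proposal is correct and follows essentially the same route as the paper: the paper's ``proof'' of the lemma is precisely the integration-by-parts computation preceding it, which reduces the matrix element to $-\int_X \vv\,\langle\theta,\partial f\wedge\bar\partial\dot\phi\rangle_\phi\,\omega_\phi^n/n!$, and the symmetric $d$--$d^c$ form you pass to is exactly the expression $\int_X\langle\theta,\d u_1\wedge\d^c u_2\rangle_{\phi}\vv\,\omega_{\phi}^{n}$ the paper itself uses when invoking the lemma in the openness argument. You supply slightly more detail than the paper (the explicit symmetrization of the integrated bilinear form, which holds since the integral is real and conjugation swaps $f$ and $\dot\phi$, and the principal symbol $\frac{\vv}{\ww}\theta^{k\bar l}\zeta_k\bar\zeta_l$, non-degenerate since $\theta>0$), but these are the intended, and correct, completions of the same argument.
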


	\bibliographystyle{alpha}
	\bibliography{Weighted-cscK_Adv_v3}
	
\end{document}